\documentclass[a4paper,11pt,oneside]{amsart}
\usepackage{mathtools,amssymb,amsthm}
\usepackage[shortlabels]{enumitem}
\usepackage[english]{babel}
\usepackage[utf8]{inputenc}
\usepackage[colorlinks=true, citecolor=cyan, linkcolor=blue, urlcolor=cyan, filecolor=magenta]{hyperref}
\usepackage{setspace, bbm, graphicx}
\usepackage[numbers]{natbib}
\usepackage{geometry}
\setlist[enumerate]{itemsep=0mm}
\geometry{verbose,tmargin=3cm,bmargin=3cm,lmargin=3cm,rmargin=3cm,headsep=1.5cm,footskip=1.5cm}

\usepackage{tikz}
\usetikzlibrary{backgrounds}
\usetikzlibrary{arrows}
\usetikzlibrary{shapes,shapes.geometric,shapes.misc}

\tikzstyle{tikzfig}=[baseline=-0.25em,scale=0.5]

\pgfkeys{/tikz/tikzit fill/.initial=0}
\pgfkeys{/tikz/tikzit draw/.initial=0}
\pgfkeys{/tikz/tikzit shape/.initial=0}
\pgfkeys{/tikz/tikzit category/.initial=0}

\pgfdeclarelayer{edgelayer}
\pgfdeclarelayer{nodelayer}
\pgfsetlayers{background,edgelayer,nodelayer,main}

\tikzstyle{none}=[inner sep=0mm]

\newcommand{\tikzfig}[1]{%
{\tikzstyle{every picture}=[tikzfig]
\IfFileExists{#1.tikz}
  {\input{#1.tikz}}
  {%
    \IfFileExists{./figures/#1.tikz}
      {\input{./figures/#1.tikz}}
      {\tikz[baseline=-0.5em]{\node[draw=red,font=\color{red},fill=red!10!white] {\textit{#1}};}}%
  }}%
}

\newcommand{\ctikzfig}[1]{%
\begin{center}\rm
  \tikzfig{#1}
\end{center}}

\tikzstyle{every loop}=[]

\tikzstyle{gray}=[-, draw=black, fill={rgb,255: red,128; green,128; blue,128}]
\tikzstyle{red}=[-, draw=red]
\tikzstyle{gray_line}=[-, draw={rgb,255: red,170; green,170; blue,170}]
\tikzstyle{arrow}=[<-, draw=black]
\tikzstyle{light gray}=[-, draw=black, fill={rgb,255: red,191; green,191; blue,191}]


\newcommand{\RR}{\mathbb{R}}
\newcommand{\ZZ}{\mathbb{Z}}

\newcommand{\PP}{\mathbb{P}}
\newcommand{\EE}{\mathbb{E}}

\newcommand{\ee}{\varepsilon}


\newcommand{\abs}[1]{\left| #1 \right|}
\renewcommand*{\bf}[1]{\ifmmode\mathbf{#1}\else\textbf{#1}\fi}

\newcommand{\mc}[1]{\mathcal{#1}}

\newcommand{\inn}[1]{\langle #1 \rangle}
\newcommand{\td}[1]{{\widetilde{#1}}}


\newcommand{\one}{\mathbbm{1}}
\newcommand{\FF}{\mathcal{F}}

\newcommand{\XX}{\mathcal{X}}
\newcommand{\rect}[4]{\mathcal{R}^{#1}_{#2}({#3}, {#4})} 
\newcommand{\dect}[4]{\mathcal{D}^{#1}_{#2}({#3}, {#4})} 

\newcommand{\limb}{{\lim_{\beta \to \infty}}}
\newcommand{\tb}{\theta_\beta}
\newcommand{\capa}{\operatorname{cap}_\beta}
\DeclareMathOperator{\opt}{opt}


\newtheorem{theorem}{Theorem}[section]
\newtheorem{lemma}[theorem]{Lemma}

\newtheorem{proposition}[theorem]{Proposition}

\theoremstyle{definition}
\newtheorem*{remark}{Remark}
\newtheorem*{claim}{Claim}

\title[Metastability of the Three-state Potts Model]{Metastability of the Three-state Potts Model with Asymmetrical External Field}
\author{Jeonghyun Ahn}
\address{J. Ahn. Department of Mathematical Sciences, Seoul National University,
Republic of Korea.}
\email{ihenry@snu.ac.kr}

\begin{document}
\maketitle

\begin{abstract}
    In this paper, we explore the metastable behavior of the Glauber dynamics associated with the three-state Potts model with an asymmetrical external field at a low-temperature regime. The model exhibits three monochromatic configurations: a unique stable state and two metastable states with different stability levels. We investigate metastable transitions, which are transitions from each metastable state to the ground state, separately and verify that the two transitions exhibit different behaviors.

    For the metastable state with greater stability, we derive large deviation-type results for metastable transition time, both in terms of probability and expectation. On the other hand, a particularly intriguing phenomenon emerges when starting from the other metastable state: the process may fall into the deep valley of another metastable state with a low probability but remains trapped there for an exponentially long time. We identify specific conditions on the external field under which this rare event contributes to the mean hitting time. To this end, we conduct a detailed analysis of the energy landscape, revealing a sharp saddle configuration analogous to the Ising model.
\end{abstract}

\section{Introduction}

Metastability is a ubiquitous phenomenon observed when a physical system is close to a first-order phase transition. It is widely observed in stochastic systems, including small random perturbations of dynamical systems, condensing interacting particle systems, and spin systems at low temperatures. For a detailed explanation of the history of metastability, we refer to the comprehensive monographs \cite{bovier2015metastability, olivieri2005large}. In the analysis of this phenomenon, mathematical frameworks to describe the physical system have been established. Typically, a system is modeled by a mathematical function known as Hamiltonian $H$, and evolution is followed by specific rules described in \eqref{eq:CTMC} known as the Metropolis-type Glauber dynamics. In essence, the dynamic tends to reduce the Hamiltonian, while the reverse direction occurs with an exponentially low probability.

We investigate the metastable behavior of the \emph{Potts model}, a generalization of the well-known \emph{Ising model}. Starting from \cite{baxter1982critical, baxter1973potts}, its dynamics have been extensively studied. The mean-field version of the Potts model has been discussed in \cite{costeniuc2005complete, ellis1990limit, ellis1992limit, gandolfo2010limit}. Additionally, the Potts model with zero external field has been explored in \cite{bet2021critical,kim2021metastability, kim_2022, nardi2019tunneling}. Various other generalizations exist, such as degenerate external fields \cite{Bet_2022, bet2022metastability} and general interaction constants \cite{kim2022potts}.

In this paper, we focus on the three-state Potts model with an asymmetrical external field, defined on a two-dimensional lattice graph with a periodic boundary condition.

Recall that the Ising model on a finite graph $\Lambda = (V, E)$ is defined by a Hamiltonian function
\[
    H^{\mathrm{Ising}}(\sigma) = - J\sum_{(x, y) \in E} \one_{\{\sigma(x) = \sigma(y)\}} - h\sum_{x \in V} \one_{\{\sigma(x) = 1\}}
\]
where $\sigma \in \{1, -1\}^V$ is a spin configuration on $\Lambda$. The dynamics consist of two parameters, the \emph{interaction constant} $J > 0$ and the \emph{external field} $h > 0$. The model is said to be \emph{ferromagnetic} since the spins favor being aligned. 

The Potts model is a generalization of the Ising model to an arbitrary number of spins, denoted by $q$. There are various ways of generalization, depending on the form of Hamiltonian $H$. In this paper, we fix the interaction constant but assign an asymmetrical external field for each spin. Namely, define the Hamiltonian $H$ by
\begin{equation}
    \label{eq:hamiltonian}
    H(\sigma) = -J\sum_{(x, y) \in E} \one_{\{\sigma(x) = \sigma(y)\}} - \sum_{x \in V} h_{\sigma(x)}
\end{equation}
for $\sigma \in \{1, \ldots, q\}^V$. Each spin $i$ has a different value of external field denoted by $h_i$. This differentiates the energy of $q$ monochromatic states, making exactly one into the most stable state. Furthermore, we assume $q = 3$ to avoid a complex energy landscape.

\begin{figure}
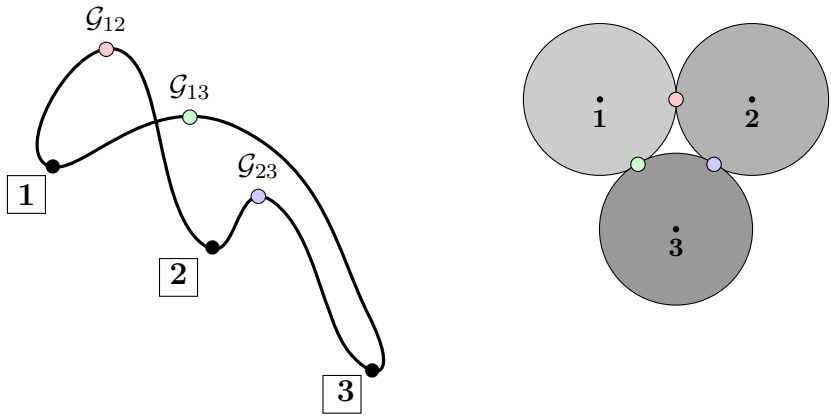

\centering
\ctikzfig{energy}
\caption{\label{fig:energy} Schematic picture of the energy landscape of the three-state Potts model with an asymmetrical external field. An Ising-type reference path connects each pair of monochromatic states.}
\end{figure}

Our goal is to investigate the metastable behavior of the three-state Potts model. Precisely, we show the following results:
\begin{itemize}[leftmargin=*]
    \item large deviation-type result, using the pathwise approach described in \cite{Nardi_2015},
    \item the Eyring--Kramers law for transition between monochromatic configurations, applying potential theory,
    \item the Markov chain model reduction, introduced in \cite{Beltr_n_2010}.
\end{itemize}

We start by describing the energy landscape. Denote by $\bf i$ the monochromatic configuration with spin $i$. Without loss of generality, let $h_3 > h_2 > h_1 > 0$.  In this scenario, we have a unique stable state \bf3, and two metastable states \bf1 and \bf2, however, with different stability levels. To see why they differ, it is helpful to examine a simplified model. For a while, let us only consider configurations with at most two types of spin. Then the model is a combination of Ising models between two different spins. Hence, the reference path of the original Ising model gives energy-minimizing paths between \bf1, \bf2, and \bf3. In fact, we show that these paths are still the energy-minimizing paths in our Potts model. Hence, the energy barrier of transitions $\bf1\to\bf3$ and $\bf2\to\bf3$ is $\Gamma^\ast(J, h_3 - h_1)$ and $\Gamma^\ast(J, h_3 - h_2)$, respectively. Here, $\Gamma^\ast(J, h)$ denotes the energy barrier of the Ising model with parameters $J$ and $h$ previously discovered in \cite{neves1991critical}. This quantity strictly decreases on $h \in (0, J)$, thus \bf2 has a larger stability level than \bf 1. Gathering these observations, the overall picture of the energy landscape is depicted in Figure \ref{fig:energy}. We also remark that the transition is \emph{sharp} in the sense that the transition goes through a single critical configuration.

Our next focus is the Eyring--Kramers law. Estimation of the mean transition time from \bf2 to \bf3 is straightforward; since \bf2 has the maximal stability, there is no chance for the process to be trapped in a valley other than \bf2 or \bf3. As soon as the process escapes the valley of \bf2 and passes the critical configuration, the process is likely to fall into \bf3. This metastable behavior enables the estimation of the transition time. A technical tool known as \emph{potential theory} aids this estimation. In brief, it suffices to estimate a capacity between \bf2 and \bf3, which readily follows from the sharp saddle configuration. In contrast, the transition from \bf1 to \bf3 has a different behavior since \bf1 has less stability than \bf2. In view of the typical path, the transition takes place away from \bf2. However, the process may fall into \bf2 with low probability, and it takes a long time to escape the deep valley of \bf2. This may contribute to the mean hitting time $\EE_{\bf1}^\beta[\tau_{\bf3}]$ of the transition $\bf1 \to \bf3$. We show in Theorem \ref{thm:EKinf} that if $2h_2 > h_1 + h_3$, then the mean hitting time is dominated by the transition $\bf1 \to \bf2 \to \bf3$, having a larger time scale than a typical transition.

Finally, we show the Markov chain model reduction. Since \bf1 and \bf2 have different depths of valley, say $\mc V_{\bf1}$ and $\mc V_{\bf2}$, we may speed up the Markov chain by two different time scales. When speeded-up by $e^{\beta\mc V_{\bf1}}$, only the transition $\bf1 \to \bf3$ is observed, and \bf2 remains isolated. When speeded-up by $e^{\beta\mc V_{\bf2}}$, there is a one way transition $\bf2 \to \bf3$, and the state \bf1 is not observed. Our model is one example in which multiple time scales appear in a Markov chain model reduction. See \cite{Bianchi_2017, kim2021second, kim2023hierarchical, landim2023metastability} for other examples.

\paragraph{Mathematical Obstacles}
The main obstacle lies in the analysis of the energy landscape, including the computation of communication heights and stability levels for various configurations. In particular, the situation becomes complex when all three spins are involved in the configuration. To resolve this issue, we employ the \emph{projection operator} denoted as $\mc P_{ij}$ on the configuration space, which is introduced in \cite{kim2022potts}. This operator $\mathcal{P}_{ij}$ transforms $i$ spins into $j$ spins, resulting in configurations with at most two spins. These reduced configurations can then be analyzed using well-established knowledge about the Ising model.

An additional obstacle arises when bounding the stability level of configurations other than \bf1, \bf2, and \bf3. A particularly complex scenario occurs when a cluster of spin 3 is surrounded by both spins 1 and 2. This appears in the proof of Theorem \ref{thm:stab2} in Subsection \ref{subsec:stab}, specifically in \textbf{(Case 2)} of the proof. To address this issue, we begin by finding configurations where the stability level can be bounded through local changes. For instance, in Figure \ref{fig:col2}, we can either update spins 1 and 2 to spins 3 along the edge or do the opposite, thereby reducing the overall energy. By eliminating such cases, we can narrow the scenario to the worst case, which is depicted in Figure \ref{fig:windmill}. To see how this specific case is managed, we refer to \textbf{(Case 2-2)} in the proof of Theorem \ref{thm:stab2}, which can be found in Subsection \ref{subsec:stab}.

We also highlight the use of Proposition \ref{prop:neg} which states the so-called \emph{negligibility} condition of the Markov chain model reduction discussed in \cite{Beltr_n_2010}. It is a common belief that we may ignore every shallow valley with a depth less than that of metastable states. However, to our best knowledge, the negligibility is typically proven independently for each model. We remark that Proposition \ref{prop:neg} gives the proof of negligibility in the Metropolis dynamics in the most general form. We also remark that this applies to models with multiple speed-up scales.

\paragraph{Relavent Works}
We end by introducing other models which have relevance to our setting. In \cite{Bet_2022,bet2022metastability}, the authors consider the degenerate Potts model, which is the case $h_1 \neq h_2 = h_3 = \cdots = h_q$ in our notation. In this setting, there are either $q-1$ metastable states and one stable state, or one metastable state and $q-1$ stable states. The transition from a metastable to a stable state is studied by classifying critical configurations and the tube of typical paths. In the model with $q-1$ stable states in \cite{Bet_2022}, a transition between stable states is also studied, which has an analogy to the zero external field case.

In \cite{Landim_2016}, the authors investigate the metastability of the two-dimensional Blume--Capel model. This model has two metastable states and a unique stable state. A notable behavior of this model is observed on a transition between two metastable states. With a low probability, the process deviates from a typical path and falls into a stable state for an exponentially long time. This rare event dominates the mean hitting time, which is exactly what happens in our model during a transition $\bf1 \to \bf3$ under the condition $2h_2 > h_1 + h_3$.

Finally, we refer to \cite{kim2022potts} which deals with a similar setting to this paper. It works on a three-state Potts model with zero external field and general interaction constants $J_{ij}$ between arbitrary two spins $i$ and $j$. Some analogous tools are used in this paper, such as the projection operator defined in Lemma \ref{lem:proj}.

\paragraph{Outline}
The paper is organized as follows. In Section 2 we define the three-state Potts model with an asymmetrical external field, then present our main results. In Section 3 we briefly recall the notations and preliminaries on the pathwise approach. We prove the main results related to the pathwise approach. Finally, in Section 4 we prove the remaining main results using potential theory. 

\section{Model Description and Main Results}
\label{sec:model}

Let $G = (V, E)$ be a $K \times L$ rectangular lattice graph with a periodic boundary condition. Precisely, the edge set includes the pair of vertices lying on the opposite side of the boundary. On each vertex, we assign one of the three spins 1, 2, and 3. Gathering all possible configurations, we have a state space denoted by $\mathcal{X} = \{1, 2, 3\}^V$. The Hamiltonian function was previously defined in \eqref{eq:hamiltonian}, but we assume $J = 1$ for simplicity. Writing again, the Hamiltonian function $H: \XX \to \RR$ is defined as
\[
    H(\sigma) = -\sum_{\{x,y\} \in E} \one_{\{\sigma(x) = \sigma(y)\}} - \sum_{x \in V} h_{\sigma(x)}.
\]
Here, $\sigma(x)$ denotes a spin of configuration $\sigma \in \XX$ at site $x \in V$. Without loss of generality, assume $h_3 > h_2 > h_1 > 0$. This gives asymmetric stability on each spin. Also, we assume $h_3 < 1$ so that the spin itself has less effect on the energy than the interaction between adjacent cells.

We impose two assumptions analogous to that of the Ising model, which appears in \cite{neves1991critical} as the \emph{standard case}.

\noindent
\textbf{Assumption A}. $K, L > \frac{3}{h_3 - h_2}, \frac{3}{h_2 - h_1}$.

\vspace{1mm}
\noindent
\textbf{Assumption B}. $a(h_2 - h_1) + b(h_3 - h_2) \notin \ZZ$ for $(a, b) \in \ZZ^2 \backslash \{(0, 0)\}$.

Assumption A is required to ensure the existence of a critical droplet. Assumption B corresponds to the condition $2 / h \notin \ZZ$ of the original Ising model, where $h$ is the external field of the Ising model.

We say that two states $\sigma, \eta \in \XX$ are adjacent if they differ by exactly one site. The dynamics is defined as a continuous time Markov chain $\{X_\beta(t)\}_{t\ge 0}$ on $\XX$ with a transition rate
\begin{equation}
    \label{eq:CTMC}
    c_\beta(\sigma, \eta) = \begin{cases}
        e^{{-\beta[H(\eta) - H(\sigma)]}_+} & \text{if }\sigma \sim \eta \\
        0 & \text{otherwise}
    \end{cases}
\end{equation}
where $[t]_+ = \max(t, 0)$ and the parameter $\beta > 0$ represents the inverse temperature. The Markov chain has a unique invariant measure called the \emph{Gibbs measure}, given as
\[
    \mu_\beta(\sigma) = \frac{1}{Z_\beta} e^{-\beta H(\sigma)}, \quad Z_\beta = \sum_{\sigma \in \XX} e^{-\beta H(\sigma)}.
\]
We see that
\[
    \mu_\beta(\sigma) c_\beta(\sigma, \eta) = \mu_\beta(\eta) c_\beta(\eta, \sigma) = \frac{1}{Z_\beta} e^{-\beta \max (H(\sigma), H(\eta))},
\]
so $\sigma_\beta(t)$ is a reversible Markov chain. Denote 
by $\PP^\beta_\sigma$ and $\EE^\beta_\sigma$ the law and the expectation, respectively, of the Markov chain $\{X_\beta(t)\}$ starting from $\sigma \in \XX$. We omit the subscript or superscript $\beta$ if it is clear from the context.

We define some notations that capture essential information from the energy landscape. First, a \emph{path} from $\sigma \in \XX$ to $\eta \in \XX$ is a sequence of states $\{\omega_t\}_{t=0}^T$ such that
\[
    \omega_0 = \sigma,\, \omega_T = \eta,\, c_\beta(\sigma_t, \sigma_{t+1}) > 0
\]
for all $0 \le t \le T - 1$. This is simply a possible way to move from $\sigma$ to $\eta$. 
Denote all possible paths from $\sigma$ to $\eta$ by $\Omega_{\sigma, \eta}$. For simplicity, we sometimes use the notation $\gamma: \tau \to \eta$ to indicate $\gamma \in \Omega_{\sigma, \eta}$. Define the \emph{communication height} between $\sigma, \eta \in \XX$ by
\[
    \Phi(\sigma, \eta) = \min_{(\omega_t)_{t=0}^T \in \Omega_{\sigma, \eta}}\max_{0 \le t \le T} H(\omega_t).
\]
Define the set of \emph{optimal path} between $\sigma, \eta \in \XX$ by the collection of paths attaining the minimal energy, that is,
\[
    \Omega_{\sigma, \eta}^{\opt} = \{\omega \in \Omega_{\sigma, \eta}: \, \sup_{\xi \in \omega} H(\xi) = \Phi(\sigma, \eta)\}.
\]
Accordingly, we define the communication height between disjoint sets $A, B \subseteq \XX$ by
\[
    \Phi(A, B) = \min_{\sigma \in A, \sigma' \in B} \Phi(\sigma, \sigma')
\]
and the set of optimal paths by
\[
    \Omega_{A,B}^{\opt} = \{\omega \in \Omega_{\sigma, \sigma'}^{\opt}: \, \sigma \in A, \, \sigma' \in B, \, \Phi(\sigma, \sigma') = \Phi(A, B) \}.
\]
Also, define the \emph{energy barrier} from $\sigma \in \XX$ to $A \subseteq \XX$ by
\[
    \Gamma(\sigma, A) = \Phi(\sigma, A) - H(\sigma).
\]
Next, define the \emph{stability level} $\mathcal{V}_\sigma$ of a state $\sigma \in \XX$ by
\[
    \mathcal{V}_\sigma \coloneqq \Gamma(\sigma, I_\sigma),
\]
where
\[
    I_\sigma \coloneqq \{\eta \in \XX: \, H(\eta) < H(\sigma)\}.
\]
Let $\mathcal{V}_\sigma = \infty$ if $I_\sigma = \emptyset$.

Finally, we define the \emph{minimal saddle} between $A, B \subseteq \XX$ by
\[
    \mc S(A, B) \coloneqq \{ \sigma \in \XX : \, \sigma \in \mathrm{argmax}_\omega H \text{ for some } \omega \in \Omega_{A, B}^{\opt} \}.
\]
A set $G \subseteq \XX$ is a gate between disjoint subsets $A, B \subseteq \XX$ if $G \subseteq S(A, B)$ and $\omega \cap G \neq \emptyset$ for all $\omega \in \Omega_{A, B}^{\opt}$. Moreover, we call a gate between $A$ and $B$ is \emph{minimal} if it is minimal with respect to set inclusion amongst all gates between $A$ and $B$. Denote by $\mc G(A, B)$ the union of all minimal gates between $A$ and $B$.

While computing the energy barriers in the model, we will use an auxiliary function $f: (0, 1) \to \RR$ defined by
\[
    f(h) \coloneqq 4\ell_c - h(\ell_c(\ell_c - 1) + 1), \quad \ell_c = \left\lceil \frac{2}{h} \right\rceil.
\]
Recall from \cite{neves1991critical} that $f(h)$ is the energy barrier of the Ising model with external field $h$, and $\ell_c$ appears in the size of the critical droplet. For $1 \le i < j \le 3$, denote
\begin{equation}
    \label{eq:crit}
    \Gamma_{ij}^\star = f(h_j - h_i), \quad \ell_c^{ij} = \left\lceil \frac{2}{h_j - h_i} \right\rceil
\end{equation}
and define $\Phi_{ij}^\star = \Gamma_{ij}^\star + H(\bf i)$ for $1 \le i \neq j \le 3$. Note that the quantities with star, such as $\Gamma_{ij}^\star$ and $\Phi_{ij}^\star$, are directly computable from the given parameters $h_i$'s, and they may differ from $\Gamma(\bf i, \bf j)$ and $\Phi(\bf i, \bf j)$, respectively.

\subsection{Results on energy landscape}
\label{subsec:1}
We start by giving an overall picture of the dynamics. Let \bf1, \bf2, and \bf3 denote the configuration with constant spin 1, 2, and 3, respectively. The dynamics exhibit three deep valleys, each having \bf1, \bf2, and \bf3 as their bottom. Clearly, \bf3 is the most stable state. In terms of stability levels, the subsequent most stable state is \bf2, followed by \bf1. Furthermore, there exists an Ising-type reference path between each state. When these are combined, it results in the illustration shown in Figure \ref{fig:energy}.

In this subsection, we justify Figure \ref{fig:energy}.

\begin{theorem}[Energy barrier]
    \label{thm:gamma}
    The communication height between \bf1, \bf2, \bf3 is given by
    \[
        \Gamma(\bf1,\bf3) = \Gamma(\bf1, \bf2) = \Gamma_{13}^\star, \quad \Gamma(\bf2, \bf3) = \Gamma_{23}^\star.
    \]
\end{theorem}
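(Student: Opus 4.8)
The plan is to prove matching upper and lower bounds for each of the three communication heights, using the projection operator $\mc P_{ij}$ from \cite{kim2022potts} to reduce the problem to the known Ising estimates encoded in $f$.

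For the upper bounds I would exhibit explicit Ising-type reference paths. For $\Gamma(\bf1,\bf3)$, take the standard Ising nucleation path from $\neves1991critical$ that grows a droplet of spin $3$ inside the sea of spin $1$ (a quasi-square that eventually fills the torus); along this path only spins $1$ and $3$ appear, so the Hamiltonian restricted to these configurations is exactly an Ising Hamiltonian with field $h_3-h_1$, and its maximum along the path is $H(\bf1)+f(h_3-h_1)=H(\bf1)+\Gamma_{13}^\star$. This gives $\Phi(\bf1,\bf3)\le H(\bf1)+\Gamma_{13}^\star$, i.e. $\Gamma(\bf1,\bf3)\le\Gamma_{13}^\star$. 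The same argument with spins $1,2$ and field $h_2-h_1$ gives $\Gamma(\bf1,\bf2)\le\Gamma_{12}^\star$, but since $h_2-h_1<h_3-h_1<1$ and $f$ is decreasing on $(0,1)$ (as recalled from \cite{neves1991critical}), we have $\Gamma_{12}^\star>\Gamma_{13}^\star$, so this particular path is not optimal; instead, for the claimed value $\Gamma(\bf1,\bf2)=\Gamma_{13}^\star$ I would route $\bf1\to\bf3\to\bf2$, observing that the $\bf3\to\bf2$ leg is downhill in the relevant sense after the reduction, so the max along the concatenated path is still $H(\bf1)+\Gamma_{13}^\star$. For $\Gamma(\bf2,\bf3)$ the direct spin-$\{2,3\}$ Ising path with field $h_3-h_2$ gives the bound $\Gamma_{23}^\star$.

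For the lower bounds I would apply the projection operator. The key property of $\mc P_{ij}$ (Lemma \ref{lem:proj}, referenced in the excerpt) is that it does not increase the Hamiltonian along admissible moves and maps paths to paths, so projecting an optimal path from $\bf i$ to $\bf j$ onto a two-spin subspace cannot raise its communication height; combined with the fact that on a two-spin subspace the Hamiltonian is an Ising Hamiltonian whose communication height between the two monochromatic states is exactly $f$ of the corresponding field difference, this yields $\Phi(\bf i,\bf j)\ge H(\bf i)+f(h_j-h_i)$ type bounds. For $\bf1\to\bf3$: any path must at some point pass through a configuration which, after suitable projections collapsing the spectator spin, looks like an Ising saddle for the field $h_3-h_1$, giving $\Gamma(\bf1,\bf3)\ge\Gamma_{13}^\star$. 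For $\bf1\to\bf2$: the subtle point is that $\Gamma_{13}^\star<\Gamma_{12}^\star$, so the naive projection onto $\{1,2\}$ gives only the weaker bound; instead I would argue that any path from $\bf1$ to $\bf2$ must either nucleate a spin-$3$ droplet (costing $\Gamma_{13}^\star$ by the $\{1,3\}$-projection of the initial segment) or reach $\bf2$ directly within the $\{1,2\}$-subspace (costing $\Gamma_{12}^\star>\Gamma_{13}^\star$), and in both cases the cost is at least $\Gamma_{13}^\star$. For $\bf2\to\bf3$, project onto $\{2,3\}$ to get $\Gamma_{23}^\star$.

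The main obstacle I anticipate is the lower bound $\Gamma(\bf1,\bf2)\ge\Gamma_{13}^\star$, because it is genuinely a statement about three-spin configurations: one must show that the cheapest way from $\bf1$ to $\bf2$ is to first create and grow a droplet of the \emph{stable} spin $3$ (which lowers energy once it is large) and only then convert to $\bf2$, rather than going directly. Making this rigorous requires controlling configurations where all three spins coexist, and this is exactly where the projection operator $\mc P_{ij}$ earns its keep: by projecting away one spin at a time one reduces any hypothetical cheap three-spin path to a two-spin Ising path and derives a contradiction with the Ising energy barrier. A secondary technical nuisance is that Assumption B is needed to guarantee the $\ell_c^{ij}$ are the true critical lengths (no degeneracy in the ceiling function), and Assumption A guarantees the torus is large enough to accommodate the critical droplets so that the Ising formulas apply verbatim; I would invoke both at the appropriate places without belaboring them.
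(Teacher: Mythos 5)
Your treatment of $\Gamma(\bf1,\bf3)$ and $\Gamma(\bf2,\bf3)$ (projection for the lower bound, Ising reference path for the upper bound) and your upper bound for $\Gamma(\bf1,\bf2)$ via the detour $\bf1\to\bf3\to\bf2$ coincide with the paper's proof. The genuine gap is in your lower bound $\Gamma(\bf1,\bf2)\ge\Gamma_{13}^\star$. Your dichotomy --- ``either nucleate a spin-$3$ droplet, costing $\Gamma_{13}^\star$ by projecting the initial segment, or stay inside the $\{1,2\}$-subspace, costing $\Gamma_{12}^\star$'' --- does not close the argument: merely introducing $3$-spins is cheap (a subcritical $3$-droplet costs far less than $\Gamma_{13}^\star$), so a path that grows a large $2$-droplet while occasionally carrying small $3$-clusters falls into neither branch as you have stated them; the $\{1,3\}$-projection of an initial segment ending at a tiny $3$-droplet gives essentially no bound. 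The correct (and much shorter) argument, which is the paper's, is to apply $\mc P_{23}$ (replace every $2$-spin by a $3$-spin) to the \emph{entire} path: since $\mc P_{23}(\bf2)=\bf3$, any path $\gamma:\bf1\to\bf2$ projects to a path $\bf1\to\bf3$ in the two-spin $\{1,3\}$ Ising model with field $h_3-h_1$, which must reach height $H(\bf1)+\Gamma_{13}^\star$; by Lemma \ref{lem:proj} the projection does not increase energy, so $\sup_{\sigma\in\gamma}H(\sigma)\ge H(\bf1)+\Gamma_{13}^\star$. No case analysis on whether or how spin $3$ is used is needed.

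A secondary imprecision: for the upper bound $\Gamma(\bf1,\bf2)\le\Gamma_{13}^\star$ you assert that the $\bf3\to\bf2$ leg is ``downhill in the relevant sense,'' but that leg is a reversed nucleation whose maximum is $\Phi_{23}^\star=H(\bf2)+\Gamma_{23}^\star$; what you actually need is the quantitative comparison $\Phi_{23}^\star<\Phi_{13}^\star$ (in fact $\Phi_{23}^\star<H(\bf1)$), i.e.
\[
\Gamma_{23}^\star < (h_2-h_1)KL,
\]
which is exactly where Assumption A enters (the paper's Lemma \ref{lem:app2}), beyond its role of letting critical droplets fit on the torus. This is an easy computation, but it should be stated rather than waved through, since for small lattices the detour through $\bf3$ could exceed $\Phi_{13}^\star$ and the claimed equality would be in doubt.
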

In other words, the communication level of $\mathbf{i}$ and $\mathbf{j}$ is equal to that of the Ising model, except for between \textbf{1} and \textbf{2}. Since $\Gamma_{13}^\star < \Gamma_{23}^\star$, \bf1 has lower energy barrier than \bf2 with respect to \bf3. 

\begin{theorem}[Stability level of \textbf{1}, \textbf{2}]
\label{thm:stab}
    The stability level of \textbf{1} and \textbf{2} is
    \[
        \mathcal{V}_{\mathbf{1}} = \Gamma_{13}^\star, \quad \mathcal{V}_{\mathbf{2}} = \Gamma_{23}^\star
    \]
\end{theorem}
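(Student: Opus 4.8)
The plan is to prove the two equalities by matching upper and lower bounds, treating $\mathbf 1$ and $\mathbf 2$ in parallel. The upper bounds are immediate from Theorem \ref{thm:gamma}: since $h_3 > h_1$ we have $H(\mathbf 3) < H(\mathbf 1)$, so $\mathbf 3 \in I_{\mathbf 1}$ and $\mathcal{V}_{\mathbf 1} = \Gamma(\mathbf 1, I_{\mathbf 1}) \le \Gamma(\mathbf 1, \mathbf 3) = \Gamma_{13}^\star$; likewise $\mathbf 3 \in I_{\mathbf 2}$ gives $\mathcal{V}_{\mathbf 2} \le \Gamma(\mathbf 2, \mathbf 3) = \Gamma_{23}^\star$. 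The substance is the reverse inequalities $\mathcal{V}_{\mathbf 1} \ge \Gamma_{13}^\star$ and $\mathcal{V}_{\mathbf 2} \ge \Gamma_{23}^\star$, i.e. that every path leaving the bottom of the valley of $\mathbf i$ toward a strictly lower configuration must reach height $H(\mathbf i) + \Gamma^\star$.

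For $\mathcal{V}_{\mathbf 1}$, fix $\eta$ with $H(\eta) < H(\mathbf 1)$ and an arbitrary path $\omega = (\omega_0 = \mathbf 1, \dots, \omega_T = \eta)$, and apply the projection operator $\mathcal{P}_{23}$ (recolouring every spin $2$ into a spin $3$; see \cite{kim2022potts}) site by site along $\omega$. Recolouring all spins of one colour into another can only create, never destroy, agreeing edges, while replacing the field $h_2$ by the larger $h_3$ lowers the field term; hence $\mathcal{P}_{23}$ never raises the energy, $H(\mathcal{P}_{23}\xi) \le H(\xi)$ for all $\xi$. Moreover $\mathcal{P}_{23}\mathbf 1 = \mathbf 1$, consecutive configurations of $\mathcal{P}_{23}\omega$ still differ in at most one site (collapsing repeats gives an honest path), and every $\mathcal{P}_{23}\omega_t$ lies in $\{1,3\}^V$. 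On that two-colour subspace $H$ agrees, up to an additive constant, with a ferromagnetic Ising Hamiltonian with external field $h_3 - h_1 \in (0,1)$, for which Assumptions A and B provide the standard-case hypotheses $K, L > 2/(h_3 - h_1)$ and $2/(h_3 - h_1) \notin \ZZ$ (the latter since Assumption B with $(a,b) = (n,n)$ forbids $n(h_3 - h_1) \in \ZZ$). Since $H(\mathcal{P}_{23}\eta) \le H(\eta) < H(\mathbf 1)$, the path $\mathcal{P}_{23}\omega$ joins $\mathbf 1$ to a strictly lower configuration inside this Ising model, so by the characterisation of the stability level of the metastable state in the standard-case Ising model \cite{neves1991critical} its maximal energy is at least $H(\mathbf 1) + f(h_3 - h_1) = H(\mathbf 1) + \Gamma_{13}^\star$. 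As $H(\mathcal{P}_{23}\omega_t) \le H(\omega_t)$ for all $t$, the same bound holds for $\omega$; taking the infimum over $\omega$ and then over $\eta \in I_{\mathbf 1}$ gives $\mathcal{V}_{\mathbf 1} \ge \Gamma_{13}^\star$.

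The bound for $\mathbf 2$ is the same with $\mathcal{P}_{23}$ replaced by $\mathcal{P}_{13}$: this projection recolours $1$'s into $3$'s, fixes $\mathbf 2$, maps into $\{2,3\}^V$, and is again energy non-increasing (more agreeing edges; $h_1$ replaced by the larger $h_3$). Any $\eta$ with $H(\eta) < H(\mathbf 2)$ is sent to a strictly lower configuration of the standard-case Ising model with field $h_3 - h_2 \in (0,1)$, whose metastable state $\mathbf 2$ has stability level $f(h_3 - h_2) = \Gamma_{23}^\star$, and projecting a path yields $\mathcal{V}_{\mathbf 2} \ge \Gamma_{23}^\star$.

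The only delicate point is the reduction itself: one must verify that the \emph{favourable} projections $\mathcal{P}_{23}$ and $\mathcal{P}_{13}$ truly do not raise the energy of any configuration, and that the resulting two-colour restrictions of $H$ satisfy the standard-case requirements of \cite{neves1991critical}. Once this is checked, the multi-spin stability question collapses to the classical single-droplet estimate for the planar Ising model, and no further landscape analysis is needed beyond Theorem \ref{thm:gamma}.
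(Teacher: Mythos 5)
Your proposal is correct and follows essentially the same route as the paper: the lower bounds $\mathcal{V}_{\mathbf 1} \ge \Gamma_{13}^\star$ and $\mathcal{V}_{\mathbf 2} \ge \Gamma_{23}^\star$ via the energy-non-increasing projections $\mathcal{P}_{23}$ and $\mathcal{P}_{13}$ (Lemma \ref{lem:proj}) reducing to the standard-case Ising stability level of \cite{neves1991critical}, and the upper bounds from the Ising reference path, i.e.\ Theorem \ref{thm:gamma}. Your explicit verification of the standard-case hypotheses from Assumptions A and B, and your statement of the inequality directions, are in fact slightly more careful than the paper's own write-up, but the argument is the same.
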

Note that Theorem \ref{thm:gamma} provides an upper bound of the stability level. The proof is discussed in Subsection \ref{subsec:land}.

The next theorem shows that \textbf{2} has the largest stability. Moreover, \textbf{1} has the second-largest stability.
\begin{theorem}[Stability level]
\label{thm:stab2}
    For any state $\sigma \neq \bf1, \bf2, \bf3$, we have
    \[
        \mc V_\sigma < \mc V_{\bf1} < \mc V_{\bf2}.
    \]
\end{theorem}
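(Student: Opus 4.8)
The plan is to prove the two strict inequalities $\mathcal{V}_{\mathbf1} < \mathcal{V}_{\mathbf2}$ and $\mathcal{V}_\sigma < \mathcal{V}_{\mathbf1}$ for all $\sigma \neq \mathbf1, \mathbf2, \mathbf3$ separately. The first is immediate from Theorem \ref{thm:stab}: $\mathcal{V}_{\mathbf1} = \Gamma_{13}^\star = f(h_3 - h_1)$ and $\mathcal{V}_{\mathbf2} = \Gamma_{23}^\star = f(h_3 - h_2)$, and since $0 < h_3 - h_2 < h_3 - h_1 < 1$ together with the strict monotonicity of $f$ on $(0,1)$ recorded after \cite{neves1991critical}, one gets $\Gamma_{13}^\star < \Gamma_{23}^\star$. (Assumption B guarantees the relevant arguments avoid the integer points where $f$ is only weakly monotone, so strictness holds.) The substantive content is therefore the bound $\mathcal{V}_\sigma < \mathcal{V}_{\mathbf1} = \Gamma_{13}^\star$ for every other $\sigma$, i.e. every configuration other than the three monochromatic ones can reach a strictly lower-energy configuration by crossing an energy barrier strictly smaller than $\Gamma_{13}^\star$.

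The strategy for this bound is a case analysis on the spin content of $\sigma$, using the projection operators $\mathcal{P}_{ij}$ from \cite{kim2022potts} (Lemma \ref{lem:proj}) to reduce to Ising-type estimates whenever possible. First, if $\sigma$ uses at most two spin types, say spins $i$ and $j$, then $\sigma$ lies on a face of the configuration space isometric to an Ising model with field $h_j - h_i$ (or its negative), and the claim follows from the known Ising stability-level estimates in \cite{neves1991critical}: any non-monochromatic two-spin configuration has stability level strictly below the Ising energy barrier, which is at most $\Gamma_{13}^\star$. Second, and this is the crux, if $\sigma$ contains all three spins, I would apply $\mathcal{P}_{23}$ (turn all spin-$2$ sites into spin-$3$) to land in a two-spin configuration; the key inequality to establish is that this move lowers energy by at least as much as the barrier it must cross is small — concretely, that the communication height $\Phi(\sigma, \mathcal{P}_{23}\sigma)$ minus $H(\sigma)$ is strictly less than $\Gamma_{13}^\star$. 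Here one uses that updating a spin-$2$ site to spin-$3$ is energetically favorable on the bulk (since $h_3 > h_2$), so the only obstruction is local geometry at cluster boundaries, and these local costs are bounded by quantities of order $h_3 - h_2$ or a few interaction bonds, which stay below $\Gamma_{13}^\star$ under Assumption A.

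The main obstacle — exactly the one flagged in the "Mathematical Obstacles" paragraph — is the sub-case where a cluster of spin $3$ is wrapped by both spin $1$ and spin $2$, so that neither a naive $\mathcal{P}_{23}$ nor $\mathcal{P}_{13}$ projection visibly decreases energy along a cheap path. My plan mirrors the route sketched in the introduction: enumerate the configurations for which a purely local update (growing or shrinking the spin-$3$ cluster along an edge, as in Figure \ref{fig:col2}) already exhibits a descent path of height below $\Gamma_{13}^\star$, discard them, and show the surviving worst case is the "windmill" configuration of Figure \ref{fig:windmill}; for that single explicit configuration one computes $\mathcal{V}_\sigma$ directly and checks it is $< \Gamma_{13}^\star$, invoking Assumption A to ensure the lattice is large enough that boundary wrap-around does not create spurious low-cost shortcuts. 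Finally, one should double-check the edge cases where $\sigma$ is monochromatic-but-not $\mathbf1,\mathbf2,\mathbf3$ cannot occur (there are only three monochromatic states), and where $\sigma$ has the same energy as some $\mathbf i$ but is not equal to it — here Assumption B rules out accidental energy ties, so $I_\sigma \neq \emptyset$ and $\mathcal{V}_\sigma$ is finite as required. Combining the two inequalities yields $\mathcal{V}_\sigma < \mathcal{V}_{\mathbf1} < \mathcal{V}_{\mathbf2}$.
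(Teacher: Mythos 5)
Your decomposition into $\mc V_{\bf1} < \mc V_{\bf2}$ and $\mc V_\sigma < \mc V_{\bf1}$ matches the paper, and the first inequality is fine (Theorem \ref{thm:stab} plus strict monotonicity of $f$; Assumption B is not needed for strictness). But your two-spin step already rests on a wrong comparison: you claim a non-monochromatic configuration using only spins $i,j$ has stability level ``strictly below the Ising energy barrier, which is at most $\Gamma_{13}^\star$''. Since $f$ is strictly decreasing and $h_2-h_1,\,h_3-h_2<h_3-h_1$, in fact $\Gamma_{12}^\star>\Gamma_{13}^\star$ and $\Gamma_{23}^\star>\Gamma_{13}^\star$, so being below the barrier of the $(1,2)$ or $(2,3)$ face proves nothing. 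What rescues this case is the quantitative Ising fact, proved by hand in the paper's Case 1 and Case 2-1 of Subsection \ref{subsec:stab}: a rectangular droplet can be grown or shrunk row by row at a cost below $2$, giving $\mc V_\sigma<2<\Gamma_{13}^\star$ (recall $\Gamma_{13}^\star>5$ by the appendix). You need that explicit computation, not the comparison with the face barrier.

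The more serious gap is the three-spin case. The key inequality you propose to establish, $\Phi(\sigma,\mc P_{23}\sigma)-H(\sigma)<\Gamma_{13}^\star$, is false in general. Take $\sigma$ to be a sea of $2$-spins containing one isolated $1$-spin and one isolated (subcritical) $3$-spin. Lemma \ref{lem:proj} guarantees $H(\mc P_{23}\sigma)<H(\sigma)$, but gives no control on the barrier: any path from $\sigma$ to $\mc P_{23}\sigma$ must convert the macroscopic $2$-sea into $3$-spins, and projecting such a path by $\mc P_{12}$ reduces it to a $2$-to-$3$ Ising nucleation, so $\Phi(\sigma,\mc P_{23}\sigma)-H(\sigma)\ge\Gamma_{23}^\star-C$ for a constant $C$ of order one; since $\Gamma_{23}^\star>\Gamma_{13}^\star$ (and becomes arbitrarily large as $h_3-h_2\to0$), your heuristic that only ``local costs at cluster boundaries'' obstruct the projection fails. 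The cheap descent from such a $\sigma$ is the opposite move --- shrink the subcritical $3$-droplet or erase the $1$-spin --- so the lower-energy target must be chosen configuration by configuration, which is exactly what the paper's argument organizes: Lemma \ref{lem:rect} forces $3$-clusters to be rectangles, Lemmas \ref{lem:col1} and \ref{lem:col2} (via the colored-edge propagation of Figure \ref{fig:windmill-mid}) eliminate mixed boundaries except for the windmill configuration, and that final case is settled by the bounds $a\le 1/(h_3-h_1)$, $b\le 1/(h_3-h_2)$ together with the explicit column-by-column flipping path of Figure \ref{fig:windmill}. Your fallback for the ``wrapped cluster'' sub-case merely restates this outline without those lemmas or the final computation, and the difficulty is not confined to that sub-case in the first place; so as written the proposal does not establish $\mc V_\sigma<\mc V_{\bf1}$.
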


Finally, we determine minimal gates between \bf1, \bf2, and \bf3. Let $\rect i j \ell m$ be the collection of configurations with $i$-spin $\ell \times m$ rectangle in a sea of $j$-spin, and $\dect i j \ell m$ obtained from $\rect i j \ell m$ by attaching a single $i$-spin on the length $\ell$ side of the rectangle.
\begin{theorem}[Minimal gates]
\label{thm:gate}
    Let
    \begin{align*}
        \mc G_{13} = \dect 3 1 {\ell_c^{13}+1}{\ell_c^{13}} , \quad \mc G_{23} = \dect 3 2 {\ell_c^{23}+1}{\ell_c^{23}},
    \end{align*}
    which is a minimal gate of the Ising model between \bf1 and \bf3, \bf2 and \bf3, respectively. Then $\mc G_{13}$ is the minimal gate for transitions $\bf1 \to \bf3$, $\bf1 \to \bf2$, and $\bf1 \to \bf2 \cup \bf3$. Also, $\mc G_{23}$ is the minimal gate for transitions $\bf2 \to \bf3$ and $\bf2 \to \bf1 \cup \bf3$. 
\end{theorem}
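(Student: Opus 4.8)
The plan is to transfer the known minimal-gate structure of the two-dimensional Ising model (as established in \cite{neves1991critical}, see also \cite{Nardi_2015}) to the Potts setting, using the projection operator $\mc P_{ij}$ together with the energy-barrier and stability-level results already proven in Theorems \ref{thm:gamma}, \ref{thm:stab}, \ref{thm:stab2}. The key structural fact I would exploit is that an optimal path between two monochromatic states must, by Theorem \ref{thm:gamma}, stay at energy $\le \Phi_{13}^\star$ (resp.\ $\le \Phi_{23}^\star$) and that passing below a monochromatic state's energy inside such a path is severely constrained by Theorem \ref{thm:stab2}, which forces the optimal paths to be essentially ``Ising-type'' along a single pair of spins near the saddle.

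First I would establish the upper bound on the communication height realized by an explicit reference path: for $\bf1 \to \bf3$, the Ising nucleation path that grows a $3$-droplet inside a sea of $1$'s passes through $\dect 3 1 {\ell_c^{13}+1}{\ell_c^{13}}$ at its highest point, has maximal energy exactly $\Phi_{13}^\star$, and only visits configurations with spins in $\{1,3\}$; similarly for $\bf2 \to \bf3$ with spins in $\{2,3\}$. This shows $\mc G_{13}$ (resp.\ $\mc G_{23}$) is contained in some gate. For the transition $\bf1 \to \bf2$, note that by Theorem \ref{thm:gamma} we have $\Gamma(\bf1,\bf2) = \Gamma_{13}^\star$, and the natural optimal path is: grow a $3$-droplet in the sea of $1$'s through the critical configuration $\mc G_{13}$, reach $\bf3$, and then descend to $\bf2$ along the $\bf3 \to \bf2$ Ising path (which stays below $H(\bf3) < H(\bf2)$, hence below $\Phi_{13}^\star$). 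So the saddle for $\bf1 \to \bf2$ is again at $\mc G_{13}$, and similarly $\bf1 \to \bf2 \cup \bf3$ is governed by $\mc G_{13}$.

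Next, for the matching lower bound and minimality, I would argue that \emph{every} optimal path $\omega$ must pass through $\mc G_{13}$ (resp.\ $\mc G_{23}$). Consider $\bf1 \to \bf3$: along $\omega$, as long as the configuration has only spins $\{1,3\}$, the Ising isoperimetric analysis of \cite{neves1991critical} shows the path must cross $\dect 3 1 {\ell_c^{13}+1}{\ell_c^{13}}$ to get below the saddle energy on the $\bf3$-side. The remaining issue is to rule out optimal paths that introduce the third spin (spin $2$): I would apply the projection $\mc P_{21}$ (turning $2$'s into $1$'s), which does not increase energy for configurations relevant here because $h_1 < h_2$ would seem to cut the wrong way --- so more carefully I apply $\mc P_{23}$ and $\mc P_{21}$ and use that the energy of the projected path is bounded by that of $\omega$ up to controlled corrections, together with Theorem \ref{thm:stab2} to exclude detours through the deep valley of $\bf2$ at energy $\le \Phi_{13}^\star$ (this is where $\mathcal V_{\bf2} > \mathcal V_{\bf1} = \Gamma_{13}^\star$ is used: a path staying below $\Phi_{13}^\star$ cannot enter and leave the valley of $\bf2$). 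This reduces any optimal path to an effectively two-spin path near the saddle, and then the Ising result identifies the gate. Minimality with respect to set inclusion follows because the Ising gate $\dect j i {\ell_c^{ij}+1}{\ell_c^{ij}}$ is already known to be a minimal gate for the Ising model, and every element of it is visited by some optimal Potts path (the reference path together with its lattice translations and the symmetry of the saddle).

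The main obstacle I anticipate is precisely the exclusion of ``three-spin detours'' in the optimal paths: one must show that no optimal path between two monochromatic states can profitably use the third spin type near the saddle. The projection operator $\mc P_{ij}$ controls energies, but one has to check that projecting an optimal path yields a path whose maximal energy does not exceed $\Phi_{ij}^\star$, and — crucially for minimality — that projection does not destroy the property of passing through the claimed gate. Handling the case where a spin-$3$ cluster is simultaneously adjacent to spin-$1$ and spin-$2$ regions (the ``windmill'' configuration flagged in the introduction, cf.\ Figure \ref{fig:windmill}) is the delicate point; I would dispose of it by a local-modification argument showing such configurations are never on an optimal path, exactly as in the proof of Theorem \ref{thm:stab2}, and then conclude.
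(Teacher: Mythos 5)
Your outline shares the paper's skeleton (projection operator, Ising reference paths for the upper bound, essentiality of each saddle configuration for minimality), but it is missing the one mechanism that actually makes the argument work, and the substitutes you propose do not hold up. The paper's key step is an equality-forcing argument using only $\mc P_{23}$ (resp.\ $\mc P_{13}$): if $\gamma$ is an optimal path from $\bf1$ to $\bf3$, then $\sup_{\sigma\in\gamma}H(\sigma)=\Phi_{13}^\star$ and, by Lemma \ref{lem:proj}, $\mc P_{23}\gamma$ is an optimal path of the $\{1,3\}$-Ising model, hence meets the Ising gate: there is $\eta\in\gamma$ with $H(\mc P_{23}\eta)=\Phi_{13}^\star$. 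Since $\Phi_{13}^\star=H(\mc P_{23}\eta)\le H(\eta)\le\sup_{\sigma\in\gamma}H(\sigma)=\Phi_{13}^\star$, equality holds, and the equality case of Lemma \ref{lem:proj} forces $\eta=\mc P_{23}\eta\in\mc G_{13}$. This single observation shows every optimal path passes through $\mc G_{13}$ itself; there is no need to exclude ``three-spin detours'', to control $\mc P_{21}$, or to analyze windmill configurations at all. Your proposal explicitly identifies the passage from ``the projected path crosses the Ising gate'' to ``the original path crosses $\mc G_{13}$'' as the delicate point but never supplies it, which is precisely the content of the theorem.

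The fallback arguments you sketch are moreover flawed. First, $\mc P_{21}$ raises the energy (Lemma \ref{lem:proj} only covers $\mc P_{12},\mc P_{13},\mc P_{23}$), so ``bounded up to controlled corrections'' has no support and is not needed. Second, the claim that a path staying below $\Phi_{13}^\star$ cannot enter and leave the valley of $\bf2$ (invoking $\mc V_{\bf2}>\mc V_{\bf1}$) is false: by Lemma \ref{lem:app2} one has $\Phi_{23}^\star<H(\bf1)<\Phi_{13}^\star$, so an optimal $\bf1\to\bf3$ path may descend into the valley of $\bf2$, even visit $\bf2$, and exit through $\mc G_{23}$ without ever exceeding $\Phi_{13}^\star$; the theorem is true because any such path must already have crossed $\mc G_{13}$ beforehand, not because such detours are impossible. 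Relatedly, in your $\bf1\to\bf2$ reference path the $\bf3\to\bf2$ leg does not ``stay below $H(\bf3)$''---it climbs to $\Phi_{23}^\star>H(\bf2)$---and the fact you actually need there is $\Phi_{23}^\star<\Phi_{13}^\star$ from Lemma \ref{lem:app2}. Finally, for minimality, ``every element is visited by some optimal path'' is weaker than what is required; the paper proves essentiality by noting that the reference path through a given $\sigma\in\mc G_{13}$ is an optimal Potts path (it is fixed by $\mc P_{23}$) whose unique configuration at saddle height is $\sigma$, so every gate must contain $\sigma$.
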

The proofs of Theorem \ref{thm:stab2} and \ref{thm:gate} are provided in Subsection \ref{subsec:stab}.

\subsection{Large deviation-type results}
\label{subsec:2}

In this section, we give an exponential scale estimate on a hitting time of \bf3. For a subset $A \subseteq \XX$, we denote by $\tau_A$ the hitting time of $A$ and by $\tau_A^+$ the return time to $A$:
\begin{align*}
    \tau_A & \coloneqq \inf\left\{t \ge 0: X(t) \in A\right\}, \\
    \tau_A^+ & \coloneqq \inf\left\{t > 0 : \, X(t) \in A, \, X(s) \neq X(0) \text{ for some } 0 < s < t\right\}.
\end{align*}

\begin{theorem}[Large Deviation-type result]
    \label{thm:LDP} For any $\ee > 0$, we have
    \begin{align}
        \lim_{\beta\to\infty} \PP_{\bf2}^\beta[e^{\beta(\Gamma_{23}^\star-\ee)} < \tau_{\bf3} < e^{\beta(\Gamma_{23}^\star + \ee)}] = 1 \label{eq:LDP1} \\
        \lim_{\beta\to\infty} \PP_{\bf1}^\beta[e^{\beta(\Gamma_{13}^\star-\ee)} < \tau_{\bf3} < e^{\beta(\Gamma_{13}^\star + \ee)}] = 1 \label{eq:LDP2}
    \end{align}
    Moreover, we have
    \[
        \lim_{\beta\to\infty} \frac{1}{\beta}\log\EE_{\mathbf{2}}^\beta[\tau_{\mathbf{3}}] = \Gamma_{23}^\star
    \]
    and a convergence of distribution under $\PP_{\bf2}^\beta$
    \[
        \frac{\tau_{\bf3}}{\EE_{\bf2}^\beta[\tau_{\bf3}]} \xrightarrow{d} \exp(1)
    \]
    where $\exp(1)$ denotes the exponential random variable with a mean value 1.
\end{theorem}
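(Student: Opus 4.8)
The plan is to derive Theorem~\ref{thm:LDP} from the energy-landscape results (Theorems~\ref{thm:gamma}--\ref{thm:stab2}) together with the standard machinery of the pathwise approach to metastability, as developed in \cite{Nardi_2015, olivieri2005large}. The key structural inputs are: (i) $\mc V_{\bf1} = \Gamma_{13}^\star$ and $\mc V_{\bf2} = \Gamma_{23}^\star$ (Theorem~\ref{thm:stab}), (ii) every $\sigma \neq \bf1,\bf2,\bf3$ has $\mc V_\sigma < \mc V_{\bf1}$ (Theorem~\ref{thm:stab2}), and (iii) the communication-height values of Theorem~\ref{thm:gamma}. Write $\Gamma^\star = \Gamma_{23}^\star$ and recall $\Gamma_{13}^\star < \Gamma_{23}^\star$.

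First I would establish the two probabilistic confinement/escape estimates \eqref{eq:LDP1} and \eqref{eq:LDP2}. For the \emph{lower bounds} on $\tau_{\bf3}$: starting from $\bf2$, the process must cross a saddle of height $\Phi(\bf2,\bf3) = H(\bf2) + \Gamma_{23}^\star$ to reach $\bf3$ (here I use that $\Gamma(\bf2,\bf3) = \Gamma_{23}^\star$, and one must also check that no lower saddle to $\bf3$ exists via $\bf1$, which follows since $\Gamma(\bf2,\bf1) = \Gamma_{13}^\star$ but then escaping $\bf1$ costs an additional $\Gamma_{13}^\star$, and more carefully from the gate description of Theorem~\ref{thm:gate}); a standard argument (e.g.\ the ``deepest valley'' estimate, cf.\ Theorem~4.1 or the analogous result in \cite{Nardi_2015, olivieri2005large}) then gives $\PP_{\bf2}^\beta[\tau_{\bf3} < e^{\beta(\Gamma_{23}^\star - \ee)}] \to 0$. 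Similarly, starting from $\bf1$, reaching $\bf3$ requires crossing height $H(\bf1) + \Gamma_{13}^\star$, giving the lower bound in \eqref{eq:LDP2}. For the \emph{upper bounds}: the recurrence property $\mc V_\sigma < \Gamma_{13}^\star \le \Gamma_{23}^\star$ for all $\sigma \notin \{\bf1,\bf2,\bf3\}$ means that from any starting configuration the process reaches $\{\bf1,\bf2,\bf3\}$ in time $e^{o(\beta)}$ with high probability; combined with the sharp values $\Phi(\bf i,\bf3) - H(\bf i)$ for $i=1,2$ and a union bound over the (polynomially many) excursions, one gets $\PP_{\bf2}^\beta[\tau_{\bf3} > e^{\beta(\Gamma_{23}^\star + \ee)}] \to 0$ and $\PP_{\bf1}^\beta[\tau_{\bf3} > e^{\beta(\Gamma_{13}^\star + \ee)}] \to 0$. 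For the $\bf1$-start upper bound one uses that from $\bf1$ the process, after crossing its saddle at height $\Gamma_{13}^\star$, either falls into $\bf3$ or into $\bf2$ or back into $\bf1$; even in the worst case it reaches $\bf3$ within $O(1)$ such attempts at the $\Gamma_{13}^\star$ scale \emph{provided one does not get stuck in $\bf2$} — but here, since we are only proving an $e^{\beta\ee}$-scale upper bound and $\Gamma_{23}^\star$ could exceed $2\Gamma_{13}^\star$, the cleanest route is to invoke $\Phi(\bf1,\bf3) = H(\bf1) + \Gamma_{13}^\star$ directly (a consequence of Theorem~\ref{thm:gamma}, which asserts $\Gamma(\bf1,\bf3) = \Gamma_{13}^\star$, \emph{not} merely $\Gamma(\bf1,\bf2)$) and note that whenever the process sits at a configuration of energy $\le H(\bf1)$ it has probability $e^{-o(\beta)}$ per attempt of reaching $\bf3$ along an optimal path of height $\Phi(\bf1,\bf3)$; the subtlety about $\bf2$ affects only the \emph{expectation} $\EE_{\bf1}^\beta[\tau_{\bf3}]$ (addressed elsewhere in the paper, e.g.\ Theorem~\ref{thm:EKinf}), not the probabilistic statement \eqref{eq:LDP2}.

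Next I would upgrade the $\bf2$-start probability estimate to the expectation statement $\frac1\beta \log \EE_{\bf2}^\beta[\tau_{\bf3}] \to \Gamma_{23}^\star$. The lower bound $\liminf \ge \Gamma_{23}^\star$ is immediate from \eqref{eq:LDP1} via Markov's inequality: $\EE_{\bf2}^\beta[\tau_{\bf3}] \ge e^{\beta(\Gamma_{23}^\star - \ee)} \PP_{\bf2}^\beta[\tau_{\bf3} \ge e^{\beta(\Gamma_{23}^\star - \ee)}] \to e^{\beta(\Gamma_{23}^\star-\ee)}(1 - o(1))$. For the matching upper bound $\limsup \le \Gamma_{23}^\star$ one cannot simply integrate \eqref{eq:LDP1} (the tail beyond $e^{\beta(\Gamma_{23}^\star+\ee)}$ could a priori be heavy); instead I would use a geometric-trials argument. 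Because $\bf2$ has maximal stability among $\{\bf1,\bf2\}$ and all other configurations have stability $< \Gamma_{13}^\star < \Gamma_{23}^\star$, starting from \emph{any} $\sigma$ the process reaches $\{\bf2,\bf3\}$ within time $e^{\beta(\Gamma_{23}^\star - \delta)}$ with probability bounded below by a constant (indeed by $1 - o(1)$) — this uses Theorem~\ref{thm:stab2} to rule out being trapped in a competing deep valley, and in particular rules out the process getting stuck near $\bf1$ for the $\Gamma_{23}^\star$ timescale, since $\mc V_{\bf1} = \Gamma_{13}^\star < \Gamma_{23}^\star$ so the $\bf1$-valley is escaped on a strictly shorter timescale. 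From $\bf2$, each ``attempt'' of duration $e^{\beta(\Gamma_{23}^\star+\ee)}$ hits $\bf3$ with probability $\ge 1/2$ for large $\beta$ (by \eqref{eq:LDP1} applied at a slightly smaller $\ee$). Hence $\tau_{\bf3}$ is stochastically dominated by a geometric number of such attempts, giving $\EE_{\bf2}^\beta[\tau_{\bf3}] \le C e^{\beta(\Gamma_{23}^\star + \ee)}$, and letting $\ee \downarrow 0$ yields the claim.

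Finally, the exponential-law convergence $\tau_{\bf3}/\EE_{\bf2}^\beta[\tau_{\bf3}] \xrightarrow{d} \exp(1)$ under $\PP_{\bf2}^\beta$ I would obtain by quoting the general criterion for asymptotic exponentiality of metastable exit times — either the loss-of-memory / Laplace-transform approach of \cite{Nardi_2015, bovier2015metastability} or the potential-theoretic characterization of \cite{Beltr_n_2010}. The hypotheses to verify are: the ``no deeper trap'' condition (already guaranteed by Theorem~\ref{thm:stab2}, which ensures $\bf3$ is the unique configuration of minimal energy reachable without paying more than $\Gamma_{23}^\star$, and that the only valley competing with $\bf2$ on timescales $\lesssim e^{\beta\Gamma_{23}^\star}$ is $\bf3$ itself), and a recurrence estimate showing that after a short initial transient the process, conditioned not to have hit $\bf3$, has essentially forgotten its starting point and sits in the $\bf2$-valley. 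The \textbf{main obstacle} is this last verification: one must show that the process starting from $\bf2$ and conditioned on $\{\tau_{\bf3} > t\}$ returns to a neighborhood of $\bf2$ — specifically to $\{\sigma : H(\sigma) \le H(\bf2)\}$ restricted to the $\bf2$-valley — on a timescale $e^{o(\beta)}$, so that the renewal structure kicks in; here one has to control the possibility, flagged in the introduction, that an excursion from $\bf2$ wanders toward $\bf1$ before returning, but since $\mc V_{\bf1} < \mc V_{\bf2}$ any such detour is itself resolved in time $e^{\beta(\Gamma_{13}^\star + o(1))} = e^{o(\beta)} \cdot e^{\beta\Gamma_{13}^\star}$, which is $o(\EE_{\bf2}^\beta[\tau_{\bf3}])$, so it does not disrupt the exponential limit. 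Assembling these pieces with the standard theorem gives the convergence in distribution, completing the proof.
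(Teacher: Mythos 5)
Your handling of the $\bf2$-started statements is essentially the paper's route (the paper simply quotes the general results of \cite{Nardi_2015}, Theorems 3.17/3.19 and Propositions 3.18/3.20, since $\bf2$ has maximal stability), and your lower bounds are fine. The genuine gap is in the upper bound of \eqref{eq:LDP2}, precisely at the point you dismiss: the claim that ``the subtlety about $\bf2$ affects only the expectation, not the probabilistic statement'' is not tenable. On the event $\{\tau_{\bf2} < \tau_{\bf3}\}$, the strong Markov property together with \eqref{eq:LDP1} forces $\tau_{\bf3} \ge e^{\beta(\Gamma_{23}^\star - \delta)} \gg e^{\beta(\Gamma_{13}^\star + \ee)}$ for small $\ee, \delta$, so \eqref{eq:LDP2} is essentially equivalent to also proving $\limb \PP_{\bf1}^\beta[\tau_{\bf2} < \tau_{\bf3}] = 0$. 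This vanishing is not a consequence of the landscape facts you invoke: by Theorem \ref{thm:gamma}, $\Gamma(\bf1,\bf2) = \Gamma(\bf1,\bf3) = \Gamma_{13}^\star$, so the communication heights (and hence capacities) from $\bf1$ to $\bf2$ and to $\bf3$ have the same exponential order, and renewal-type bounds such as Lemma \ref{lem:pot_est} only give an $O(1)$ bound on this probability. Your per-attempt heuristic also breaks down inside the $\bf2$-valley: once the process has fallen to $\bf2$ (a configuration of energy below $H(\bf1)$), what governs escape within the time window $e^{\beta(\Gamma_{13}^\star + \ee)}$ is the depth $\Gamma_{23}^\star$ measured from $H(\bf2)$, not the absolute height $\Phi(\bf1,\bf3)$; the chance of surmounting it in that window is of order $e^{-\beta(\Gamma_{23}^\star - \Gamma_{13}^\star - \ee)} \to 0$.

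The paper closes exactly this gap with its most technical ingredient: \cite[Corollary 3.16]{Nardi_2015} requires that the tube of typical paths $T_{\bf3}(\bf1)$ avoid every cycle deeper than $C_{\bf3}(\bf1)$, and verifying this amounts to describing the typical tube explicitly and showing it contains no configuration with 2-spins. That is the content of Lemma \ref{lem:cycle} (classification of the maximal cycles met along the growth of a 3-droplet in a sea of 1-spins, their bottoms and depths) and of the proof of Proposition \ref{prop:pot_zero}, which then feeds into the proof of \eqref{eq:LDP2}. Your proposal needs this tube analysis (or an equivalent quantitative argument that the $\bf1 \to \bf3$ transition avoids the $\bf2$-valley with probability $1 - o_\beta(1)$) before the upper bound in \eqref{eq:LDP2} can be asserted; without it, the argument as written does not go through.
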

The underlying argument is based on a pathwise approach, which can be found in \cite{Nardi_2015}. Since \textbf{2} is the most metastable state, the energy barrier $\Gamma_{23}^\star$ directly induces an exponential scale of the hitting time from \textbf{2} to \textbf{3} as well as the distribution.

In contrast to $\PP_{\bf2}^\beta$, the result on $\PP_{\bf1}^\beta$ needs much more detailed knowledge of the energy landscape. In view of \cite[Corollary 3.16]{Nardi_2015}, we need to say that the dynamics avoid a valley deeper than $\mc V_{\bf1}$. This can be done by determining a tube of typical paths from \bf1 to \bf3, which is a region where the dynamics stay with probability close to 1. We saw in Theorem \ref{thm:stab2} that the only valley deeper than $\mc V_{\bf1}$ is that of \bf2, so it suffices to show that \bf2 lies outside of the tube. We review various notions of pathwise approach in Subsection \ref{subsec:pathwise}. 

The tube of typical paths also implies the following proposition.
\begin{proposition}
\label{prop:pot_zero}
    We have that
    \[
        \limb \PP_{\bf1}^\beta[\tau_{\bf2} < \tau_{\bf3}] = 0.
    \]
\end{proposition}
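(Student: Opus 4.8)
The plan is to deduce the claim directly from the tube of typical trajectories from $\bf1$ to $\bf3$ that is built in Subsection~\ref{subsec:pathwise} for the proof of \eqref{eq:LDP2}; call this tube $\mc T$. A key feature of that construction -- already signalled in the discussion preceding the proposition -- is that $\bf2$ is \emph{not} in $\mc T$: the only valley deeper than $\mc V_{\bf1}$ is that of $\bf2$, and its exclusion from $\mc T$ is precisely what has to be verified there. Granting $\bf2\notin\mc T$, the pathwise estimates of \cite[Section~3]{Nardi_2015} that already yield \eqref{eq:LDP2} give that the dynamics stays in $\mc T$ up to time $\tau_{\bf3}$ with probability tending to $1$; on that event $\tau_{\bf3}<\tau_{\bf2}$, because the trajectory never leaves $\mc T$ before hitting $\bf3$ and $\bf2\notin\mc T$. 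Hence $\PP_{\bf1}^\beta[\tau_{\bf2}<\tau_{\bf3}]\le\PP_{\bf1}^\beta[\,\exists\,t\le\tau_{\bf3}:\ X_\beta(t)\notin\mc T\,]\to0$.

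The substantive step -- and the one I expect to be the main obstacle -- is the verification that $\bf2$ lies outside $\mc T$. Cleanly stated, what must be shown is
\[
  \Gamma'\ :=\ \min\Bigl\{\ \max_{\xi\in\omega}H(\xi)-H(\bf1)\ :\ \omega\in\Omega_{\bf1,\bf2},\ \bf3\notin\omega\ \Bigr\}\ >\ \Gamma_{13}^\star ,
\]
i.e.\ that any path from $\bf1$ to $\bf2$ avoiding $\bf3$ has communication height strictly above $H(\bf1)+\Gamma_{13}^\star$. If such a path $\omega$ had $\max_\omega H=H(\bf1)+\Gamma_{13}^\star=\Phi(\bf1,\bf2)$ (the last equality being Theorem~\ref{thm:gamma}), it would be optimal, hence by Theorem~\ref{thm:gate} would meet $\mc G_{13}=\dect 3 1 {\ell_c^{13}+1}{\ell_c^{13}}$ at some $\zeta$, and its portion after $\zeta$ would be a path from $\zeta$ to $\bf2$ that avoids $\bf3$ and never rises above $H(\zeta)=H(\bf1)+\Gamma_{13}^\star$. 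But $\zeta$ is a supercritical spin-$3$ droplet of $\sim(\ell_c^{13})^2$ sites in a sea of spin~$1$, containing no spin~$2$; to reach $\bf2$ from it a macroscopic spin-$2$ region must be created, and creating one costs, above the current energy, at least $\min\bigl(\Gamma_{12}^\star,\ \Theta(|V|)\bigr)$ -- either by nucleating a critical spin-$2$ droplet inside spin~$1$ (cost $\Gamma_{12}^\star=f(h_2-h_1)>f(h_3-h_1)=\Gamma_{13}^\star$, using the strict decrease of $f$ on $(0,1)$), or by repainting a macroscopic piece of a monochromatic cluster (cost of order $|V|$). Either way the level $H(\bf1)+\Gamma_{13}^\star$ is exceeded, a contradiction. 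Making this rigorous -- in particular excluding clever routes that first partially dismantle the spin-$3$ droplet -- is exactly the type of energy-landscape argument carried out for Theorems~\ref{thm:stab2} and \ref{thm:gate}, and is the hard part.

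Finally, once $\Gamma'>\Gamma_{13}^\star$ is in hand, the proposition also follows without the tube, by a renewal argument. Applying the strong Markov property at $\tau:=\tau_{\bf1}^+\wedge\tau_{\bf2}\wedge\tau_{\bf3}$ and conditioning on $X_\beta(\tau)\in\{\bf1,\bf2,\bf3\}$,
\[
  \PP_{\bf1}^\beta[\tau_{\bf2}<\tau_{\bf3}]\ =\ \frac{\PP_{\bf1}^\beta[\tau_{\bf2}<\tau_{\bf3}\wedge\tau_{\bf1}^+]}{\PP_{\bf1}^\beta[\tau_{\bf2}<\tau_{\bf3}\wedge\tau_{\bf1}^+]+\PP_{\bf1}^\beta[\tau_{\bf3}<\tau_{\bf2}\wedge\tau_{\bf1}^+]}\ \le\ \frac{\PP_{\bf1}^\beta[\tau_{\bf2}<\tau_{\bf3}\wedge\tau_{\bf1}^+]}{\PP_{\bf1}^\beta[\tau_{\bf3}<\tau_{\bf2}\wedge\tau_{\bf1}^+]}.
\]
On $\{\tau_{\bf2}<\tau_{\bf3}\wedge\tau_{\bf1}^+\}$ the trajectory up to $\tau_{\bf2}$ is a path $\bf1\to\bf2$ that avoids $\bf3$ and does not return to $\bf1$, so it enters $D:=\{\sigma:H(\sigma)\ge H(\bf1)+\Gamma'\}$; thus the numerator is at most $\PP_{\bf1}^\beta[\tau_D<\tau_{\bf1}^+]$. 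Writing the two probabilities in the ratio as capacities over a common $\bf1$-dependent prefactor (the denominator for the chain killed at $\bf2$), that prefactor cancels, the Dirichlet principle gives $\capa(\bf1,D)\le C\,Z_\beta^{-1}e^{-\beta(H(\bf1)+\Gamma')}$, and the Thomson principle applied to a unit flow along the reference path through $\mc G_{13}$ -- which never visits $\bf2$ -- bounds the killed capacity below by $c\,Z_\beta^{-1}e^{-\beta(H(\bf1)+\Gamma_{13}^\star)}$. The ratio is then $O\!\bigl(e^{-\beta(\Gamma'-\Gamma_{13}^\star)}\bigr)$, which tends to $0$ since $\Gamma'>\Gamma_{13}^\star$.
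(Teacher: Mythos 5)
You correctly identify that the proof should come down to Lemma~\ref{lem:tube}: exhibit the tube $T_{\bf3}(\bf1)$ of typical paths from $\bf1$ to $\bf3$ and verify $\bf2\notin T_{\bf3}(\bf1)$, after which $\PP_{\bf1}^\beta[\tau_{\bf2}<\tau_{\bf3}]\le\PP_{\bf1}^\beta[\tau_{\partial T_{\bf3}(\bf1)}\le\tau_{\bf3}]\to0$. That is the paper's strategy. However, you then leave the substantive step open: you reduce it to $\Gamma'>\Gamma_{13}^\star$, give a heuristic for why a $\bf1\to\bf2$ path avoiding $\bf3$ should exceed $\Phi_{13}^\star$, and explicitly concede that ``making this rigorous\dots is the hard part.'' As written that is where the proposal has a genuine gap: the assertion that from $\zeta\in\mc G_{13}$ one must ``create a macroscopic spin-$2$ region,'' with cost at least $\min(\Gamma_{12}^\star,\Theta(|V|))$, is not justified, and the whole difficulty of the proposition is hidden inside that sentence. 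Paths that first reshape or partially convert the spin-$3$ droplet, or that build spin-$2$ next to it, are exactly what one must rule out.

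The paper closes this gap constructively rather than by your reduction to $\Gamma'$. Lemma~\ref{lem:cycle} characterizes the maximal cycle disjoint from $\bf3$ around each supercritical rectangle $\rect 3 1 i j$ with $i,j\ge\ell_c^{13}$: its property (1) is precisely that no configuration in the cycle contains a $2$-spin, and properties (a), (b) show the process exits through the bottom $\dect 3 1 i j\cup\dect 3 1 j i$ after which it again enters such a cycle. Together with Proposition~\ref{prop:exit} and the exit of the initial cycle of $\bf1$ through $\mc G_{13}$, this yields a complete description of the vtj-tube as a union of cycles whose configurations contain only spins $1$ and $3$. That is a much stronger statement than $\bf2\notin T_{\bf3}(\bf1)$ and makes the exclusion of $\bf2$ immediate, without ever needing the uniform lower bound $\Gamma'$. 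Your alternative capacity argument in the third paragraph is in itself sound (renewal estimate plus Dirichlet/Thomson), but it depends on the same unproven input $\Gamma'>\Gamma_{13}^\star$, so it does not repair the gap. To make the proposal complete you would need to prove something equivalent to Lemma~\ref{lem:cycle} (or to establish $\Gamma'>\Gamma_{13}^\star$ by a careful isoperimetric/projection argument of the type used for Theorems~\ref{thm:stab2} and \ref{thm:gate}).
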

The proof of Theorem \ref{thm:LDP} and Proposition \ref{prop:pot_zero} is given in Subsection \ref{subsec:path_proof}.

\subsection{Eyring--Kramers law}
\label{subsec:3}

We are further interested in the leading coefficient of the exponent term of the mean hitting time, commonly referred to as the \emph{prefactor}. The potential theory is applicable to achieve this. Indeed, the estimation of capacity yields the prefactor, and the capacity can be obtained by examining the saddle structure. We saw in Theorem \ref{thm:gate} that the saddle structure between $\bf 1$ or $\bf 2$ and $\bf 3$ is the same as that of the Ising model. This will lead us to the computation of the prefactor.

We adopt big-$O$ and little-$o$ notation on $\beta$ where $\beta$ goes to infinity. Namely, for a function $f:\RR^+ \to \RR$ on $\beta$, we denote $f(\beta) = O_\beta(1)$ if there exists constants $\beta_0 > 0$ and $C > 0$ such that
\[
    \abs{f(\beta)} \le C
\]
for all $\beta \ge \beta_0$. We denote $f(\beta) = o_\beta(1)$ if
\[
    \limb f(\beta) = 0.
\]
\begin{theorem}[Eyring--Kramers law]
    As $\beta$ goes to infinity, we have
    \label{thm:EK}
    \[
        \EE_{\bf2}^\beta[\tau_3] = \kappa_2 e^{\beta\Gamma_{23}^\star}(1 + o_\beta(1)), \quad \kappa_2 = \frac{3}{4(2\ell_c^{23} - 1)}\frac{1}{\abs{\XX}}.
    \]
\end{theorem}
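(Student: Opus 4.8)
The plan is to run the potential-theoretic (Eyring--Kramers) argument. The point of departure is the classical identity for reversible dynamics,
\[
    \EE_{\bf 2}^\beta[\tau_{\bf 3}] = \frac{1}{\capa(\bf 2, \bf 3)}\sum_{\sigma \in \XX}\mu_\beta(\sigma)\, h^\ast_{\bf 2, \bf 3}(\sigma),
\]
where $h^\ast_{\bf 2,\bf 3}(\sigma) = \PP_\sigma^\beta[\tau_{\bf 2} < \tau_{\bf 3}]$ is the equilibrium potential (the harmonic function equal to $1$ on $\bf 2$ and $0$ on $\bf 3$) and $\capa(\bf 2,\bf 3)$ is the corresponding capacity. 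Thus the theorem splits into two tasks: (i) show that $\sum_\sigma \mu_\beta(\sigma) h^\ast_{\bf 2,\bf 3}(\sigma) = \mu_\beta(\bf 2)(1 + o_\beta(1))$, and (ii) estimate $\capa(\bf 2,\bf 3)$ up to a factor $1 + o_\beta(1)$. The structural inputs are Theorem~\ref{thm:gamma}, which gives $\Phi(\bf 2,\bf 3) = \Gamma_{23}^\star + H(\bf 2)$; Theorem~\ref{thm:gate}, which identifies the minimal gate as the Ising droplet set $\mc G_{23} = \dect 3 2{\ell_c^{23}+1}{\ell_c^{23}}$, a collection of configurations using only spins $2$ and $3$; and Theorems~\ref{thm:stab} and~\ref{thm:stab2}, which give $\mc V_{\bf 2} = \Gamma_{23}^\star$ and ensure that no configuration other than $\bf 2,\bf 3$ has stability level that large.

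For task (i), the lower bound $\sum_\sigma \mu_\beta(\sigma) h^\ast_{\bf 2,\bf 3}(\sigma) \ge \mu_\beta(\bf 2)$ is immediate from $h^\ast_{\bf 2,\bf 3}(\bf 2) = 1$. For the matching upper bound I would split the sum over $\sigma \neq \bf 2$. Since $\XX$ is finite for fixed $K,L$, all configurations with $H(\sigma) > H(\bf 2)$ (this includes $\bf 1$ and its whole valley, as $H(\bf 1) > H(\bf 2)$) together contribute at most $\sum_{\sigma : H(\sigma) > H(\bf 2)}\mu_\beta(\sigma) = o_\beta(1)\,\mu_\beta(\bf 2)$, the energy gap above $H(\bf 2)$ being strictly positive. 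The remaining configurations have $H(\sigma) < H(\bf 2)$, hence (by Theorem~\ref{thm:stab2}) carry stability level below $\mc V_{\bf 1} < \mc V_{\bf 2}$ and flow down toward $\bf 3$; for these one argues that $h^\ast_{\bf 2,\bf 3}(\sigma)$ is exponentially small, since reaching $\bf 2$ before $\bf 3$ forces climbing over the saddle at height $\Phi(\bf 2,\bf 3)$, so $\mu_\beta(\sigma)h^\ast_{\bf 2,\bf 3}(\sigma) = o_\beta(1)\,\mu_\beta(\bf 2)$ as well. This is precisely the \emph{negligibility of shallow valleys} phenomenon (cf.\ Proposition~\ref{prop:neg}); it is Theorem~\ref{thm:stab2} — the fact that $\bf 2$ is strictly the most stable metastable state — that rules out any intermediate trap, exactly as in the heuristic description in the introduction.

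Task (ii) is the heart of the matter, and I expect it to be the main obstacle. I would bound $\capa(\bf 2,\bf 3)$ from above by the Dirichlet principle: construct a test function equal to $1$ on the subcritical side of the critical droplet and $0$ on the supercritical side, interpolating only across a small neighborhood of the gate $\mc G_{23}$. By Theorem~\ref{thm:gate} every optimal path traverses $\mc G_{23}$, so the Dirichlet energy concentrates entirely on edges adjacent to configurations of energy $\Phi(\bf 2,\bf 3)$ — i.e.\ growing rectangular $3$-droplets in a sea of $2$-spins — while any edge involving the third spin carries a strictly larger exponent and is negligible. This localizes the computation to the two-spin (Ising-type) landscape near the gate, where, using the projection operator $\mc P_{13}$ (resp.\ $\mc P_{23}$) of Lemma~\ref{lem:proj} to discard spin $1$, one recognizes it as the standard-case two-dimensional Ising model with external field $h_3 - h_2$. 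The matching lower bound is obtained dually — either by restricting the Dirichlet form to the same neighborhood and computing the effective resistance of the resulting finite network explicitly, or via a Berman--Konsowa or Thomson unit flow supported on the growing droplets — and Theorem~\ref{thm:gate} again guarantees that this restriction loses nothing to leading order. Carrying out the resistance computation for the quasi-square $\ell_c^{23}\times(\ell_c^{23}-1)$ rectangle connected through its protuberances to the supercritical configurations, and counting the $\abs{\XX}$ translates of the droplet on the torus, yields
\[
    \capa(\bf 2,\bf 3) = \frac{1}{Z_\beta}\,\frac{4(2\ell_c^{23}-1)}{3}\,\abs{\XX}\, e^{-\beta\Phi(\bf 2,\bf 3)}\,(1 + o_\beta(1)).
\]
The delicate points here are exactly the two localizations — showing that configurations with a third spin present, or droplets far from the critical shape, contribute only $o_\beta(1)$ to both the Dirichlet functional and the dual flow — together with extracting the precise constant $\tfrac{4(2\ell_c^{23}-1)}{3}$, both of which rely on the sharp gate description of Theorem~\ref{thm:gate} and on the projection lemma.

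To conclude, since $\bf 3$ is the unique ground state and $\XX$ is finite, $Z_\beta = e^{-\beta H(\bf 3)}(1+o_\beta(1))$, so $\mu_\beta(\bf 2) = e^{-\beta(H(\bf 2) - H(\bf 3))}(1+o_\beta(1))$. Substituting the two displays into the potential-theoretic identity and using $\Phi(\bf 2,\bf 3) - H(\bf 2) = \Gamma(\bf 2,\bf 3) = \Gamma_{23}^\star$ gives
\[
    \EE_{\bf 2}^\beta[\tau_{\bf 3}] = \frac{3}{4(2\ell_c^{23}-1)}\,\frac{1}{\abs{\XX}}\, e^{\beta\Gamma_{23}^\star}(1 + o_\beta(1)) = \kappa_2 e^{\beta\Gamma_{23}^\star}(1+o_\beta(1)),
\]
which is the claim.
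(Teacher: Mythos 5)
Your proposal is correct and follows essentially the same route as the paper: the identity $\EE_{\bf2}^\beta[\tau_{\bf3}]=\capa(\bf2,\bf3)^{-1}\sum_\sigma\mu_\beta(\sigma)\PP_\sigma^\beta[\tau_{\bf2}<\tau_{\bf3}]$, the reduction of the sum to $\mu_\beta(\bf2)$ by exactly the same two-case argument (high-energy states vs.\ low-energy states that must climb to $\Phi(\bf2,\bf3)$ to return, using Theorem \ref{thm:stab2}), and a sharp capacity estimate localized at the gate $\mc G_{23}$ of Theorem \ref{thm:gate}. The only cosmetic difference is that you phrase the capacity step as a Dirichlet upper bound plus a dual flow/effective-resistance lower bound, whereas the paper invokes the component-constancy lemmas of Bovier--den Hollander (Lemmas \ref{lem:bov1}--\ref{lem:bov2}) and then solves the reduced variational problem near the critical droplet; these are the same computation.
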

However, the estimation of $\EE_{\mathbf{1}}^\beta[\tau_{\mathbf{3}}]$ is problematic. This is because the process could fall into \textbf{2} with low probability, but it takes a long time to escape the deep valley of \textbf{2}. Note that this behavior is not captured in the Markov chain model reduction discussed in Subsection \ref{subsec:mcmr}.
\begin{theorem}
    \label{thm:EKinf}
    If $\Gamma_{12}^\star < \Gamma_{23}^\star$, or equivalently $2h_2 > h_1 + h_3$, then
    \[
        \limb e^{-\beta\Gamma_{13}^\star}\EE_{\bf1}^\beta[\tau_{\bf3}] = \infty.
    \]
\end{theorem}
Comparing this result to \eqref{eq:LDP2} of Theorem \ref{thm:LDP}, we indeed see that the rare event falling to \bf2 has a significant contribution to the expectation. Theorems \ref{thm:EK} and \ref{thm:EKinf} are proved in Subsections \ref{subsec:4.2} and \ref{subsec:4.4}, respectively.

\subsection{Markov chain model reduction}
\label{subsec:4}
\label{subsec:mcmr}

In \cite{Beltr_n_2010}, a new approach to describe a metastable behavior was introduced. Speeding up the process by an appropriate time scale, the Markov chain converges to a reduced Markov chain. This methodology is called \emph{Markov chain model reduction}.

In our model, there are two types of speeded-up Markov chains in view of Theorems \ref{thm:LDP} and \ref{thm:EK}. Let
\[
    \tb^1 = \kappa_1 e^{\beta\Gamma_{13}^\star},\quad \tb^2 = \kappa_2 e^{\beta\Gamma_{23}^\star}.
\]
where
\[
    \kappa_1 = \frac{3}{4(2\ell_c^{13} - 1)}\frac{1}{\abs{\XX}},\quad \kappa_2 = \frac{3}{4(2\ell_c^{23} - 1)}\frac{1}{\abs{\XX}}.
\]
Let $\Phi: \XX \to \{\bf1, \bf2, \bf3, [\beta]\}$ be a projection operator defined by identity map on $\{\bf1, \bf2, \bf3\}$, and $[\beta]$ elsewhere. In a sense of \cite[Definition 2.2]{Beltr_n_2010}, the speeded-up chain has the following model reduction.

\begin{theorem}[Markov chain model reduction]
    \label{thm:MCMR1}
    The speeded-up hidden Markov chain $\td X_\beta(t) = \Phi(X_\beta(\tb^1 t))$ converges to the Markov chain on $\{\bf1, \bf2, \bf3\}$ with a jump rate
    \[
        r(\bf i, \bf j) = \begin{cases}
            1 & (\bf i, \bf j) = (\bf1, \bf3) \\
            0 & otherwise.
        \end{cases}
    \]
    Moreover, the time spent outside of $\{\bf1, \bf2, \bf3\}$ is negligible; that is,
    \[
        \limb \EE_{\xi}^\beta \left[\int_0^t \one_{\left\{X(s\tb^1) \notin \{\bf1,\bf2,\bf3\}\right\}} ds \right] = 0
    \]
    for every $\xi \in \XX$ and $t > 0$.
\end{theorem}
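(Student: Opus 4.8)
The plan is to apply the Markov chain model reduction framework of \cite{Beltr_n_2010} with the speed-up scale $\tb^1 = \kappa_1 e^{\beta\Gamma_{13}^\star}$, reducing the full chain $X_\beta$ to the three-point chain on $\{\bf1, \bf2, \bf3\}$. By Theorem \ref{thm:stab2}, the only states with stability level at least $\mc V_{\bf1} = \Gamma_{13}^\star$ are $\bf1$ and $\bf2$, while $\bf3 = \bf{gs}$ is absorbing on this time scale; every other state is a shallow valley of depth strictly below $\Gamma_{13}^\star$. First I would invoke Proposition \ref{prop:neg} (the negligibility criterion for Metropolis dynamics) to discard all states outside $\{\bf1,\bf2,\bf3\}$: this is precisely what yields the last displayed identity, that the time spent outside $\{\bf1,\bf2,\bf3\}$ under $\tb^1$ vanishes, and it simultaneously reduces the computation of transition rates to computing capacities between $\bf1$, $\bf2$, $\bf3$.

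Next I would identify the limiting jump rates $r(\bf i,\bf j)$. Following \cite{Beltr_n_2010}, $r(\bf i,\bf j) = \lim_{\beta} \frac{1}{\tb^1}\, \mu_\beta(\bf i)^{-1}\, \mathrm{cap}_\beta(\bf i, \{\bf j\} \cup (\text{other wells}))\,\times(\text{harmonic weight})$; concretely the rate out of $\bf1$ is governed by $\mathrm{cap}_\beta(\bf1, \{\bf2,\bf3\})$ and the rate out of $\bf2$ by $\mathrm{cap}_\beta(\bf2,\{\bf1,\bf3\})$. By Theorem \ref{thm:gate}, $\mc G_{13} = \dect 3 1 {\ell_c^{13}+1}{\ell_c^{13}}$ is the minimal gate for all three of $\bf1\to\bf3$, $\bf1\to\bf2$, $\bf1\to\bf2\cup\bf3$, so the saddle structure out of $\bf1$ is exactly that of the Ising model between $\bf1$ and $\bf3$, giving $\mathrm{cap}_\beta(\bf1,\{\bf2,\bf3\}) = (1+o_\beta(1))\,\kappa_1^{-1}\mu_\beta(\bf1)\, e^{-\beta\Gamma_{13}^\star}\cdot(\text{const})$ via the standard Dirichlet/Thomson variational estimate on the critical droplet (the $4(2\ell_c^{13}-1)$ and the factor $3$ — from the three spin choices at the boundary — being absorbed into $\kappa_1$). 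This forces the rate out of $\bf1$ to be $1$. For the splitting, Proposition \ref{prop:pot_zero} gives $\PP_{\bf1}^\beta[\tau_{\bf2} < \tau_{\bf3}] \to 0$, so the entire exit mass from $\bf1$ goes to $\bf3$: $r(\bf1,\bf2) = 0$, $r(\bf1,\bf3) = 1$. For $\bf2$, the exit scale is $e^{\beta\Gamma_{23}^\star} \gg \tb^1$ by Theorem \ref{thm:EK}, hence $\frac{1}{\tb^1}\mathrm{cap}_\beta(\bf2,\{\bf1,\bf3\}) = (1+o_\beta(1))\,\frac{\kappa_2}{\kappa_1}\, e^{-\beta(\Gamma_{23}^\star - \Gamma_{13}^\star)} \to 0$ since $\Gamma_{23}^\star > \Gamma_{13}^\star$, so $r(\bf2,\bf1) = r(\bf2,\bf3) = 0$ and $\bf2$ is inert on this scale; in particular the two wells $\bf1, \bf2$ do not communicate in the limit and no issue of recurrence or order-of-visits arises. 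Finally one checks the convergence-of-finite-dimensional-distributions and tightness hypotheses of \cite[Definition 2.2]{Beltr_n_2010} (equivalently, verifies the conditions of their main theorem, e.g. \textbf{(H0)}–\textbf{(H2)} or the equivalent condition on the trace chain), which reduce to the capacity asymptotics just established together with the standard estimate $\mu_\beta(\{\bf1,\bf2,\bf3\})/\mu_\beta(\XX) \to 1$ arising from the fact that all other configurations have strictly higher energy than at least one monochromatic state outside the relevant droplets — made rigorous by a partition of $\XX$ into the three basins of attraction modulo the negligible set.

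The main obstacle is the negligibility step: establishing that the sojourn time outside $\{\bf1,\bf2,\bf3\}$ vanishes uniformly in the starting point $\xi$, and simultaneously that shallow wells do not distort the limiting rates. This is exactly the content of Proposition \ref{prop:neg}, but applying it here requires verifying its hypotheses in our multi-scale setting — in particular that \emph{every} state $\sigma \notin \{\bf1,\bf2,\bf3\}$ has $\mc V_\sigma < \mc V_{\bf1}$, which is Theorem \ref{thm:stab2}, and that the relevant depth separation holds uniformly. A secondary subtlety is that because $\tb^1$ is the \emph{smaller} of the two natural scales, $\bf2$ appears as a genuine (non-negligible, deep) well that is nonetheless unreachable in the limit — one must be careful that the reduced chain is well-posed as a chain on all of $\{\bf1,\bf2,\bf3\}$ with $\bf2$ an isolated (absorbing, rate-zero) state, rather than attempting to exclude $\bf2$ from the reduced space. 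Once Proposition \ref{prop:neg} and the capacity estimates from Theorems \ref{thm:gate} and \ref{thm:EK} are in hand, the remaining verification of the \cite{Beltr_n_2010} hypotheses is routine.
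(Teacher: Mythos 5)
Your proposal follows essentially the same route as the paper: negligibility via Proposition \ref{prop:neg} (justified by Theorem \ref{thm:stab2}), and convergence of the trace-process rates via the Beltr\'an--Landim capacity formulas, using the capacity asymptotics coming from the gate structure of Theorem \ref{thm:gate} (the paper packages these as Proposition \ref{prop:cap}) together with Proposition \ref{prop:pot_zero} to kill the $\bf1\to\bf2$ channel and the scale separation $\Gamma_{23}^\star > \Gamma_{13}^\star$ to freeze $\bf2$. The argument is correct and matches the paper's proof in structure and in all key inputs.
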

A similar result holds for speed-up with rate $\tb^2$. But the crucial difference is that the state \bf1 is not observed.
\begin{theorem}[Markov chain model reduction]
    \label{thm:MCMR2}
    The speeded-up hidden Markov chain $\td X_\beta(t) = \Phi(X_\beta(\tb^2 t))$ converges to the Markov chain on $\{\bf2, \bf3\}$ with a jump rate
    \[
        r(\bf2, \bf3) = 1, \quad r(\bf3, \bf2) = 0.
    \]
    Moreover, the time spent outside of $\{\bf2, \bf3\}$ is negligible; that is,
    \[
        \limb \EE_{\xi}^\beta \left[\int_0^t \one_{\left\{X(s\tb^2) \notin \{\bf2, \bf3\}\right\}} ds \right] = 0
    \]
    for every $\xi \in \XX$ and $t > 0$.
\end{theorem}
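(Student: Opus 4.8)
The plan is to follow the general recipe of \cite{Beltr_n_2010}, the same way Theorem \ref{thm:MCMR1} is proved, but with the state $\mathbf 1$ now playing the role of a negligible (shallow) well relative to the time scale $\tb^2$. Recall that to identify the reduced chain on a set of ``valleys'' one needs (i) the correct ordering of capacities/depths, (ii) convergence of the rescaled capacities to the jump rates of the limiting chain, and (iii) the negligibility of the time spent outside the chosen recurrent set. Here the candidate recurrent set is $\{\mathbf 2, \mathbf 3\}$, the time scale is $\tb^2 = \kappa_2 e^{\beta\Gamma_{23}^\star}$, and the claimed limit is the chain with a single one-way jump $\mathbf 2 \to \mathbf 3$ at rate $1$. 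First I would record, from Theorems \ref{thm:stab}, \ref{thm:stab2} and \ref{thm:gate}, that $\mathcal V_{\mathbf 2} = \Gamma_{23}^\star$ is the strictly largest stability level, that $\mathcal V_{\mathbf 1} = \Gamma_{13}^\star < \Gamma_{23}^\star$, and that every other configuration has stability level strictly below $\mathcal V_{\mathbf 1}$; in particular $\mathbf 1$ is, on the $\tb^2$ scale, just another shallow well and should be absorbed into the ``$[\beta]$'' class.

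The main computation is the capacity estimate. By the standard potential-theoretic formula (as already used for Theorem \ref{thm:EK}), $\EE_{\mathbf 2}^\beta[\tau_{\mathbf 3}]$ and hence the effective rate $\mathbf 2 \to \mathbf 3$ is governed by $\mu_\beta(\mathbf 2)^{-1}\,\mathrm{cap}_\beta(\mathbf 2,\mathbf 3)$, and Theorem \ref{thm:gate} says the minimal gate $\mc G_{23} = \dect 3 2 {\ell_c^{23}+1}{\ell_c^{23}}$ between $\mathbf 2$ and $\mathbf 3$ is exactly the Ising one, with a \emph{sharp} (single-configuration-type) saddle. This gives $\mathrm{cap}_\beta(\mathbf 2, \{\mathbf 1,\mathbf 3\}) = \mathrm{cap}_\beta(\mathbf 2,\mathbf 3)(1+o_\beta(1))$ — because any path from $\mathbf 2$ toward $\mathbf 1$ must also cross height $\Phi_{23}^\star$ (indeed $\Phi(\mathbf 2,\mathbf 1)=\Phi(\mathbf 2,\mathbf 3)$ and the gate for $\mathbf 2 \to \mathbf 1\cup\mathbf 3$ is $\mc G_{23}$, by Theorem \ref{thm:gate}) — and the normalization by $\kappa_2$ is chosen precisely so that the limiting rate is $1$. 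Since $H(\mathbf 3) < H(\mathbf 2)$ and $\mathbf 3 \in I_{\mathbf 2}$ with no return well of comparable depth, the reverse rate $\mathbf 3 \to \mathbf 2$ vanishes in the limit: a path from $\mathbf 3$ must climb $\Gamma(\mathbf 3,\mathbf 2) = \Phi(\mathbf 2,\mathbf 3)-H(\mathbf 3) = \Gamma_{23}^\star + (H(\mathbf 2)-H(\mathbf 3)) > \Gamma_{23}^\star$, so $e^{\beta\Gamma_{23}^\star}\,\mu_\beta(\mathbf 3)^{-1}\mathrm{cap}_\beta(\mathbf 3,\mathbf 2) \to 0$. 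This is the data needed to apply the model-reduction theorem of \cite{Beltr_n_2010} on the set $\{\mathbf 2,\mathbf 3\}$ and obtain $r(\mathbf 2,\mathbf 3)=1$, $r(\mathbf 3,\mathbf 2)=0$.

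For the negligibility statement — that $\EE_\xi^\beta\!\big[\int_0^t \one\{X(s\tb^2)\notin\{\mathbf 2,\mathbf 3\}\}\,ds\big]\to 0$ — I would invoke Proposition \ref{prop:neg}, the general negligibility criterion quoted in the excerpt. The hypothesis to check is that every state outside $\{\mathbf 2,\mathbf 3\}$ has stability level strictly smaller than the depth $\mathcal V_{\mathbf 2}=\Gamma_{23}^\star$ associated to the $\tb^2$ speed-up; this is exactly Theorem \ref{thm:stab2} together with $\mathcal V_{\mathbf 1}=\Gamma_{13}^\star<\Gamma_{23}^\star$. The one point deserving care is that $\mathbf 1$ is a genuinely deep well (depth $\Gamma_{13}^\star$), so although the \emph{fraction} of time at $\mathbf 1$ on the $\tb^2$ clock tends to $0$, one must argue it does so uniformly in the starting point; this follows because from any $\xi$ the chain reaches $\{\mathbf 2,\mathbf 3\}$ in time $O(e^{\beta(\Gamma_{13}^\star+\eps)}) = o(\tb^2)$ with high probability (by Theorem \ref{thm:LDP}-type estimates and the stability bounds), and once in $\{\mathbf 2,\mathbf 3\}$ excursions outside have length $o(\tb^2)$. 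I expect this uniform-in-$\xi$ control of the short-but-deep well $\mathbf 1$ to be the main obstacle; everything else is a direct transcription of the $\tb^1$ argument with $\mathbf 2$ and $\mathbf 3$ in place of $\mathbf 1$ and $\mathbf 3$.
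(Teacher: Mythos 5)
Your plan follows the paper's proof essentially verbatim: the jump rates are obtained from the two-state trace process on $\{\bf2,\bf3\}$ via $\capa(\bf2,\bf3)/\mu_\beta(\bf2)$ and $\capa(\bf3,\bf2)/\mu_\beta(\bf3)$ together with Proposition \ref{prop:cap} (the reverse rate vanishing for exactly the reason you give, $\Gamma(\bf3,\bf2)>\Gamma_{23}^\star$), and the negligibility statement is precisely Proposition \ref{prop:neg} applied with $\mc M=\{\bf2,\bf3\}$, whose hypothesis is Theorem \ref{thm:stab2} together with $\mc V_{\bf1}<\mc V_{\bf2}$ — in particular the uniform-in-$\xi$ control of the deep-but-shallower well $\bf1$ that you flag as the main obstacle is already built into that proposition and needs no separate argument. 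One small correction: $\Phi(\bf2,\bf1)=\Phi_{13}^\star>\Phi_{23}^\star=\Phi(\bf2,\bf3)$ by Theorem \ref{thm:gamma} and Lemma \ref{lem:app2}, not equal as you assert parenthetically; the strict inequality only strengthens your comparison $\capa(\bf2,\bf1\cup\bf3)\sim\capa(\bf2,\bf3)$ (which, for the two-state trace rates, is not even needed), so nothing in your argument breaks.
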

Note that the projection $\Phi$ is a projection to $\{\bf1, \bf2, \bf3\}$ for both propositions. This means that \bf1 is not observed when the process is speeded up by $\tb^2$. The proof of Theorems \ref{thm:MCMR1} and \ref{thm:MCMR2} is discussed in Subsection \ref{subsec:4.3}.

\begin{remark}
    In this paper, we focus on the case $q = 3$. The reason we consider the three-spin case only is because the energy landscape gets more complicated when $q \ge 4$. To be precise, the difficulty lies in proving Theorem \ref{thm:stab2}. In the proof we present, there are some exceptional cases as depicted in Figure \ref{fig:windmill}-(a) that we have to handle one by one. There are only a few exceptional cases when $q=3$, but the possibility increases exponentially as $q$ grows, making the model unwieldy.
\end{remark}

\section{Pathwise Approaches and Proofs}
The remaining part of the paper is devoted to the proofs of our main results. In the proofs, two methodologies are mainly used; the pathwise approach and the potential theory. We divide our main results into two groups based on the methodology used and prove them separately in the following sections.

In this section, we briefly recall notations and main results of pathwise approaches. Using this, we prove the results in Subsection \ref{subsec:1} and \ref{subsec:2}.
\subsection{Notations and preliminaries}
\label{subsec:pathwise}
The definitions and results of the pathwise approach are mainly from \cite{Nardi_2015}. For a nonempty subset $A \subseteq \XX$, define the \emph{bottom} of $A$ as
\[
    \FF(A) = \{\sigma \in A :\, H(\sigma) = \min_{\eta \in A} H(\eta)\}.
\]
Also, define its \emph{external boundary} by
\[
    \partial A = \{\sigma \notin A: \exists \eta \in A, \, c_\beta(\eta, \sigma) > 0\}.
\]
A nonempty subset $C \subseteq \XX$ is called a \emph{cycle} if it is either a singleton or it is a connected set such that
\[
    \max_{\eta \in C}H(\eta) < H(\FF(\partial C)).
\]
The depth of a cycle $C$ is defined as
\[
    \Gamma(C) = \left[H(\FF(\partial C)) - H(\FF(C))\right]_+.
\]
The following properties are known about the exit distribution of a cycle. The proof can be found in \cite[Theorem 6.23, Corollary 6.25]{olivieri2005large}.
\begin{proposition}
The following statements hold.
    \label{prop:exit}
    \,
    \begin{enumerate}[(a), leftmargin=*]
        \item There exists $k > 0$ such that for all $\beta$ sufficiently large,
        \[
            \sup_{\eta\in C}\PP_\eta^\beta\left[X_\beta\left(\tau_{\partial C}\right)\notin \FF(\partial C)\right] \le e^{-k\beta}.
        \]
        \item Let $\sigma \in \partial C$. For any $\ee > 0$, for all $\beta$ sufficiently large,
        \[
            \inf_{\eta\in C}\PP_\eta^\beta\left[X_\beta(\tau_{\partial C}) = \sigma \right] \ge e^{-\beta(H(\sigma) - H(\FF(C)) + \ee)}.
        \]
    \end{enumerate}
\end{proposition}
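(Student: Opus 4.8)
Both statements are classical facts about the exit from a cycle; we sketch a derivation, the substantive cycle inputs being taken from \cite[Chapter~6]{olivieri2005large}. We use two such inputs, valid for $\beta$ large: (I) $\sup_{\xi\in C}\EE^\beta_\xi[\tau_{\partial C}]\le e^{\beta(\Gamma(C)+\ee)}$; and (II) for every $\xi_0\in C$ there is $c(\xi_0)>0$ with $\sup_{\eta\in C}\PP^\beta_\eta[\tau_{\xi_0}\ge\tau_{\partial C}]\le e^{-c(\xi_0)\beta}$, i.e.\ a fixed interior point of $C$ is visited before $C$ is left with probability exponentially close to $1$ (applied to $\xi_0\in\FF(C)$ this is the familiar ``bottom visited before exit'').

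\emph{Proof of (b).} Fix $\sigma\in\partial C$ and pick a neighbour $\xi_0\in C$ of $\sigma$, which exists because $\sigma\in\partial C$. As $\xi_0\in C$ we have $H(\xi_0)<H(\FF(\partial C))\le H(\sigma)$, and since $G$ has degree $4$ the probability that the chain, once at $\xi_0$, makes its next jump to $\sigma$ equals
\[
    \frac{c_\beta(\xi_0,\sigma)}{\textstyle\sum_{\eta\sim\xi_0}c_\beta(\xi_0,\eta)}\ \ge\ \tfrac14\,e^{-\beta(H(\sigma)-H(\xi_0))}\ \ge\ \tfrac14\,e^{-\beta(H(\sigma)-H(\FF(C)))},
\]
using $H(\xi_0)\ge H(\FF(C))=\min_{C}H$. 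Such a jump lands in $\partial C$ while the chain was still in $C$, hence it is the exit jump. By the strong Markov property at $\tau_{\xi_0}$ together with (II),
\[
    \inf_{\eta\in C}\PP^\beta_\eta[X_\beta(\tau_{\partial C})=\sigma]\ \ge\ \big(1-e^{-c(\xi_0)\beta}\big)\cdot\tfrac14\,e^{-\beta(H(\sigma)-H(\FF(C)))}\ \ge\ e^{-\beta(H(\sigma)-H(\FF(C))+\ee)}
\]
for $\beta$ large, which is (b).

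\emph{Proof of (a).} Using (II) to restart at the bottom, it suffices to bound $\PP^\beta_{\eta_*}[X_\beta(\tau_{\partial C})=\sigma']$ uniformly over $\eta_*\in\FF(C)$ and $\sigma'\in\partial C\setminus\FF(\partial C)$, since for any $\eta\in C$ one has $\PP^\beta_\eta[X_\beta(\tau_{\partial C})=\sigma']\le e^{-c\beta}+\max_{\eta_*\in\FF(C)}\PP^\beta_{\eta_*}[X_\beta(\tau_{\partial C})=\sigma']$. As the chain stays in $C$ up to $\tau_{\partial C}$, exactly one jump into $\sigma'$ occurs before that time, whence
\[
    \PP^\beta_{\eta_*}[X_\beta(\tau_{\partial C})=\sigma']=\sum_{\xi\in C}G_C(\eta_*,\xi)\,c_\beta(\xi,\sigma'),\qquad G_C(\eta_*,\xi):=\EE^\beta_{\eta_*}\Big[\int_0^{\tau_{\partial C}}\one_{\{X_\beta(s)=\xi\}}\,ds\Big].
\]
By reversibility of the chain killed on $\partial C$, $G_C(\eta_*,\xi)=\tfrac{\mu_\beta(\xi)}{\mu_\beta(\eta_*)}G_C(\xi,\eta_*)\le e^{-\beta(H(\xi)-H(\FF(C)))}\,\EE^\beta_\xi[\tau_{\partial C}]\le e^{-\beta(H(\xi)-H(\FF(C)))}e^{\beta(\Gamma(C)+\ee)}$ by (I), while $c_\beta(\xi,\sigma')=e^{-\beta(H(\sigma')-H(\xi))}$ because $H(\xi)<H(\FF(\partial C))\le H(\sigma')$. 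Multiplying, the terms in $H(\xi)$ cancel and
\[
    \PP^\beta_{\eta_*}[X_\beta(\tau_{\partial C})=\sigma']\ \le\ |C|\,e^{\beta(\Gamma(C)+\ee)}\,e^{-\beta(H(\sigma')-H(\FF(C)))}.
\]
Finally $H(\sigma')-H(\FF(C))=(H(\sigma')-H(\FF(\partial C)))+\Gamma(C)\ge\Gamma(C)+\delta$ with $\delta:=\min\{H(\sigma'')-H(\FF(\partial C)):\sigma''\in\partial C\setminus\FF(\partial C)\}>0$, so the bound is at most $|C|\,e^{-\beta(\delta-\ee)}$; choosing $\ee<\delta/2$ and summing over the finitely many $\sigma'$ yields $\sup_{\eta\in C}\PP^\beta_\eta[X_\beta(\tau_{\partial C})\notin\FF(\partial C)]\le e^{-k\beta}$.

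The main obstacle is not in these two reductions, which are short, but in the cycle inputs (I) and (II): the exit time of a cycle is of exponential order $e^{\beta\Gamma(C)}$, and the chain visits the interior of a cycle before leaving it. These are precisely the results developed in \cite[Chapter~6]{olivieri2005large}, which we cite rather than reprove.
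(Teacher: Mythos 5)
Your argument is correct in substance, but note that the paper does not prove this proposition at all: it simply quotes it as \cite[Theorem 6.23, Corollary 6.25]{olivieri2005large}, which are precisely statements (a) and (b). What you do instead is derive (a) and (b) from two more primitive cycle estimates, also taken from \cite{olivieri2005large} (the $e^{\beta(\Gamma(C)+\ee)}$ bound on the exit time and the fact that any fixed point of $C$ is visited before exit with probability exponentially close to $1$), via a last-jump argument for the lower bound and a Green's-function/reversibility computation for the upper bound. That reduction is sound: the identity $\PP_{\eta_*}[X_\beta(\tau_{\partial C})=\sigma']=\sum_{\xi\in C}G_C(\eta_*,\xi)c_\beta(\xi,\sigma')$, the symmetry $\mu_\beta(\eta_*)G_C(\eta_*,\xi)=\mu_\beta(\xi)G_C(\xi,\eta_*)$ for the killed chain, and the cancellation of $H(\xi)$ all check out, so your route buys an actual derivation at the cost of still importing the two hard inputs from the same source; the paper's route is just a citation. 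Two small points to fix: the factor $\tfrac14$ in (b) is justified by ``$G$ has degree $4$'', but the relevant graph is the configuration space, where each state has $2|V|$ neighbours (one site flipped to one of the two other spins), so the constant should be $\tfrac{1}{2|V|}$ --- harmless, since it is absorbed into $e^{-\beta\ee}$, but the stated reason is wrong. Second, both parts use $H(\xi)<H(\FF(\partial C))$ for $\xi\in C$, which is the defining inequality of a \emph{non-singleton} cycle; for trivial (singleton) cycles that are not local minima the estimates as stated can fail, so you should make explicit that the argument (like the cited theorems) concerns nontrivial cycles, which is the only case used in the paper.
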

For a nonempty subset $A \subseteq \XX$ and $\eta \in \XX \backslash A$, define the initial cycle $C_A(\eta)$ by
\[
    C_A(\eta) = \{\eta\} \cup \{\sigma \in \XX: \Phi(\eta, \sigma) < \Phi(\eta, A)\}.
\]
In fact, the initial cycle is the maximal cycle containing $x$ and disjoint to $A$. For fixed $A \subseteq \XX$, we may observe that initial cycles are either disjoint or are the same. This gives a partition of $\XX \setminus A$ into maximal cycles. We denote it as
\[
    \mc M (\XX \backslash A ) = \{C_A(\eta)\}_{\eta \in \XX \backslash A}.
\]

Now we define a typical path from $\eta$ and $A$. This is done by determining which transitions between maximal cycles are the most likely ones. It will appear that the typical path provides a bound for hitting time from $\eta$ to $A$.

A \emph{cycle-path} is a finite sequence of cycles $C_1, \ldots, C_m$ such that
\[
    C_i \cap C_{i+1} = \emptyset , \quad \partial C_i \cap C_{i+1} \neq \emptyset
\]
for $i = 1, \ldots, m - 1$. Note that given a path $\gamma = (\gamma_i)_{i=1}^m$ from $x$ to $A$, there is a corresponding cycle-path $(C_A(\gamma_i))_{i=1}^m$.

We say that a cycle-path $(C_1, \ldots, C_m)$ is \emph{connected via typical jumps to $A$}, or simply \emph{vtj-connected} to $A$ if
\[
    \FF(\partial C_i) \cap C_{i+1} \neq \emptyset, \quad \FF(\partial C_m) \cap A \neq \emptyset.
\]
A path is called \emph{typical} if its corresponding cycle-path is vtj-connected.
A union of all vtj-connected cycle-path from $\eta$ to $A$ is called the \emph{typical tube} $T_A(\eta)$. It is called typical in the sense of the next lemma, which is found in \cite[Lemma 3.13]{Nardi_2015}.
\begin{lemma}
    \label{lem:tube}
    Let $\eta \in \XX \setminus A$. There exists $k > 0$ such that for $\beta$ sufficiently large,
    \[
        \PP_\eta^\beta\left[\tau_{\partial T_A(\eta)} \le \tau_A \right] \le e^{-k\beta}.
    \]
\end{lemma}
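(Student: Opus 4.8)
The plan is to reduce the statement to the exit estimate Proposition \ref{prop:exit}(a), applied along the cycle decomposition of the tube; this is the content of \cite[Lemma 3.13]{Nardi_2015}, and I outline the argument. First I would record the structure of $T_A(\eta)$. Since $\XX$ is finite, $\mc M(\XX\setminus A)$ is a \emph{finite} partition of $\XX\setminus A$ into maximal cycles, and $T_A(\eta)$ is a union of some of them, say $T_A(\eta)=C^{(1)}\cup\cdots\cup C^{(r)}$ with $C^{(1)}=C_A(\eta)$. Two facts about this decomposition are needed. First, for every maximal cycle $C\subseteq\XX\setminus A$ the set $\FF(\partial C)$ is nonempty and disjoint from $C$, so it meets either $A$ or some $C'\in\mc M(\XX\setminus A)$ with $C'\neq C$; such a $C'$ is by definition a \emph{typical successor} of $C$, and correspondingly $C_A(\sigma)$ is a typical successor of $C$ for each $\sigma\in\FF(\partial C)\setminus A$. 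Second, the typical-successor relation on $\mc M(\XX\setminus A)$ is \emph{acyclic}: if $C_1\to C_2\to\cdots\to C_m\to C_1$ were a directed loop, one picks a vertex of maximal energy in $C_1\cup\cdots\cup C_m$, lying in some $C_j$, and follows the typical jump out of $C_j$, which lands at a vertex of $\FF(\partial C_j)$ sitting inside $C_{j+1}$ and hence of energy at most $\max_i\max_{C_i}H$, whereas $H(\FF(\partial C_j))$ strictly exceeds $\max_{C_j}H$ when $C_j$ is not a singleton, a contradiction; the singleton cases require a little extra bookkeeping on which cycles are entered at their bottoms and are handled as in \cite{Nardi_2015}. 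Combining these, any chain of typical jumps started from $C^{(1)}$ reaches $A$ after at most $r$ steps and never leaves $T_A(\eta)\cup A$; in particular $\FF(\partial C^{(i)})\subseteq T_A(\eta)\cup A$ for each $i$, i.e.\ the tube is closed under typical exits.

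Next I would push the Markov chain through this decomposition. Starting from $\eta\in C^{(1)}$, define $0=s_0<s_1<s_2<\cdots$ by letting $s_{n+1}$ be the first time after $s_n$ that the chain leaves the maximal cycle containing $X_\beta(s_n)$; at time $s_{n+1}$ the chain sits in $A$ or in exactly one maximal cycle. By Proposition \ref{prop:exit}(a), taking the minimum of the finitely many constants supplied for the maximal cycles, there is $k>0$ such that for all large $\beta$ the ``atypical exit'' event $\{X_\beta(\tau_{\partial C})\notin\FF(\partial C)\}$ has probability at most $e^{-k\beta}$, uniformly over the starting point in $C$ and over $C\in\mc M(\XX\setminus A)$. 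On the complement of this event at every step, the exit point lies in $\FF(\partial C)\subseteq T_A(\eta)\cup A$ by the closure property above, so the chain stays in $T_A(\eta)$ until it hits $A$; moreover, since every step then moves to a typical successor and the successor relation is acyclic, at most $r$ distinct maximal cycles are visited before $A$ is reached. Hence, by a union bound over these at most $r$ steps,
\[
    \PP_\eta^\beta\bigl[\tau_{\partial T_A(\eta)}\le\tau_A\bigr]\;\le\; r\cdot\max_{C\in\mc M(\XX\setminus A)}\;\sup_{\zeta\in C}\PP_\zeta^\beta\bigl[X_\beta(\tau_{\partial C})\notin\FF(\partial C)\bigr]\;\le\; r\,e^{-k\beta},
\]
which is bounded by $e^{-k'\beta}$ for any fixed $k'<k$ once $\beta$ is large enough, proving the lemma.

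The main obstacle is the structural input: the acyclicity of the typical-jump relation and the precise statement that the tube is closed under typical exits, in particular the treatment of singleton maximal cycles, where the defining strict inequality $\max_C H<H(\FF(\partial C))$ is not available and one must instead track which cycles are entered at their bottoms. These are exactly the ingredients established in \cite[Lemma 3.13]{Nardi_2015} (and the surrounding discussion there), which I would cite for the details rather than reproduce; once they are in hand, the probabilistic part is the elementary union bound displayed above.
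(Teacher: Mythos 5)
The paper does not prove this lemma itself; it simply cites \cite[Lemma 3.13]{Nardi_2015}, and you defer to the same reference for the hard structural inputs (acyclicity of the typical-jump relation and closure of the tube under typical exits, including the singleton-cycle bookkeeping). Your reconstruction of the probabilistic step --- iterating Proposition~\ref{prop:exit}(a) through the finite cycle decomposition of the tube and taking a union bound over the at most $r$ distinct cycles visited on the typical-exit event --- is sound and matches the standard argument behind the cited result, so your approach agrees with the paper's.
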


\subsection{Energy landscape}
\label{subsec:land}
We begin by introducing the \emph{projection operator}, originally introduced in \cite{kim2022potts}, which will be useful when examining the energy landscape.
\begin{lemma}[Projection]
\label{lem:proj}
    Let $\sigma \in \mathcal{X}$ be a configuration. Let $\mathcal{P}_{ij}\sigma$ be the configuration obtained by substituting $i$ spins in $\sigma$ to $j$ spins. Then the following inequality holds.
    \begin{align*}
        H(\mathcal{P}_{12}\sigma) \le H(\sigma), \\
        H(\mathcal{P}_{13}\sigma) \le H(\sigma), \\
        H(\mathcal{P}_{23}\sigma) \le H(\sigma).
    \end{align*}
    Moreover, each equality holds if and only if $\mathcal{P}_{ij}\sigma = \sigma$ for corresponding $i, j$.
\end{lemma}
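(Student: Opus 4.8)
The plan is to prove the three inequalities by a single unified argument, since they are structurally identical: for distinct spins $i,j$ with the ordering-independent role, applying $\mathcal P_{ij}$ recolors every $i$-site to $j$. I would compute the change in Hamiltonian $H(\mathcal P_{ij}\sigma) - H(\sigma)$ directly from the defining formula \eqref{eq:hamiltonian} (with $J=1$), separating the edge (interaction) contribution from the external-field contribution, and show each is $\le 0$ under the relevant hypothesis on the $h$'s.

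For the external-field term: recoloring $i\to j$ replaces a contribution $-h_i$ by $-h_j$ at each formerly-$i$ site, for a total change $-(h_j - h_i)\cdot\#\{x: \sigma(x)=i\}$. For $\mathcal P_{12}$ (so $i=1,j=2$) this is $\le 0$ since $h_2 > h_1$; for $\mathcal P_{13}$ and $\mathcal P_{23}$ likewise since $h_3 > h_1$ and $h_3 > h_2$. So in all three cases the field term only decreases the energy (strictly, if any $i$-site exists). For the edge term: I would classify each edge $\{x,y\}$ by the pair of spins it carried in $\sigma$. Edges internal to the $i$-region and edges internal to the $j$-region keep their $\one_{\{\sigma(x)=\sigma(y)\}}$ value (an $i$–$i$ edge becomes a $j$–$j$ edge, still satisfied). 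An edge between the $i$-region and the $j$-region was unsatisfied before and becomes satisfied after, so $-\one$ strictly decreases. An edge touching a site of the third spin $k \notin\{i,j\}$: if the other endpoint was $j$, it stays unchanged; if the other endpoint was $i$, it was unsatisfied ($i\neq k$) and stays unsatisfied ($j\neq k$), so no change. Summing, the edge-term change is $-\#\{i\text{–}j\text{ edges in }\sigma\} \le 0$.

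Hence $H(\mathcal P_{ij}\sigma) - H(\sigma) = -\#\{i\text{–}j\text{ edges}\} - (h_j-h_i)\,\#\{i\text{-sites}\} \le 0$, which gives the inequality. For the equality case: if $\mathcal P_{ij}\sigma \neq \sigma$ then $\sigma$ contains at least one $i$-site, so the field term is strictly negative (since $h_j - h_i > 0$ in each of the three instances), forcing $H(\mathcal P_{ij}\sigma) < H(\sigma)$; conversely if $\mathcal P_{ij}\sigma = \sigma$ the difference is trivially $0$.

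There is essentially no obstacle here — the only thing to be careful about is the bookkeeping of edge types, in particular checking that an edge between an $i$-site and a third-spin $k$-site contributes nothing to the difference, and handling the periodic boundary condition, which is automatic since the argument is purely combinatorial on the given edge set $E$. The strict-inequality direction of the equality claim relies only on the strict positivity $h_j > h_i$ supplied by the standing assumption $h_3 > h_2 > h_1 > 0$; I would flag that this is where the hypothesis enters, and note that one could equally have obtained strictness from the edge term whenever an $i$–$j$ edge is present, but the field term alone already suffices whenever $\sigma$ has any $i$-site at all.
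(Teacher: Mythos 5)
Your proof is correct and takes essentially the same approach as the paper's (the paper merely states in one line that the projection reduces the number of differing-spin adjacent pairs and decreases the per-cell field energy, whereas you work out the edge bookkeeping and the equality case explicitly).
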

\begin{proof}
    Projection reduces the number of adjacent cells with different spins. Also, the energy of each cell decreases along the given projection, so we have the desired inequalities.
\end{proof}

Now we prove Theorem \ref{thm:gamma}.
\begin{proof}[Proof of Theorem \ref{thm:gamma}]
    Consider a path $\gamma$ from \textbf{1} to \textbf{3}. The projected path $\mc P_{23} \gamma$ gives a path in the Ising model with spins of type 1 and 3, of which the communication height between \bf1 and \bf3 is $H(1) + \Gamma_{13}^\star$ as described in Appendix \ref{app:a1}. Together with Lemma \ref{lem:proj}, we have
    \[
        \sup_{\sigma \in \gamma} H(\sigma) \ge \sup_{\sigma \in \gamma} H(\mc P_{23}(\sigma)) \ge H(\bf1) + \Gamma_{13}^\star.
    \]
    Hence we obtain $\Gamma(\bf1, \bf3) \ge \Gamma_{13}^\star$. Projection by $\mathcal{P}_{13}$ similarly gives $\Gamma(\bf2, \bf3) \ge \Gamma_{23}^\star$. On the other hand, consider the \emph{reference path} of the Ising model described in Appendix \ref{app:a1}. Assumption A ensures that such path exists in our model. This gives an exact equality for the previous inequality. Thus, we obtain
    \[
        \Gamma(\bf1, \bf3) \le \Gamma_{13}^\star, \quad \Gamma(\bf2, \bf3) \le \Gamma_{23}^\star.
    \]

    Finally, it remains to compute $\Gamma(\bf1, \bf2)$. Projecting a path from \bf1 to \bf2 by $\mc P_{13}$ gives a path $\mc P_{13}\gamma$ from \bf1 to \bf3 in the Ising model with spins of type 1 and 3. By the same argument as above, we have $\Gamma(\mathbf{1}, \mathbf{2}) \ge \Gamma_{13}^\star$. Also, consider a path, denoted by $\gamma_0$, concatenating the reference path of the Ising model between \bf1 and \bf3 with the reference path of the Ising model between \bf3 and \bf2. The communication height of this path is the maximum of the communication height of two reference paths, which is
    \[
        \max(H(\bf1) + \Gamma_{13}^\star, H(\bf2) + \Gamma_{23}^\star) = H(\bf2) + \Gamma_{23}^\star
    \]
    following by the computation in the Lemma \ref{lem:app2}. Therefore, the equality $\Gamma(\mathbf{1}, \mathbf{2}) = \Gamma_{13}^\star$ holds.
\end{proof}

The same argument can be applied to compute $\mc V_1$ and $\mc V_2$.
\begin{proposition}
\label{prop:stab}
    The stability level of \bf 1 and \bf 2 is given by
    \[
        \mathcal{V}_{\bf1} = \Gamma^\star_{13},\quad \mathcal{V}_{\bf2} = \Gamma^\star_{23}.
    \]
\end{proposition}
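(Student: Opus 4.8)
The plan is to prove both identities by sandwiching $\mc V_{\bf1}$ and $\mc V_{\bf2}$ between matching upper and lower bounds, reusing verbatim the projection machinery from the proof of Theorem \ref{thm:gamma}. First note that the interaction term of $H$ is the same on every monochromatic configuration, so $h_3 > h_2 > h_1$ forces $H(\bf3) < H(\bf2) < H(\bf1)$; hence $\bf2, \bf3 \in I_{\bf1}$ and $\bf3 \in I_{\bf2}$. In particular $I_{\bf1}$ and $I_{\bf2}$ are nonempty, so both stability levels are finite.

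For the upper bounds I would use Theorem \ref{thm:gamma} directly. Since $\{\bf2,\bf3\} \subseteq I_{\bf1}$ and $\Phi(\sigma, \cdot)$ is nonincreasing in the target set, $\Phi(\bf1, I_{\bf1}) \le \Phi(\bf1, \{\bf2,\bf3\}) = \min\big(\Phi(\bf1,\bf2),\Phi(\bf1,\bf3)\big) = H(\bf1) + \Gamma_{13}^\star$, so $\mc V_{\bf1} \le \Gamma_{13}^\star$; likewise $\bf3 \in I_{\bf2}$ gives $\Phi(\bf2, I_{\bf2}) \le \Phi(\bf2,\bf3) = H(\bf2) + \Gamma_{23}^\star$, hence $\mc V_{\bf2} \le \Gamma_{23}^\star$.

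For the lower bound on $\mc V_{\bf1}$, take any path $\gamma$ from $\bf1$ to a configuration $\eta$ with $H(\eta) < H(\bf1)$ and project it by $\mc P_{23}$. The projection fixes $\bf1$, and by Lemma \ref{lem:proj} we have $H(\mc P_{23}\eta) \le H(\eta) < H(\bf1)$; moreover $\mc P_{23}\eta \neq \bf1$ because $\eta \neq \bf1$ and $\mc P_{23}$ only merges spins $2$ into $3$, neither of which occurs in $\bf1$. Thus (after deleting repeated consecutive entries) $\mc P_{23}\gamma$ is an admissible path in the Ising model on spins $\{1,3\}$ joining $\bf1$ to a configuration of energy strictly below $H(\bf1)$. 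The standard-case Ising analysis recalled in Appendix \ref{app:a1} tells us that the stability level of the metastable phase $\bf1$ in that Ising model equals $\Gamma_{13}^\star = f(h_3 - h_1)$, so $\max_{\sigma \in \mc P_{23}\gamma} H(\sigma) \ge H(\bf1) + \Gamma_{13}^\star$, and Lemma \ref{lem:proj} upgrades this to $\max_{\sigma \in \gamma} H(\sigma) \ge H(\bf1) + \Gamma_{13}^\star$. Taking the infimum over $\gamma$ and over admissible $\eta$ yields $\mc V_{\bf1} \ge \Gamma_{13}^\star$. The bound $\mc V_{\bf2} \ge \Gamma_{23}^\star$ is obtained the same way, projecting by $\mc P_{13}$ into the Ising model on spins $\{2,3\}$ and noting $\mc P_{13}\eta \neq \bf2$ whenever $\eta \neq \bf2$.

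I do not expect a serious obstacle here: the argument is essentially a copy of the proof of Theorem \ref{thm:gamma}. The only genuinely non-formal input — and the point worth stating carefully — is the Ising fact that any path descending below the energy of the metastable phase must cross the critical barrier $\Gamma^\star$, i.e.\ that the Ising stability level coincides with the communication height to the stable phase; this is where Appendix \ref{app:a1} (together with Assumption B) is invoked. The one bookkeeping subtlety, already handled above, is checking that the projected endpoint still lies in the relevant sublevel set, which reduces to the trivial observation $\mc P_{23}\eta \neq \bf1$ (resp.\ $\mc P_{13}\eta \neq \bf2$).
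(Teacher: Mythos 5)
Your proof is correct and follows the paper's strategy exactly: the lower bound comes from projecting any descending path via $\mc P_{23}$ (resp.\ $\mc P_{13}$) onto the two-spin Ising model and invoking the Ising stability level of the metastable phase, and the upper bound reduces to the reference-path bound already encapsulated in Theorem~\ref{thm:gamma} (which the paper itself points to just before its proof). The minor point you flag about $\mc P_{23}\eta \neq \bf1$ is in fact automatic, since $H(\mc P_{23}\eta) < H(\bf1)$ by Lemma~\ref{lem:proj} already excludes $\mc P_{23}\eta = \bf1$.
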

\begin{proof}[Proof of Theorem \ref{thm:stab}]
    Let $\gamma$ be a path from \bf1 to $\sigma$ with $H(\sigma) < H(\bf1)$. Then the projection by $\mc P_{23}$ gives a path $\mc P_{23}\gamma$ from \bf1 to $\mc P_{23}(\sigma)$ in the Ising model with spins 1 and 3. Also, note that $H(\mc P_{23}(\sigma)) \le H(\sigma) < H(\bf 1)$ by Lemma \ref{lem:proj}. Hence, the energy of the projected path is bounded by the stability level of \bf1 in the Ising model, which is $\Gamma_{13}^\star$. Together, we have
    \[
        \sup_{\sigma \in \gamma} H(\sigma) - H(\bf1) \le \sup_{\sigma \in \gamma} H(\mc P_{23}(\sigma)) - H(\bf1) \le \Gamma_{13}^\star.
    \]
    Taking a minimum of over all possible $\gamma$, we have $\mathcal{V}_{\bf1} \ge \Gamma_{13}^\star$. On the other hand, the reference path of the Ising model between \bf1 and \bf3 gives the equality of the above inequality. Therefore, $\mc V_{\bf1} = \Gamma_{13}^\star$. Similarly, we may apply $\mathcal{P}_{13}$ to obtain $\mathcal{V}_{\bf2} = \Gamma_{23}^\star$.
\end{proof}

\subsection{Proof of stability level}
\label{subsec:stab}
In this subsection, we give a proof of Theorem \ref{thm:stab2}. We clarify that the entire subsection is designed to include the proof of Theorem \ref{thm:stab2}, without declaring the beginning and the end of the proof. We use this structure due to a number of lemmas and cases involved in the proof.

From the previous proposition and Appendix \ref{app:a2}, $\mc V_{\bf 1} < \mc V_{\bf 2}$ immediately follows. Thus, it remains to prove $\mc V_{\sigma} < \mc V_{\bf1}$ for $\sigma \in \XX \backslash \{\bf1, \bf2, \bf3\}$. A simple computation in the Appendix \ref{app:a2} shows that $\mc V_{\bf 1} = \Gamma_{13}^\star > 2$. Thus, it suffices to check the case where $\mathcal{V}_\sigma > 2$.
\begin{figure}
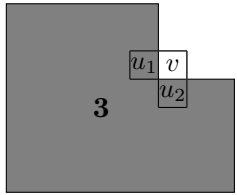

\centering
\ctikzfig{rect}
\caption{\label{fig:rect} Figure for Lemma \ref{lem:rect}. Flipping the spin of $v$ to 3 decreases the energy}
\end{figure}

\begin{lemma}
    \label{lem:rect}
    Suppose that $\mathcal{V}_\sigma > 0$. Then each 3-cluster, a connected component of 3-spins, of $\sigma$ forms a rectangle. Moreover, each rectangle has a distance of at least 2 from the others.
\end{lemma}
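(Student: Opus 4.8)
The plan is to argue by contradiction using local spin flips that strictly decrease the energy, since such a flip shows that $\sigma$ communicates downhill with a lower-energy configuration without going above $H(\sigma)$, forcing $\mathcal V_\sigma = 0$. Concretely, suppose a $3$-cluster of $\sigma$ is \emph{not} a rectangle. Then its boundary (viewed as a polygon in the dual lattice, or equivalently by looking at the bounding box of the cluster) must have a \emph{concave corner}: there is a site $v$ with spin $\neq 3$ having at least two neighbors of spin $3$ that belong to this cluster. Flipping $v$ to $3$ removes at least two unequal-neighbor edges and adds at most two, so the interaction energy does not increase; more precisely, at a concave corner it strictly decreases the number of disagreeing edges by at least $2$ while the external-field term changes by $h_{\sigma(v)} - h_3 \le 0$ since $h_3$ is maximal. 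Hence $H$ strictly decreases, contradicting $\mathcal V_\sigma > 0$. The standard fact that a finite connected subset of $\ZZ^2$ all of whose sites have at most one neighbor outside the set (no concave corner) must be a rectangle then yields the first claim. One should be slightly careful about the periodic boundary: if the cluster wraps around the torus it could be a "strip" rather than a genuine rectangle, but Assumption A guarantees $K, L$ are large enough that a wrapping $3$-cluster would have energy far below $H(\bf 1)$, so this case does not arise among configurations with $\mathcal V_\sigma > 0$ (alternatively one treats a full row/column as a degenerate rectangle and notes it is excluded by the same flip argument applied to its complement).

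For the separation statement, suppose two distinct $3$-rectangles are at $\ell_\infty$-distance $1$, i.e. there is a site $v$ of spin $\neq 3$ adjacent to both. If $v$ is adjacent to (at least) two $3$-sites, the same flip-to-$3$ move as above strictly lowers $H$, again contradicting $\mathcal V_\sigma > 0$. The remaining possibility is that the two rectangles touch only "diagonally," i.e. they occupy opposite corners of a $2\times 2$ block whose other two sites carry non-$3$ spins; here flipping one non-$3$ corner to $3$ trades one agreeing edge for one disagreeing edge in the interaction term and changes the field by $h_{\sigma(v)}-h_3<0$, which is still a net strict decrease because $h_3 < 1$. Either way $\mathcal V_\sigma = 0$, a contradiction, so distinct $3$-rectangles must be at distance at least $2$.

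I expect the main obstacle to be the careful enumeration of the boundary-geometry cases — ensuring that "not a rectangle" really does force the existence of a site with two $3$-neighbors in the \emph{same} cluster (as opposed to two neighbors in two different clusters, which is the distance-$1$ case), and handling the degenerate situations on the torus where a cluster is a full row or column. Everything else is a direct energy bookkeeping computation: each local move changes the interaction term by an even integer and the field term by something in $(-1, 0]$ (using $0 < h_1 < h_2 < h_3 < 1$), so a decrease of $2$ or more in disagreeing edges always wins, and even a decrease of the adjacency count by $1$ wins in the diagonal-touch case since $h_3 < 1$. The hypothesis $\mathcal V_\sigma > 0$ is exactly what lets us convert "there is a downhill local move" into a contradiction, because a single downhill move from $\sigma$ already witnesses $I_\sigma \ne \emptyset$ with $\Phi(\sigma, I_\sigma) = H(\sigma)$.
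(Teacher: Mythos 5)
Your core argument is the same as the paper's: a non-rectangular $3$-cluster has a concave corner, i.e.\ a site $v$ with $\sigma(v)\neq 3$ having at least two $3$-neighbours, and flipping $v$ to $3$ cannot increase the interaction term (it gains at least two agreeing edges and loses at most two) while it strictly lowers the field term because $h_3>h_2>h_1$; the same flip disposes of a non-$3$ site adjacent to two nearby $3$-rectangles, and $\mathcal{V}_\sigma>0$ is contradicted exactly as you say. Two of your bookkeeping claims are wrong but harmless: the number of disagreeing edges need \emph{not} drop by $2$ at a concave corner (the other two neighbours of $v$ may carry the spin $\sigma(v)$, in which case the interaction term is unchanged), and in the diagonal-touch case the flipped corner is adjacent to \emph{both} $3$-rectangles, so it gains two agreements rather than ``trading one for one''; in every case the strict decrease comes from the field term $h_{\sigma(v)}-h_3<0$, which is all that is needed and is what the paper uses.

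The genuine defect is your treatment of clusters that wrap around the torus. The claim that such clusters cannot occur when $\mathcal{V}_\sigma>0$ is false: a monochromatic band of $3$-spins winding around the torus in a sea of $1$-spins is a strict local minimum of $H$ (every single spin flip raises the energy), hence has $\mathcal{V}_\sigma>0$; moreover, having low energy would in any case not force $\mathcal{V}_\sigma=0$ --- energy and stability level are unrelated in this way (the ground state $\mathbf{3}$ has minimal energy and infinite stability level), so the inference ``energy far below $H(\mathbf{1})$, hence excluded'' is a non sequitur, and for a thin band the energy is not even below $H(\mathbf{1})$. The correct resolution is the opposite of exclusion: a wrapping band has no concave corner, so the flip argument does not rule it out, and the paper simply counts such bands as degenerate (``two-sided'') rectangles; they are precisely the configurations treated separately at the beginning of Case 2 in the proof of Theorem~\ref{thm:stab2} (Figure~\ref{fig:strip}). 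Your proof becomes correct once you read ``rectangle'' as including these bands and delete the attempted exclusion; as written, that step fails.
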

\begin{proof}
    If a cell has at least two neighborhoods with 3-spins, flipping its spin to 3 decreases the energy. Consider a 3-cluster $D$ of $\sigma$. Suppose $D$ has internal angle $\frac{3}{2}\pi$ as in Figure \ref{fig:rect} where $u_1, u_2 \in C$, $v \notin C$. Then flipping the spin of $v$ to 3 decreases the energy, so $\mc V_\sigma = 0$. Therefore $D$ must have internal angles $\pi$ or $\frac{1}{2}\pi$, which is a rectangle.

    Similarly, if two rectangles of 3-spins are spaced 2 units apart, we can find a cell with a spin other than 3 that is adjacent to both rectangles. Changing its spin into 3 decreases the energy, so $\mc V_\sigma = 0$, which is a contradiction.
\end{proof}

Now we divide into two cases.

\vspace{5pt}
\noindent
\bf{(Case 1: $\sigma$ has no 3-cluster)} Assuming $\mathcal{V}_\sigma > 0$, we may imitate the proof of Lemma \ref{lem:rect} to show that 2-clusters form a rectangle. Suppose $\sigma$ has both 1 and 2 spins, let $D'$ be a 2-cluster of size $l \times m$.

If $l < \frac{2}{h_2 - h_1}$, we may consecutively flip 2-spins into 1-spins along the length $l$ side of $D'$. This gives a 2-cluster of size $l \times (m - 1)$. Each flip increases the energy by $h_2 - h_1$ except for the last flip which decreases the energy by $2 - (h_2 - h_1)$, and the energy difference in total is $l(h_2 - h_1) - 2 < 0$. So we have $\mathcal{V}_\sigma < (l-1)(h_2 - h_1) < 2$.

If $l > \frac{2}{h_2 - h_1}$, we flip 1-spins into 2-spins along the length $l$ side of $D'$ to obtain a 2-cluster of size $l \times (m + 1)$. The first flip increases the energy by $2 - (h_2 - h_1)$, then it decreases by $h_2 - h_1$. The final energy difference is $2 - l(h_2 - h_1) < 0$, so we have $\mathcal{V}_\sigma < 2$.

\vspace{5pt}
\noindent \bf{(Case 2: $\sigma$ has 3-cluster)} Let $D$ be a 3-cluster. By Lemma \ref{lem:rect}, we may assume $D$ is a rectangle of size $l \times m$.

\begin{figure}
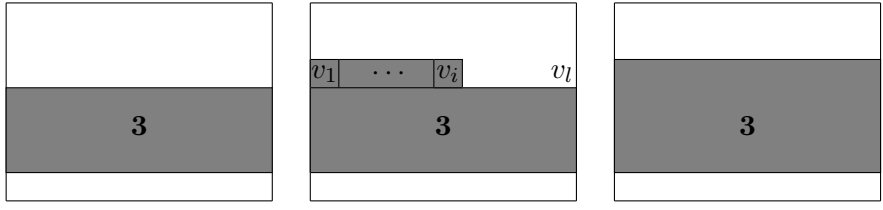

\centering
\ctikzfig{strip}
\caption{\label{fig:strip} Energy decreasing path for a configuration with two-sided 3-cluster. Flipping the cells $v_1, \ldots, v_l$ in order decreases the energy}
\end{figure}

First, we consider the case where $D$ is two-sided as in Figure \ref{fig:strip}. Let $v_1, \ldots, v_l$ be the cells adjacent to one side of $D$. Define the path $\omega = (\omega_0, \ldots, \omega_l)$ by flipping the spins of $v_i$ consecutively to 3. As a result, the width of $D$ increases by one. Denote the spin of $v_i$ in $\sigma$ as $\sigma(v_i)$. Then a simple computation shows that
\[
    H(\omega_i) - H(\omega_0) \le 2, \quad H(\omega_l) - H(\omega_0) = \sum_{i=1}^l (h_{\sigma(v_i)} - h_3) \le 0.
\]
If $H(\omega_l) = H(\omega_0)$, we may repeat the same action which increases the width of $D$. This eventually decreases the energy. Hence $\mathcal{V}_\sigma \le 2$.

Now we may assume that $D$ has four sides. We focus on the neighborhood cells of $D$ including the corners, which are $2(l+m+2)$ cells in total. By Lemma \ref{lem:rect}, these cells have spin 1 or 2. Among $2(l+m+2)$ edges lying between these cells, we focus on the edge where the spin changes. We call them \emph{colored} edges. Formally, we say that the edge is \emph{colored} if two cells containing the edge have different spins.

\begin{figure}
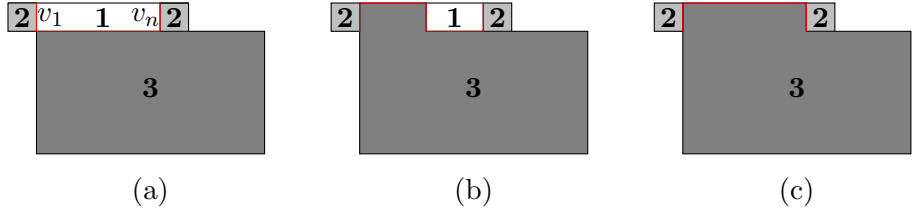

\centering
\ctikzfig{col1}
\caption{\label{fig:col1}Illustration regarding the proof of Lemma \ref{lem:col1}.}
\end{figure}

We claim that Figure \ref{fig:col1}-(a) has a stability level at most 1. Denote by $D$ the rectangular 3-cluster in the configuration.
\begin{lemma}
    \label{lem:col1}
    If two or more colored edge exist on one side of $D$, then $\mathcal{V}_\sigma \le 1$.
\end{lemma}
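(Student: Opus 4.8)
The plan is to exhibit, under the hypothesis that one side of the rectangular $3$-cluster $D$ carries at least two colored edges, an explicit path from $\sigma$ to a configuration of strictly lower energy along which the energy never rises by more than $1$ above $H(\sigma)$. Fix the side of $D$ in question; say it has length $\ell$, and let $v_1, \ldots, v_\ell$ be the cells immediately adjacent to that side, listed in order along the side. By Lemma \ref{lem:rect} each $v_i$ has spin $1$ or $2$, and the assumption says the spin is not constant along $v_1, \ldots, v_\ell$, so there are at least two indices where consecutive $v_i$'s disagree (counting the periodic wrap-around along that row if relevant, but since these are only the $\ell$ cells flanking one side, ``two or more colored edges on that side'' is exactly the statement that the word $\sigma(v_1)\cdots\sigma(v_\ell)$ over the alphabet $\{1,2\}$ has at least two descents/ascents).

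First I would flip the cells $v_1, \ldots, v_\ell$ to spin $3$ one at a time, in left-to-right order, growing $D$ by one layer. The key bookkeeping: when we flip $v_i$, its left neighbor $v_{i-1}$ is already spin $3$ (for $i \ge 2$) and the cell of $D$ below it is spin $3$, so flipping $v_i$ to $3$ \emph{creates} two agreeing edges and \emph{destroys} at most two agreeing edges among $\{v_{i-1}, v_{i+1}\}$ plus it removes the external-field cost $h_{\sigma(v_i)}$ and adds $h_3$. Precisely, $H(\omega_i) - H(\omega_{i-1}) = (\text{number of neighbors of }v_i\text{ not yet flipped and disagreeing with }3) - (\text{number of neighbors already agreeing with }3) + h_{\sigma(v_i)} - h_3$. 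The first flip costs at most $2 - (h_3 - \sigma(v_1)\text{-field})$; each subsequent flip costs at most $1$ in the generic case, and costs $0$ or less whenever $v_i$ agrees with $v_{i-1}$ — in fact the running height is controlled by $1 + (\text{how far we are from the last colored edge})$. The point of requiring \emph{two} colored edges is that after we pass the first one, the row $v_1 \cdots v_\ell$ has a spin that reappears, so completing the layer lands us at $H(\omega_\ell) - H(\omega_0) = \sum_{i=1}^\ell (h_{\sigma(v_i)} - h_3) < 0$ strictly (it is $\le 0$ always and $<0$ since not all $\sigma(v_i)=3$), while the intermediate maximum stays at $\le 1$ rather than $\le 2$; the single-colored-edge case is exactly the one where the intermediate max is forced up to $2$ and the lemma would fail, which is why it is excluded. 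I would verify the ``$\le 1$'' claim by a short case check on the first two and last flips, using $h_3 < 1$.

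The main obstacle I anticipate is the careful accounting of the intermediate energy at the flip immediately after a colored edge: when $\sigma(v_{i-1}) \ne \sigma(v_i)$, flipping $v_i$ removes only the one agreeing edge with the cell of $D$ below (the edge to $v_{i-1}$ was already colored, so no agreeing edge is lost there) plus possibly one agreeing edge to $v_{i+1}$, giving a cost of at most $1 + (h_{\sigma(v_i)} - h_3) < 1$; the worrying configuration is instead a long constant run $v_1 = \cdots = v_k$ before the first colored edge, where the height could build up — but flipping along a constant run is exactly the two-sided-strip computation of Figure \ref{fig:strip}, whose cost is $\le 2$ at the single junction with $D$ and then $\le$ the field difference thereafter, so one must check that this initial run is ``absorbed'' before the first colored edge is reached and that the two-colored-edge hypothesis keeps the \emph{final} energy strictly negative so that $\mc V_\sigma \le 1$ rather than only $\mc V_\sigma < 2$. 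Handling the geometry when the side of $D$ has length $\ell$ comparable to the lattice dimension (so the periodic identification matters) would be a secondary technical point, dispatched by Assumption A which guarantees $D$ is not a full winding strip in the regime $\mc V_\sigma > 2$.
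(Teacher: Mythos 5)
There is a genuine gap: your path does not achieve the bound $\mc V_\sigma \le 1$ that the lemma asserts. You flip the \emph{entire} layer $v_1,\ldots,v_\ell$ adjacent to the chosen side, starting from one end, and your own bookkeeping already concedes that the first flip can cost $2-(h_3-h_{\sigma(v_1)})$, which is strictly greater than $1$ because $h_3-h_1<1$. Concretely, if $v_1$ sits at the end of the side and its corner-adjacent neighbour, its top neighbour and $v_2$ all carry the same spin as $v_1$, then $H(\omega_1)-H(\sigma)=2-(h_3-h_{\sigma(v_1)})>1$, and no later step can undo this: the stability level is controlled by the running maximum of the path, so nothing is ``absorbed'' once the excursion has exceeded $H(\sigma)+1$. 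The presence of two colored edges further along the side is irrelevant to this first step, so your assertion that the hypothesis forces the intermediate maximum down to $\le 1$ is not justified by the path you construct; as written, your argument only yields $\mc V_\sigma< 2$, which is the conclusion of the strip computation and not the statement of the lemma (and the sharper bound is what the subsequent Lemma \ref{lem:col2}, which assumes $\mc V_\sigma>1$, actually needs).

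The paper avoids this by using the hypothesis to \emph{choose a different, shorter path}: with at least two colored edges on the side there is a maximal constant run $v_1,\ldots,v_n$ whose two boundary edges are both colored in $\sigma$ (in particular the run does not reach the ends of the side), and one flips only that run. Then the left boundary edge is colored before and after the first flip, so every partial configuration $\omega_i$ gains at most one colored edge relative to $\sigma$ among the edges of $v_1,\ldots,v_i$, giving $H(\omega_i)-H(\sigma)\le 1-i(h_3-h_1)<1$, while at $i=n$ the right boundary edge is also already colored, so the interaction term does not increase at all and $H(\omega_n)<H(\sigma)$. If you restrict your flipping procedure to such an internal run (rather than the whole layer), your energy accounting goes through and recovers the claimed bound; as proposed, the path starting at the corner end defeats it.
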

\begin{proof}
    Let $v_1, \ldots, v_n$ be the maximal consecutive cells on one side of $D$ with the same spin, as in Figure \ref{fig:col1}. By assumption, neither $v_1$ nor $v_n$ is a corner of $D$. For simplicity, we assume that the spins of $v_i$'s are 1. It will turn out that the proof is independent of the spin type.

    Define the path $\omega = (\omega_0, \ldots, \omega_n)$ by flipping the spins of $v_i$ consecutively to 3. To compute $H(\omega_i) - H(\omega_0)$, it suffices to compare $v_1, \ldots, v_i$. Counting the number of the colored edges of $v_1, \ldots, v_i$, we see that $\omega_0$ contains at least $i+1$ colored edges, and $\omega_i$ contains at most $i+2$ colored edges for $0 < i < n$. The red edges in Figure \ref{fig:col1}-(a), (b) show the possible colored edges. Thus, we have
    \[
        H(\omega_i) - H(\omega_0) \le 1 - i(h_3 - h_1) \le 1.
    \]
    If $i = n$, $\omega_0$ contains at least $n+2$ colored edges among the edges of $v_1, \ldots, v_n$, and $\omega_n$ contains at most $n+2$ colored edges, see the red edges in Figure \ref{fig:col1}-(a), (c). Thus, we have
    \[
        H(\omega_n) - H(\omega_0) \le -n(h_3 - h_1) < 0.
    \]
    Therefore, we have $\mc V_\sigma \le 1$.
\end{proof}

\begin{figure}
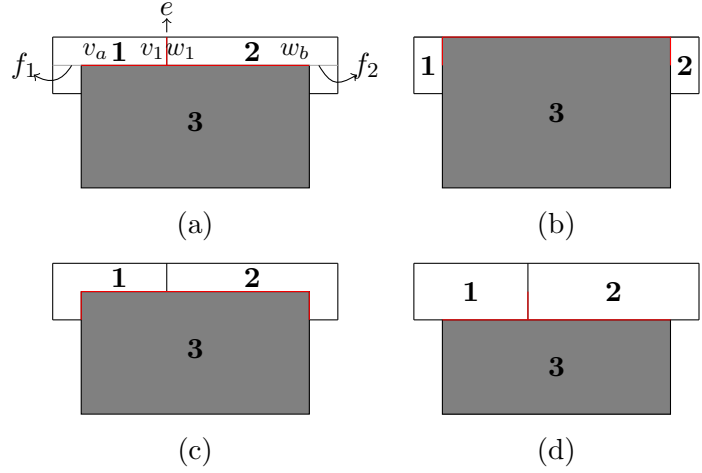

\centering
\ctikzfig{col2}
\caption{\label{fig:col2} Figure for Lemma \ref{lem:col2} where $a, b \neq 0$. The red edges indicate the possible boundaries between different spins. The top two figures show the process of flipping the spins of $v_1, \ldots, v_a$ and $w_1, \ldots w_b$ successively. The bottom two figures are obtained through a similar process. Either one of these processes decreases the total energy of the configuration.}
\end{figure}

\begin{figure}[!ht]
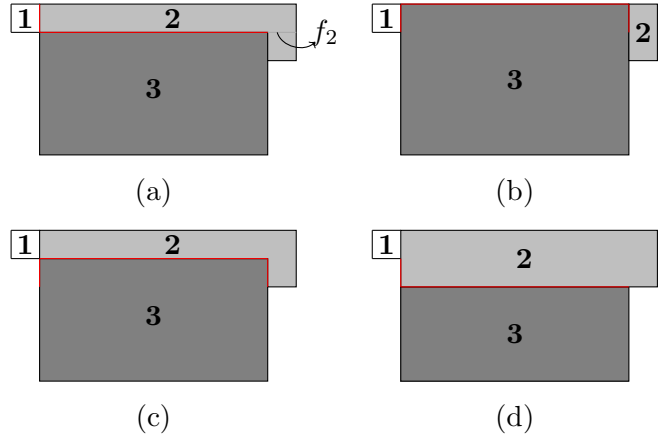

\centering
\ctikzfig{col3}
\caption{\label{fig:col3} Figure for Lemma \ref{lem:col2} where $a = 0$. The right two figures show the resulting configurations by flipping the spins along the rows. If $f_2$ is not colored, either of these processes decreases the total energy of the configuration.}
\end{figure}

The next lemma shows that a colored edge on one side forces the existence of a colored edge on the other side.
\begin{lemma}
    \label{lem:col2}
    Suppose that $\mc V_\sigma > 1$. Along one side of the 3-cluster $D$, suppose there is a unique colored edge $e$ splitting the side of $D$ with length $a$ and $b$. Consider the cells around $D$, which share an edge or a vertex with $D$. Starting from the edge $e$, denote these cells in clockwise order by $w_1, \ldots, w_{b+2}$. Also, starting from $e$, denote the cells by $v_1, \ldots ,v_{a+2}$ in counterclockwise order. Let $f_1$ be the edge between $v_{a+1}$ and $v_{a+2}$, and $f_2$ be the edge between $w_{b+1}$ and $w_{b+2}$. Refer to Figure \ref{fig:col2}-(a) for the precise illustrations.
    
    Suppose $a, b > 0$. If $e$ is colored, then $f_1$ or $f_2$ is colored. Moreover, if $a = 0$, then $f_2$ is colored.
\end{lemma}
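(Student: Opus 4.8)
The plan is to argue by contradiction: assume $\mc V_\sigma > 1$ and that the conclusion fails, i.e.\ $e$ is colored but neither $f_1$ nor $f_2$ is colored (or $a = 0$ but $f_2$ is not colored), then exhibit an explicit path from $\sigma$ to a configuration of strictly lower energy whose maximal energy increase along the way is at most $1$, contradicting $\mc V_\sigma > 1$. The two candidate paths are the ones drawn in Figure~\ref{fig:col2}: the top process flips $v_1, \ldots, v_a$ and $w_1, \ldots, w_b$ successively to spin $3$ (enlarging $D$ by a strip on that side), and the bottom process flips a row of $3$-spins of $D$ to the spin bordering them (shrinking $D$ by a strip). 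In each case I would track the number of colored edges before and after to control $H(\omega_i) - H(\omega_0)$, exactly as in the proof of Lemma~\ref{lem:col1}.

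**Key steps.** First I would set up the bookkeeping: since $e$ is the unique colored edge on that side, the $a$ cells on one part of the side all share the spin of $v_{a+2}$'s neighbor and the $b$ cells on the other part all share the spin of $w_{b+2}$'s neighbor; call these spins $s_v, s_w \in \{1,2\}$. Second, for the \emph{growing} path (flip $v_1, \ldots, v_a$, then $w_1, \ldots, w_b$ to $3$), I would check that each intermediate step changes the colored-edge count by at most $+1$ — here the hypothesis that $f_1$ and $f_2$ are \emph{not} colored is exactly what prevents an extra colored edge from appearing at the far corners — so $H(\omega_i) - H(\omega_0) \le 1 - (\text{number of flipped cells})\cdot(\text{positive field gap})$, hence $\le 1$; and the terminal energy change is $a(h_{s_v} - h_3) + b(h_{s_w} - h_3) \le 0$, with equality only if $s_v = s_w = 3$, impossible since $e$ is colored. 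Third, if this growing path happens to be energy-neutral in a degenerate sub-case, I would fall back on the \emph{shrinking} path (the bottom of Figure~\ref{fig:col2}): flip the adjacent row of $3$-spins to $s_v$ or $s_w$; one of the two directions must strictly decrease the energy because the two field gaps $h_3 - h_{s_v}$ and $h_3 - h_{s_w}$ cannot both make the shrink unfavorable simultaneously. Finally, the $a = 0$ case is the special case where the colored edge $e$ sits at a corner of $D$; here the same growing/shrinking dichotomy applies using the single row indexed by the $w_i$'s, and the non-coloredness of $f_2$ is what keeps the colored-edge increment bounded by $1$ — refer to Figure~\ref{fig:col3}.

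**Main obstacle.** The delicate point is the colored-edge count at the two ends of the flipped strip, near the corners $v_{a+1}, v_{a+2}$ and $w_{b+1}, w_{b+2}$: a priori flipping a boundary cell to $3$ could create up to two new colored edges at a corner, which would push $H(\omega_i) - H(\omega_0)$ above $1$ and break the argument. The content of the lemma is precisely that, \emph{unless} $f_1$ or $f_2$ is already colored, this cannot happen, so the careful case analysis of the local spin pattern at those four corner cells — and the verification that in every surviving configuration at least one of the two explicit paths strictly lowers the energy with uphill cost $\le 1$ — is the real work. I expect no subtlety in the field-gap arithmetic (it is the same $h_j - h_i > 0$ estimate used throughout Section~\ref{subsec:stab}); the only genuine care is combinatorial, in enumerating the corner configurations and matching each to the growing or shrinking path.
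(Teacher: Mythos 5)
Your overall strategy (contradict $\mc V_\sigma > 1$ by exhibiting one of two local moves --- grow $D$ by the strip $v_1,\ldots,v_a,w_1,\ldots,w_b$, or shrink $D$ by the adjacent row --- with uphill cost at most $1$, controlled by counting colored edges) is the same as the paper's. But your energy bookkeeping for the growing move is wrong, and this breaks the argument at its crucial point. When the strip is flipped to spin $3$, the colored edges to $D$ and the edge $e$ disappear, but two new colored edges appear at the ends (between $v_a$ and $v_{a+1}$, and between $w_b$ and $w_{b+1}$), and the bottom edges of the strip all become colored; the net interaction change can be $+1$, not $0$. So the terminal energy change of the growing move is only bounded by $1-(h_3-h_1)a-(h_3-h_2)b$, which can be \emph{positive} when $a,b$ are small --- it is not ``$a(h_{s_v}-h_3)+b(h_{s_w}-h_3)\le 0$ with equality impossible,'' as you claim. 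Consequently the growing path alone does not settle the matter, and the shrinking move is not a fallback for a ``degenerate energy-neutral sub-case'' but one horn of a genuine dichotomy: growth changes the energy by at most $1-X$ and shrinkage (coloring the row of $D$ under the $v$'s with spin $1$ and under the $w$'s with spin $2$, keeping the two-spin pattern, not a uniform spin as you propose) by at most $X-1$, where $X=(h_3-h_1)a+(h_3-h_2)b$.

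The second, related gap is that you never invoke Assumption B. The two bounds above are negatives of each other, so the argument closes only if the exact tie $X=1$ is excluded; writing $X=a(h_2-h_1)+(a+b)(h_3-h_2)$, Assumption B says precisely that this cannot be an integer, hence one of the two moves strictly lowers the energy. Your substitute claim --- that ``one of the two shrink directions must strictly decrease the energy because the two field gaps cannot both make the shrink unfavorable simultaneously'' --- is not correct: shrinking always \emph{raises} the field energy by $X$ and lowers the interaction energy by at most $1$, so when $X>1$ no version of the shrink is favorable, and it is the growth move that must be used. Without the correct growth bound and without Assumption B to break the tie, the case $X$ near $1$ is not handled and no strict energy decrease (hence no contradiction with $\mc V_\sigma>1$) is obtained. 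The remaining ingredients of your outline (using the uniqueness of $e$ together with the non-coloredness of $f_1,f_2$ and Lemma \ref{lem:col1} to fix the spins of $v_1,\ldots,v_{a+2}$ and $w_1,\ldots,w_{b+2}$, ordering the flips so each intermediate uphill step costs at most $1$, and the analogous treatment of $a=0$ via Figure \ref{fig:col3}) do match the paper's proof.
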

\begin{proof}
    Suppose $e$ is colored but $f_1, f_2$ are not colored. By Lemma \ref{lem:col1} and the assumption $\mc V_\sigma > 2$, the cells $v_1, \ldots, v_{a+2}$ have same spins and $w_1, \ldots, w_{b+2}$ have same spins. Without loss of generality, let $v_i$'s have spin 1 and $w_j$'s have spin 2.

    Let $\sigma_1$ be a configuration obtained by flipping the spins of $v_1, \ldots, v_a$ and $w_1, \ldots, w_b$ to 3, as in Figure \ref{fig:col2}-(b). Let $\sigma_2$ be a configuration obtained by flipping the spins of cells below $v_1, \ldots, v_a$ to 1, and the cells below $w_1, \ldots, w_b$ to 2 as in Figure \ref{fig:col2}-(d). We compare the energy of these two states with $\sigma$.
    
    Figure \ref{fig:col2}-(a),(b) compare $\sigma_1$ and $\sigma$. The figure shows the following observations.
    \begin{itemize}[leftmargin=*]
        \item Among the edges of the flipped cells, $\sigma$ has at least $l+1$ colored edges. 
        \item Among the edges of the flipped cells, $\sigma_1$ has exactly $l + 2$ colored edges. This is because the distances between 3-clusters are at least 2, as stated in Lemma \ref{lem:rect}.
    \end{itemize}
    As before, possible colored edges are colored red in Figure \ref{fig:col2}. Hence we have
    \[
        H(\sigma_1) - H(\sigma) \le -(h_3 - h_1)a - (h_3 - h_2)b + 1.
    \]

    Figure \ref{fig:col2}-(c),(d) compare $\sigma_2$ and $\sigma$. The figure shows the following observations.
    \begin{itemize}[leftmargin=*]
        \item Among the edges of the flipped cells, $\sigma$ has exactly $l + 2$ colored edges.
        \item Among the edges of the flipped cells, $\sigma_2$ has at most $l + 1$ colored edges, due to the case where the height of $D$ is 1.
    \end{itemize}
    Hence we have
    \[
        H(\sigma_2) - H(\sigma) \le (h_3 - h_1)a + (h_3 - h_2)b - 1.
    \]
    Recall from Assumption B that $(h_3-h_1)a + (h_3 - h_2)b - 1$ cannot be zero. Hence, we have either
    $H(\sigma_1) < H(\sigma)$ or $H(\sigma_2) < H(\sigma)$.

    Finally, we construct a path from $\sigma$ to $\sigma_1$ and $\sigma_2$ by flipping the spins of consecutive cells. By bounding the energy of the path by $H(\sigma)+1$, one can show that $\mc V_{\sigma} \le 1$ as in the proof of Lemma \ref{lem:col1}. First, suppose $H(\sigma_1) < H(\sigma)$. Consider a path from $\sigma$ to $\sigma_1$ by flipping $v_1, \ldots, v_a$, then $w_1, \ldots ,w_b$ in order. The first flip increases the energy by $1 - (h_3 - h_1)$, then the following steps decrease the energy. So the energy of the path is bounded by $H(\sigma) + 1$ and we have $V_\sigma \le 1$. On the other hand, suppose $H(\sigma_2) < H(\sigma)$. Construct a path from $\sigma$ to $\sigma_2$ by sequentially flipping cells from the outermost to the innermost. Begin by flipping the 2-spins. At each step, when a 1-spin or 2-spin is transformed into a 3-spin, the energy increases by at most $h_3 - h_1$ or $h_3 - h_2$, respectively. Meanwhile, the last flip decreases the energy by $1 - (h_3 - h_2)$. Thus, the energy of the path is bounded by $H(\sigma_2) + 1$, which is less than $H(\sigma) + 1$.
    
    Therefore, we have a path from $\sigma$ to a state with lower energy, where the energy of the path is bounded by $H(\sigma) + 1$. This contradicts the assumption $\mc V_\sigma > 1$, so $f_1$ or $f_2$ must be colored.

    We may handle the case $a = 0$ similarly. Suppose that $f_2$ is not colored. Note that $f_1$ may be colored here. With the same notation as above, Figure \ref{fig:col3} shows that
    \[
        H(\sigma_1) - H(\sigma) \le (h_3 - h_2)l - 1,\quad H(\sigma_2) - H(\sigma) \le -(h_3 - h_2)l + 1.
    \]
    Either $\sigma_1$ and $\sigma_2$ have energy less than $H(\sigma)$, and both have a path from $\sigma$ obtained by flipping consecutive cells as before. This path has energy less than $H(\sigma) + 1$, so $V_\sigma < 1$, which is a contradiction. Therefore $f_2$ must be colored.
\end{proof}
The previous lemma provides a restriction on the possible position of the colored edge along the boundary of $D$. Using the notations from Lemma \ref{lem:col2} and Figure \ref{fig:col2}-(a), we claim that there is no colored edge $e$ with $a$ and $b$ both greater than 0.

Suppose to the contrary that such $e$ exists, which we will denote as $e_1$. Now, we reuse the notations of Figure \ref{fig:col2}-(a) where $e_{i+1}$ takes on the role of $e$ and $a = 0$. Define the edge corresponding to $f_2$ as $e_{i+1}$ for $i=1, 2, 3, 4$, as shown in Figure \ref{fig:windmill-mid}. By the previous lemma, $e_2$ is colored. Then inductively, $e_3$, $e_4$, $e_5$ must be colored. Since $e_1$ and $e_5$ is colored, this contradicts $V_\sigma > 2$ by Lemma \ref{lem:col1}.

Hence we may assume that there is no colored edge at the middle of the side of $D$. The remaining case is when the 3-cluster is surrounded by a single type of spin, or when the neighborhood spins are located as Figure \ref{fig:windmill}-(a).

\begin{figure}
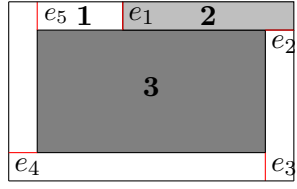

\centering
\ctikzfig{windmill-mid}
\caption{\label{fig:windmill-mid} The figure shows that two spins cannot lie on the same side. If $e_1$ is colored, so are $e_2, \ldots, e_5$ inductively by Lemma \ref{lem:col2}.}
\end{figure}

\vspace{5pt}
\noindent \bf{(Case 2-1: 3-cluster is surrounded by a single type of spin)} Let the surrounding spin type be $i$. We flip 3-spin at the down left corner to $i$-spin. Then we flip $i$-spin at the opposite side to 3-spin. Repeat this along the left side. As a result, the 3-cluster is shifted to the right by one. Along this operation, the energy is bounded above by $H(\sigma) + 2$. Moreover, the final configuration has the same number of spin as before, and the number of colored edges remains the same or decreases. Repeating this operation, the energy eventually decreases if the 3-cluster meets another cluster. So we have $V_\sigma < 2$ in this case. If the 3-cluster is the only cluster in the configuration, we may apply the same argument as in Case 1. Therefore $V_\sigma < 2$.

\begin{figure}
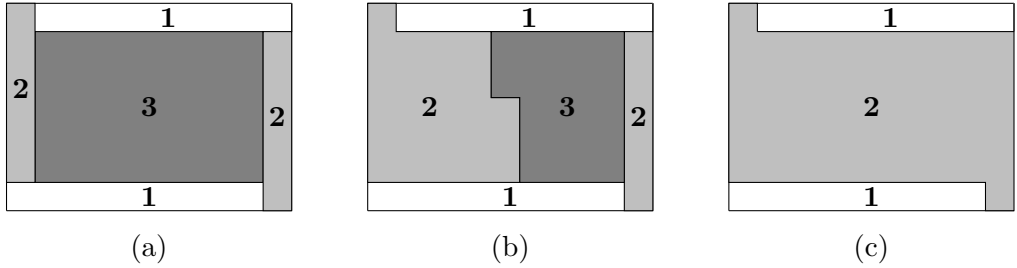

\centering
\ctikzfig{windmill}
\caption{\label{fig:windmill} Figure for Case 2-2 in the proof of Theorem \ref{thm:stab2}. Starting from (a), 2-spins are flipped to 3-spins from left to right column, downwards in each column. The resulting configuration (c) has lower energy than (a).}
\end{figure}

\vspace{5pt}
\noindent \bf{(Case 2-2: 3-cluster is surrounded by two type of spins)} Lemma \ref{lem:col2} asserts that Figure \ref{fig:windmill}-(a) is the only possible configuration. Consider the following two operations.
\begin{enumerate}[leftmargin=*]
    \item Successively flip the 1-spins along the side of length $a$ to 3-spins. This changes the energy no more than 2. The final configuration $\sigma_a$ has at most $a + 2$ colored edge around the flipped cells. The initial state has at least $a+1$ colored edge, so we have
    \[
        H(\sigma_a) - H(\sigma) \le -(h_3-h_1)a + 1.
    \]
    If this bound is below zero, we have $V_\sigma \le 2$.
    \item Do the same operation on the side of length $b$. Then we have either $V_\sigma \le 2$ or $b \le \frac{1}{h_3-h_2}$.
\end{enumerate}
Hence we may assume $a \le \frac{1}{h_3-h_1}$ and $b \le \frac{1}{h_3-h_2}$. Now, consider the path shown in Figure \ref{fig:windmill}.
Flip the 3-spins of the rectangle into 2-spins from the left column to the right, each column from the bottom to the top. Denote this path by $\{\gamma_n\}_{n=0,\ldots,ab}$. Let $0 \le q \le a - 1$ and $0 < r \le b$ be integers such that $n = bq + r$. Then we have
\[
    H(\gamma_n) - H(\sigma) = \begin{cases}
        (h_3 - h_2)(bq + r) + \one_{\{r \neq 0\}} & q < a - 1 \\
        (h_3 - h_2)(bq + r) - (2r - 1) & q = a - 1,\quad r < b \\
        (h_3 - h_2)ab - 2b  & q = a - 1,\quad r = b.
    \end{cases}
\]
We have
\[
    H(\gamma_{ab}) - H(\sigma) = \left((h_3 - h_2)a - 2\right)b < 0, \text{ and}
\]
\[
    \max_{n = 0, \ldots, ab} H(\gamma_n) - H(\sigma) = (h_3 - h_2)(a-1)b < (h_3 - h_2) \cdot \frac{1}{(h_3-h_2)}\cdot \frac{1}{(h_3-h_1)} < V_1.
\]
Therefore, $V_\sigma < V_1$. This completes the proof of Theorem \ref{thm:stab2}.

Finally, we determine the minimal gates between \bf1, \bf2, and \bf3.
\begin{proof}[Proof of Theorem \ref{thm:gate}]
    Recall the proof of Theorem \ref{thm:gamma}. For a path $\gamma$ from \bf 1 to \bf 3, we observed that $\mc P_{23}\gamma$ gives a path in the Ising model with spins of type 1 and 3, and thus,
    \[
        \sup_{\sigma \in \gamma} H(\sigma) \ge \sup_{\sigma \in \gamma} H(\mc P_{23}(\sigma)) \ge H(\bf1) + \Gamma_{13}^\star.
    \]
    For the optimal path $\gamma \in \Omega_{\bf1, \bf3}$, each equality of the above inequalities must hold. Thus, $\mc P_{23}\gamma$ must be an optimal path of the Ising model. So $\mc P_{23}(\eta) \in \mathcal{G}_{13}$ for some $\eta \in \gamma$. Then, we have
    \[
        \Phi(\bf1,\bf3) = \sup_{\sigma \in \gamma} H(\mc P_{23} \sigma) = H(\mc P_{23} \eta) \le H(\eta) \le \sup_{\sigma \in \gamma}H(\sigma) = \Phi(\bf1, \bf3),
    \]
    so $H(\eta) = H(\mc P_{13}(\eta))$ are both equal to $\Phi(\bf1,\bf3)$. By Lemma \ref{lem:proj}, this is the case only if $\eta = \mc P_{13}(\eta)$. Therefore $\gamma$ meets $\mc G_{13}$.

    Now we consider the reference path of the Ising model between \bf1 and \bf3. Note that each configuration in $\mc G_{13} = \dect 3 1 {\ell_c^{13}+1} {\ell_c^{13}}$ is contained in some reference path, denoted by $\gamma_{13}$. Since $\mc P_{23}(\gamma_{13}) = \gamma_{13}$, $\gamma_{13}$ is an optimal path in $\Omega_{\bf1,\bf3}$ in our model. Relying on the knowledge on the reference path provided in the Appendix \ref{app:a1}, $\gamma_{13}$ has a unique saddle configuration. Namely, $|\gamma_{13} \cap \mc G_{13}| = 1$ and
    \[
        \sup_{\sigma \in \gamma_{13}, \sigma \notin \mc G_{13}} H(\sigma) < \Phi(\bf1, \bf3).
    \]
    This tells that each configuration in $\mc G_{13}$ is \emph{essential}, which means that it must be contained in any gate between \bf1 and \bf3. Hence, $\mc G_{13}$ is a minimal gate between \bf1 and \bf3.

    Similarly, we may apply $\mc P_{13}$ and $\mc P_{23}$ to show the remaining statements.
\end{proof}

\subsection{Large deviation-type results}
\label{subsec:path_proof}
We prove Theorem \ref{thm:LDP} and Proposition \ref{prop:pot_zero} using
a pathwise approach described in Section \ref{subsec:pathwise}. To achieve this, we identify the tube of typical paths and show that \bf2 does not appear in such paths. It is worth noting that in other literature, such precise study is not needed since transition time, a usual interest, can be estimated by a rough knowledge such as minimal gates. However, we need that \bf2 is not contained in all typical paths, which requires a more precise analysis.

We start by classifying the cycles along the typical path.
\begin{lemma}
    \label{lem:cycle}
    Recall the definition of $\ell_c^{13}$ from \eqref{eq:crit}. Let $i, j$ be at least $\ell_c^{13} = \lceil 2/(h_3-h_1) \rceil$ and $\eta_0$ be a single configuration $\rect 3 1 i j $. Let $C$ be a set of configurations characterized by the following properties, depicted in Figure \ref{fig:cycle}.
    \begin{enumerate}[(1), leftmargin=*]
        \item The configuration does not contain 2-spins.
        \item There is a unique 3-cluster, and the contour of the 3-cluster is \emph{simple}, in the sense that every intersection between parallel or vertical lines within the interior of the 3-cluster is connected.
        \item The unique 3-cluster of the configuration is contained in the rectangular 3-cluster of $\eta_0$.
        \item The number of 3-spins is at least $ij - \ell_c^{13} + 2$.
    \end{enumerate}
    Then $C$ is a maximal cycle disjoint from \bf3 and containing $\eta_0$. In particular, we have the following properties on $C$.
    \begin{enumerate}[(a), leftmargin=*]
        \item The bottom of $C$ is $\eta_0$. The depth of $C$ is $2 - (h_3 - h_1)$.
        \item The bottom of $\partial C$ lies in $\dect 3 1 i j \cup \dect 3 1 j i$.
    \end{enumerate}
\end{lemma}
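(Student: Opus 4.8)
The plan is to verify directly that the set $C$ defined by properties (1)--(4) is a cycle (i.e.\ that $\max_{\eta \in C} H(\eta) < H(\FF(\partial C))$), that it is maximal among cycles disjoint from $\bf3$ containing $\eta_0$, and then to read off the quantitative claims (a) and (b). First I would set up bookkeeping: for a configuration $\sigma$ satisfying (1)--(3), write $k$ for its number of $3$-spins and $p(\sigma)$ for the perimeter (number of colored edges) of its unique simple $3$-cluster. Since there are no $2$-spins, $H(\sigma) - H(\bf1) = p(\sigma) - (h_3-h_1)k$, reducing everything to an Ising-type computation with field $h_3-h_1$ inside the $i\times j$ box of $\eta_0$. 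A simple isoperimetric fact on simple polyominoes contained in a rectangle gives that, for fixed $k$ in the range allowed by (3)--(4), $p$ is minimized by the quasi-rectangular shapes, and $\eta_0$ itself (the full $i\times j$ rectangle, $k=ij$) is the unique global minimizer of $H$ on $C$; this establishes that $\FF(C) = \{\eta_0\}$ and $H(\FF(C)) = H(\eta_0)$.

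Next I would identify $\partial C$. A configuration in $\partial C$ is obtained from some $\sigma \in C$ by a single spin flip that leaves $C$. The ways to leave are: (i) create a $2$-spin (violating (1)); (ii) break simplicity of the contour or split the $3$-cluster (violating (2)); (iii) grow the $3$-cluster beyond the $i\times j$ box (violating (3)); or (iv) drop the $3$-spin count to $ij - \ell_c^{13}+1$ (violating (4)). One checks that possibilities (i) and (ii) always cost energy strictly more than $2-(h_3-h_1)$ above $H(\eta_0)$ — indeed flipping a spin to $2$ or carving a non-simple notch raises the perimeter and lowers the field contribution by less — while possibility (iii) from the full rectangle $\eta_0$ produces exactly $\dect 3 1 i j \cup \dect 3 1 j i$, which is at height $H(\eta_0) + 2 - (h_3-h_1)$, and possibility (iv) requires first shrinking far below $\eta_0$ and is even higher. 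Comparing these, $\FF(\partial C) \subseteq \dect 3 1 i j \cup \dect 3 1 j i$ with $H(\FF(\partial C)) = H(\eta_0) + 2 - (h_3-h_1)$ (here $i,j \ge \ell_c^{13}$ is exactly what guarantees a protuberance on the longer side is still downhill for the Ising droplet, so adding one spin does not immediately collapse toward $\bf3$). Since $\max_{\eta\in C} H(\eta)$ is attained by the least-filled simple shapes and is strictly below $H(\eta_0)+2-(h_3-h_1)$ by Assumption~B (the relevant energy gap is non-integer, so it is never an equality), $C$ is a genuine cycle, and (a) follows: depth $= H(\FF(\partial C)) - H(\FF(C)) = 2-(h_3-h_1)$.

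For maximality, suppose $C'$ is a cycle with $\eta_0 \in C' \not\subseteq$ (disjoint from $\bf3$) and $C \subsetneq C'$. Then $C'$ contains a configuration violating one of (1)--(4); tracing an optimal path inside $C'$ from $\eta_0$ to that configuration, the path must cross $\partial C$, and since $\partial C$ sits strictly above the interior of $C'$ we would get $\max_{\eta\in C'} H \ge H(\FF(\partial C)) > H(\FF(\partial C'))$ unless $C'$ already contains points of $\partial C$ — but the configurations of $\partial C$ in $\dect 3 1 i j \cup \dect 3 1 j i$ have an optimal path to $\bf3$ staying below their own energy, so including any of them forces $C'$ to meet $\bf3$, a contradiction. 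This pins $C$ as the maximal cycle, and (b) is just the identification of $\FF(\partial C)$ already made. The main obstacle I anticipate is the isoperimetric step in the second paragraph combined with case (ii) of the boundary analysis: one must argue carefully that breaking the simplicity of the contour (or pinching off the cluster) is never cheaper than the clean protuberance move, which is where the restriction to \emph{simple} contours in property (2) and the lattice geometry of polyominoes inside a rectangle do the real work; the rest is routine Ising-droplet energy accounting of the type already used in the proof of Theorem \ref{thm:gamma} and Appendix \ref{app:a1}.
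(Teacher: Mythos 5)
Your proposal is correct and follows essentially the same route as the paper's proof: two-sided energy estimates showing every configuration of $C$ lies strictly below $H(\eta_0)+2-(h_3-h_1)$ while every single-flip exit lies at or above that level, identification of the protuberance configurations $\dect{3}{1}{i}{j} \cup \dect{3}{1}{j}{i}$ (reached only from $\eta_0$ itself) as $\FF(\partial C)$, and maximality via the Ising-type growth path from $\eta_0$ through the protuberance to $\bf{3}$, which shows $\Gamma(\eta_0,\bf{3})=2-(h_3-h_1)$. The only cosmetic difference is that where you invoke an isoperimetric minimization over shapes with fixed volume, the paper simply observes that properties (2)--(4) force the $3$-cluster of every $\sigma\in C$ to have perimeter exactly $2(i+j)$, so that $H(\sigma)-H(\eta_0)=(h_3-h_1)\bigl(ij-\#\{3\text{-spins}\}\bigr)\le (h_3-h_1)(\ell_c^{13}-2)<2-(h_3-h_1)$, which yields the bottom, its uniqueness, and the interior bound in one stroke.
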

\begin{figure}
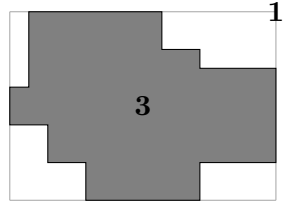

\centering
\ctikzfig{cycle}
\caption{\label{fig:cycle} A 3-cluster and the surrounding rectangle. Lemma \ref{lem:cycle}-(2) claims that for the configurations in the cycle $C$, the surrounding rectangle remains fixed.}
\end{figure}
\begin{proof}
    First, we show that $C$ is a cycle. It suffices to show the following claims.
    \begin{claim}
        For $\sigma \in C$, we have $H(\eta_0) \le H(\sigma) < H(\eta_0) + 2 - (h_3 - h_1)$.
    \end{claim}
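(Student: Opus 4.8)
The plan is to reduce the Claim to an elementary Ising-type computation, using property~(1): no configuration in $C$ carries a $2$-spin. For $\sigma \in C$ write $D$ for its unique $3$-cluster, $N_3 = |D|$ for the number of $3$-spins, and $p(D)$ for the number of colored edges of $\sigma$, i.e.\ the perimeter of $D$. Counting sites and edges gives, up to a constant that does not depend on $\sigma$,
\[
    H(\sigma) = p(D) - (h_3 - h_1)\,N_3 + \mathrm{const}.
\]
Since the $3$-cluster of $\eta_0$ is the $i \times j$ rectangle $R$, with perimeter $2(i+j)$ and $ij$ cells, subtracting yields
\[
    H(\sigma) - H(\eta_0) = \bigl(p(D) - 2(i+j)\bigr) + (h_3 - h_1)\bigl(ij - N_3\bigr),
\]
so the Claim reduces to controlling $p(D)$ and the deficit $k \coloneqq ij - N_3 \ge 0$.

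First I would pin down the shape of $D$. Property~(3) gives $D \subseteq R$, and property~(4) gives $|R \setminus D| = k \le \ell_c^{13} - 2$. Since every row and every column of $R$ has at least $\min(i,j) \ge \ell_c^{13} > k$ cells, $D$ cannot avoid a full row or a full column of $R$; as $D \subseteq R$, the bounding box of $D$ is therefore exactly $R$. Next, the ``simple contour'' condition in property~(2) says precisely that $D$ is HV-convex: every horizontal and every vertical line meets $D$ in an interval. For such a $D$, whose bounding box meets every row and column, the colored edges are exactly the two end-edges of each row together with the two end-edges of each column, so $p(D) = 2(i+j) = p(\eta_0)$.

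With $p(D) = 2(i+j)$ the energy identity collapses to $H(\sigma) - H(\eta_0) = (h_3 - h_1)\,k$. As $k \ge 0$, this is the lower bound $H(\eta_0) \le H(\sigma)$, with equality exactly when $k = 0$, i.e.\ $\sigma = \eta_0$. For the upper bound, $k \le \ell_c^{13} - 2 = \lceil 2/(h_3 - h_1)\rceil - 2$, and $\lceil x\rceil < x + 1$ with $x = 2/(h_3 - h_1)$ gives
\[
    (h_3 - h_1)\,k \;\le\; (h_3 - h_1)\Bigl(\tfrac{2}{h_3 - h_1} - 1\Bigr) \;=\; 2 - (h_3 - h_1),
\]
and strictly so; hence $H(\sigma) < H(\eta_0) + 2 - (h_3 - h_1)$, which is the Claim.

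The only step with any real content is the geometric one: reading off from ``simple contour'' that $D$ is a connected HV-convex polyomino, and deducing from it both that $D$ meets every row and column of $R$ and the perimeter identity $p(D) = 2(i+j)$. I would isolate this as a short sublemma on HV-convex polyominoes inscribed in a rectangle with at most $\ell_c^{13} - 2$ missing cells; the remaining ingredients of Lemma~\ref{lem:cycle} (the depth $2 - (h_3 - h_1)$, the location of $\FF(\partial C)$ in $\dect 3 1 i j \cup \dect 3 1 j i$, and the maximality of $C$) can then reuse it. Everything else is the one-line energy identity above and the ceiling inequality.
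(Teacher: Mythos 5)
Your proposal is correct and takes essentially the same route as the paper: the energy difference is $(h_3-h_1)$ times the number of missing $3$-spins because the perimeter of the $3$-cluster stays fixed at $2(i+j)$, and the upper bound follows from the deficit being at most $\ell_c^{13}-2$ together with the ceiling inequality. The only difference is that you supply a justification (HV-convexity of the simple contour plus the observation that $D$ cannot miss an entire row or column of $R$) for the perimeter identity, which the paper simply states as an observation.
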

    \begin{proof}[Proof of Claim]
    Observe that the perimeter of the 3-cluster of $\sigma$ is $2(i+j)$, which is same as that of $\eta_0$. So counting the number of 1-spins yields
    \begin{align*}
        0 \le H(\sigma) - H(\eta_0) = (h_3 - h_1)\left(ij - \sum_{x \in V} \one_{\{\sigma(x) = 1\}} \right) \le (h_3-h_1)(\ell_c - 2) < 2 - (h_3 - h_1)
    \end{align*}
    where $V$ denotes the set of cells of the configuration. Therefore the claim holds.
    \end{proof}

    \begin{claim}
        For $\sigma \in \partial C$, we have $H(\sigma) \ge H(\eta_0) + 2 - (h_3 - h_1)$.
    \end{claim}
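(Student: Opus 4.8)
The plan is to argue straight from the definition of $\partial C$. Fix $\sigma\in\partial C$ and pick $\tau\in C$ with $\sigma\sim\tau$; let $x$ be the unique site where they disagree, and set $n_3$ to be the number of neighbours of $x$ carrying spin $3$ in $\tau$. Since $\tau$ has no $2$-spin by property~(1), every neighbour of $x$ in $\tau$ is a $1$- or a $3$-spin, so $H(\sigma)-H(\tau)$ splits as a bond term — an explicit function of $n_3$ and of the pair $(\tau(x),\sigma(x))$ — plus the field term $h_{\tau(x)}-h_{\sigma(x)}$. I would then run through the four possibilities for $(\tau(x),\sigma(x))\in\{1,3\}\times\{1,2,3\}$ with $\tau(x)\neq\sigma(x)$.

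Before that I would record three geometric consequences of $\tau$ satisfying (2)--(4). (i) Every $1$-spin cell of $\tau$ has at most two $3$-spin neighbours: three or four would produce a unit-width dent or a unit hole in the $3$-cluster, contradicting simplicity of the contour. (ii) Every cell of the $3$-cluster has at least two $3$-spin neighbours, and these two are adjacent when there are exactly two (two opposite $3$-spin neighbours is already forbidden by simplicity, since it would disconnect the section of the $3$-cluster along a line through $x$). (iii) The complement of the $3$-cluster inside the $i\times j$ rectangle has at most $\ell_c-2<\min(i,j)$ cells, hence cannot separate the rectangle; consequently removing one cell never disconnects the $3$-cluster, and, combined with (ii), removing a $3$-spin cell with exactly two $3$-spin neighbours — or filling in a $1$-spin cell inside the rectangle with exactly two $3$-spin neighbours — merely shaves off or fills a convex corner and leaves the contour simple.

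The three short cases come next. If $(\tau(x),\sigma(x))=(1,2)$, the bond term is $4-n_3\ge 2$ by (i), so $H(\sigma)-H(\tau)\ge 2-(h_2-h_1)\ge 2-(h_3-h_1)$. If $(\tau(x),\sigma(x))=(3,2)$, the bond term is $n_3\ge 2$ by (ii), so $H(\sigma)-H(\tau)\ge 2+(h_3-h_2)>2-(h_3-h_1)$. If $(\tau(x),\sigma(x))=(1,3)$: when $x$ lies inside the rectangle with $n_3=2$, fact~(iii) shows the flipped configuration still satisfies (1)--(4) and so would lie in $C$, contradicting $\sigma\in\partial C$; in every other subcase $n_3\le 1$ (a cell outside the rectangle has a single neighbour inside it), the bond term is $4-2n_3\ge 2$, and $H(\sigma)-H(\tau)\ge 2-(h_3-h_1)$. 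In all three cases $H(\tau)\ge H(\eta_0)$ by the Claim just proved, so the assertion follows.

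The genuine obstacle is $(\tau(x),\sigma(x))=(3,1)$, i.e.\ erasing a $3$-spin cell. The bond term is $2n_3-4$, which with (ii) is $\ge 2$ as soon as $n_3\ge 3$, settling that subcase; the remaining subcase $n_3=2$ has bond term $0$, so $H(\sigma)-H(\tau)=h_3-h_1$ only, too small to compare with $H(\eta_0)$ on its own. Here I would invoke (iii): removing a $3$-spin cell with $n_3=2$ keeps the $3$-cluster connected with a simple contour, so the only way $\sigma$ can fail to lie in $C$ is by violating~(4). That forces $\tau$ to carry exactly $ij-\ell_c+2$ spins $3$ and $\sigma$ exactly $ij-\ell_c+1$, both still with $3$-cluster perimeter $2(i+j)$; the energy count from the preceding Claim then gives $H(\sigma)-H(\eta_0)=(h_3-h_1)(\ell_c-1)$, and this is $\ge 2-(h_3-h_1)$ precisely because $\ell_c=\lceil 2/(h_3-h_1)\rceil\ge 2/(h_3-h_1)$. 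Pinning down that a $(3,1)$-flip can leave $C$ only through property~(4), and then extracting the inequality from the definition of $\ell_c$, is where the content of the Claim sits; the rest is bond-count bookkeeping.
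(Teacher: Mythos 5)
Your proposal is correct and follows essentially the same route as the paper: a case analysis over the single-site flip linking $\sigma\in\partial C$ to an adjacent state of $C$, where your parameter $n_3$ encodes exactly the paper's perimeter change ($2n_3-4$ for a $3\to1$ flip, $4-2n_3$ for a $1\to3$ flip), and the delicate subcase (erasing a corner $3$-spin with $n_3=2$) is settled, as in the paper, by showing that only property (4) can fail and then counting spins against $\eta_0$ via $\ell_c^{13}\ge 2/(h_3-h_1)$. One small remark: your fact (ii) and the exclusion of opposite $3$-neighbours really rest on properties (3)--(4) — a $3$-cell with at most one, or two opposite, $3$-spin neighbours would leave a row or column of the rectangle missing at least $\ell_c^{13}-1$ cluster cells — rather than on simplicity alone, but this is bookkeeping the paper likewise leaves implicit.
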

    \begin{proof}[Proof of Claim]
    Let $\sigma'$ be a configuration in C adjacent to $\sigma$. We show by considering the possible shapes of $\sigma$. Suppose $\sigma$ is obtained from $\sigma$ by changing a $i$-spin to a 2-spin. Then the number of the pairs of adjacent distinct spins increases by at least 2, so
    \[
        H(\sigma') - H(\sigma) \ge 2 + (h_i - h_2) > 2 - (h_3 - h_1).
    \]
    
    Next, suppose $\sigma$ is obtained from $\sigma$ by changing a 3-spin to a 1-spin. Note that the perimeter of the 3-cluster cannot decrease. If this is the case, we need to remove at least $\min(i, j) \ge \ell_c^{13}$ 3-spins compared to $\eta_0$. This contradicts the property (4) of the set $C$. 
    If the perimeter of the 3-cluster increases, it increases by at least 2. So the energy increases by at least $2 + (h_3 - h_1)$, so we are done. If the perimeter is preserved, we see that $\sigma'$ must be a shape of Figure \ref{fig:cycle}, or explicitly, satisfy (1), (2), and (3) of the above properties. Hence $\sigma'$ consists of exactly $ij - \ell_c^{13} + 1$ 3-spins, and we have
    \[
        H(\sigma') - H(\eta_0) = (h_3-h_1)(\ell_c^{13}+1) > 2 - (h_3 - h_1).
    \]

    Finally, suppose $\sigma'$ is obtained from $\sigma$ by replacing a 1-spin by 3-spin. Then the parameter of the 3-cluster is either preserved or increases. If the perimeter increases, the energy difference is bounded by
    \begin{equation}
        \label{eq:cyc_bdry}
        H(\sigma') - H(\eta_0) \ge H(\sigma') - H(\sigma) \ge 2 - (h_3 - h_1),
    \end{equation}
    so the claim holds. If the perimeter is preserved, we again see that $\sigma'$ must be as in Figure \ref{fig:cycle}, satisfying properties (1), (2), and (3). Moreover, the number of 3-spin decreases, so (4) is satisfied. So $\sigma' \in C$, which contradicts $\sigma' \in \partial C$.
    \end{proof}
    
    From the above two claims, we have that $C$ is a cycle. Moreover, the claims directly imply (a) of the statement. To see (b), it suffices to see when the equality holds while showing $H(\sigma) \le H(\eta_0) + 2 - (h_3 - h_1)$ for $\sigma \in \partial C$ in the claim. Following the proof, \eqref{eq:cyc_bdry} is the only possible case. This occurs when $\sigma'$ is obtained from $\sigma$ by changing a 1-spin attached to a boundary of 3-cluster into 3-spin, and moreover $H(\sigma) = H(\eta_0)$. Since $\eta_0$ is the unique bottom of $C$, we have $\sigma = \eta_0$. Therefore, $\sigma'$ lies in $\dect 3 1 i j \cup \dect 3 1 j i$.

    Finally, we show that $C$ is a maximal cycle containing $\eta_0$ and disjoint from \bf3. Showing that $\Gamma(\eta_0, \bf3) = 2 - (h_3 - h_1)$, which is a depth of $C$, gives the result. Indeed, a cycle containing $C$ has a larger depth, so it must contain \bf3. The cycle $C$ itself gives a lower bound $\Gamma(\eta_0, \bf3) \ge 2 - (h_3 - h_1)$, so it suffices to construct an optimal path $\gamma$ of height $H(\eta_0) + 2 - (h_3 - h_1)$. The construction is similar to the reference path of the Ising model. Precisely, starting from $\eta_0$, add a single 1-spin to the edge of the 3-cluster. Then, flip the rest of the 1-spins on the edge to 3-spins, which strictly decreases the energy. This operation gives a 3-cluster of size $(i + 1) \times j$, and decreases the energy by $2 - j(h_3 - h_1) < 0$ in total. Repeating the same operation, we may construct a path with the maximal energy less than $\Phi(\eta_0, \bf3)$. Hence, $C$ is the maximal cycle.
\end{proof}

We remark that in the proof, we did not use the relation $h_1 < h_2$. Thus a similar statement holds between spins of type 1 and 2, where $\ell_c^{13}$ is replaced by $\ell_c^{12}$ and the depth of the cycle is replaced by $2 - (h_3 - h_2)$.

Now we can fully describe typical paths from \bf1 to \bf3. This will lead us to the proof of Proposition \ref{prop:pot_zero}.
\begin{proof}[Proof of Proposition \ref{prop:pot_zero}]
    Consider a state in $\dect 3 1 i j$ where $i$ and $j$ are both greater than $\ell_c^{13}$. In this state, the only way to decrease the energy is by filling the 3-spins on the length $i$ side of the 3-cluster or returning to $\rect 3 1 i j$. Therefore, we move from $\rect 3 1 i j$ to either $\rect 3 1 {i+1} j$ or $\rect 3 1 i {j+1}$ with a probability of $1 - o_\beta(1)$. This follows from Proposition \ref{prop:exit} and the previous lemma.

    Now we determine a typical path from \bf1 to \bf3. The process exits the cycle of \bf1 through $\mc G_{13} = \dect 3 1 {\ell_c^{13}+1}{\ell_c^{13}}$ with a probability of $1 - o_\beta(1)$. Then, it follows a downhill path towards $\rect 3 1 {\ell_c^{13}+1}{\ell_c^{13}}$. This sequence of steps is repeated until we reach \bf3. This provides a comprehensive description of a typical path. Let $P$ be a union of all typical paths from \bf1 to \bf3. Note that $P$ does not contain any states with 2-spins. By Lemma \ref{lem:tube}, we obtain
    \[
        \limb \PP_{\bf1}^\beta[\tau_{\bf2} < \tau_{\bf3}] \le \limb \PP_{\bf1}^\beta[\tau_{\XX \backslash P} < \tau_{\bf3}] = 0.
    \]
\end{proof}

The main difficulty of the proof was that the tube of typical trajectories from \bf1 to \bf3 avoids \bf2. With this fact, one can also derive a large deviation-type result on a transition from \bf1 to \bf3.
\begin{proof}[Proof of Theorem \ref{thm:LDP}]
    A result on $\bf2 \to \bf3$ transition readily follows since \bf2 is the state with maximal stability. Indeed, \cite[Theorems 3.17 and 3.19, Propositions 3.18 and 3.20]{Nardi_2015} directly gives
        \[
        \lim_{\beta\to\infty} \frac{1}{\beta}\log\EE_{\mathbf{2}}^\beta[\tau_{\mathbf{3}}] = \Gamma_{23}^\star
    \]
    and
    \[
        \frac{\tau_{\bf3}}{\EE_{\bf2}^\beta[\tau_{\bf3}]} \xrightarrow{d} \exp(1).
    \]

    To show the remaining result, we may apply \cite[Corollary 3.16]{Nardi_2015}. It states that if a typical tube $T_A(\sigma)$ avoids a cycle deeper than $C_A(\sigma)$, then we may obtain an estimate
    \[
        \limb \PP_\sigma^\beta \left[ e^{\beta(\Theta(\sigma, A) - \ee)} < \tau_A < e^{\beta(\Theta(\sigma, A) + \ee)}\right] = 1
    \]
    for all $\ee > 0$ where $\Theta(\sigma, A)$ is the depth of the cycle $C_A(\sigma)$. The transition $\bf2 \to \bf3$ clearly satisfies this condition since $\bf2$ has the maximal stability. In the case of transition $\bf1 \to \bf3$, we saw in the previous proof that the tube of typical paths avoids \bf2. Since \bf1 is the state with the second greatest stability level, as proved in Theorem \ref{thm:stab2}, 
    transition $\bf1 \to \bf3$ also satisfies the required condition. Hence, we have the desired results.
\end{proof}

\section{Potential Theory and Proofs}
\subsection{Notations and preliminaries}
The potential theory provides a way to compute the prefactor of the mean hitting time. We recall several concepts from potential theory. Given a function $f: \XX \to \RR$, the \emph{Dirichlet form} $\mathcal{D}_\beta(f)$ is defined as
\[
    \mathcal{D}_\beta(f) = \frac{1}{2} \sum_{\sigma, \eta \in \XX} \mu_\beta(\sigma)c_\beta(\sigma, \eta) \left[f(\sigma) - f(\eta)\right]^2.
\]
For disjoint subsets $A, B \subseteq \XX$, define the \emph{capacity} between $A$ and $B$ as
\[
    \capa(A, B) = \inf_{h:\XX \to \RR} \mathcal{D}_\beta(h)
\]
where the infimum runs over a function $h$ satisfying $h|_A = 1$ and $h|_B = 0$. The infimum attains a unique minimum at the \emph{potential function} $h_{A, B}$, which is defined as
\[
    h_{A, B}(\sigma) = \PP_{\sigma}^\beta[\tau_A < \tau_B]
\]
for $\sigma \notin A \cup B$.

Recall from \cite[Proposition 6.10]{Beltr_n_2010} the following proposition.
\begin{proposition}
    \label{prop:EKgen}
    Let $A$ and $B$ be two disjoint subsets of $\XX$. For a $\mu$-integrable function $g:E \to \RR$, we have
    \[
        \EE_{\nu_{AB}}^\beta \left[\int_0^{\tau_B}g(X_t)dt\right] = \frac{\inn{g, h_{A,B}}_\mu}{\capa(A, B)}.
    \]
    Here, $\nu_{AB}$ is the equibilium measure on $A$ defined as
    \[
        \nu_{AB}(\sigma) = \frac{\mu_\beta(\sigma)\PP_\sigma^\beta[\tau_B^+ < \tau_A^+]}{\capa(A,B)}.
    \]
\end{proposition}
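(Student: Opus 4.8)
This is Proposition \ref{prop:EKgen}, which is quoted from \cite{Beltr_n_2010}; the plan is to reproduce its proof, resting on two classical ingredients: the Feynman--Kac characterization of the occupation-time functional as the solution of a Poisson problem, and the $\mu$-reversibility of the dynamics, which makes the generator self-adjoint in $L^2(\mu)$ and permits an integration by parts. Write $L$ for the generator, $(Lf)(\sigma)=\sum_\eta c_\beta(\sigma,\eta)[f(\eta)-f(\sigma)]$, and set $w(\sigma)=\EE_\sigma^\beta\!\left[\int_0^{\tau_B}g(X_t)\,dt\right]$ for $g:\XX\to\RR$. Conditioning on the first jump out of $\sigma$ (equivalently, optional stopping for the martingale $w(X_{t\wedge\tau_B})+\int_0^{t\wedge\tau_B}g(X_s)\,ds$) shows that $w$ is the solution of $(Lw)(\sigma)=-g(\sigma)$ for $\sigma\in\XX\setminus B$ with $w|_B=0$, while the equilibrium potential $h_{A,B}$ recalled above is the solution of $(Lh_{A,B})(\sigma)=0$ for $\sigma\in\XX\setminus(A\cup B)$ with $h_{A,B}|_A=1$, $h_{A,B}|_B=0$.

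The next step is a Green's identity. Since $\mu_\beta(\sigma)c_\beta(\sigma,\eta)$ is symmetric, $L$ is self-adjoint, so $\inn{Lh_{A,B},w}_\mu=\inn{h_{A,B},Lw}_\mu$. Evaluating both sides by splitting $\XX=A\sqcup B\sqcup(A\cup B)^c$ and using the two boundary value problems — $Lh_{A,B}$ vanishes on $(A\cup B)^c$, both $h_{A,B}$ and $w$ vanish on $B$, $Lw=-g$ off $B$, and $h_{A,B}=1$ on $A$ — kills every block except $A$, leaving
\[
    -\sum_{\sigma\in A}\mu_\beta(\sigma)(Lh_{A,B})(\sigma)\,w(\sigma)=\inn{g,h_{A,B}}_\mu .
\]

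It remains to identify the left-hand coefficient with the capacity times the equilibrium measure. Put $\mathbb{M}_{A,B}(\sigma)\coloneqq-\mu_\beta(\sigma)(Lh_{A,B})(\sigma)$ for $\sigma\in A$; this is nonnegative because $h_{A,B}$ attains its maximum value $1$ on $A$. Conditioning on the first jump out of $\sigma\in A$ and using $h_{A,B}(\eta)=\PP_\eta^\beta[\tau_A<\tau_B]$ expresses $\mathbb{M}_{A,B}(\sigma)$ through $\mu_\beta(\sigma)\PP_\sigma^\beta[\tau_B^+<\tau_A^+]$, i.e. through $\nu_{AB}$; summing over $\sigma\in A$ together with $\capa(A,B)=\mathcal D_\beta(h_{A,B})=-\inn{Lh_{A,B},h_{A,B}}_\mu$ and $h_{A,B}|_A=1$ gives $\sum_{\sigma\in A}\mathbb{M}_{A,B}(\sigma)=\capa(A,B)$, fixing the normalization so that $\mathbb{M}_{A,B}(\sigma)=\capa(A,B)\,\nu_{AB}(\sigma)$ and $\nu_{AB}$ is a probability measure on $A$. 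Substituting into the displayed identity, dividing by $\capa(A,B)$, and recalling the definition of $w$ yields
\[
    \EE_{\nu_{AB}}^\beta\!\left[\int_0^{\tau_B}g(X_t)\,dt\right]=\sum_{\sigma\in A}\nu_{AB}(\sigma)\,w(\sigma)=\frac{\inn{g,h_{A,B}}_\mu}{\capa(A,B)} .
\]

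The boundary-term bookkeeping in the Green's identity is routine; the delicate point is the last step, namely keeping track of the holding-rate factors $\lambda(\sigma)=\sum_\eta c_\beta(\sigma,\eta)$ of the continuous-time chain so that $\mathbb{M}_{A,B}$ matches \emph{exactly} the equilibrium measure $\nu_{AB}$ of \cite{Beltr_n_2010} (in particular, so that $\nu_{AB}$ integrates to $1$ on $A$) and the capacity identity holds with the correct normalization. An alternative route is a last-exit (time-reversal) decomposition of $\int_0^{\tau_B}g(X_t)\,dt$ under $\nu_{AB}$, which produces the same formula more probabilistically but demands the same care with reversibility.
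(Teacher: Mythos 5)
The paper does not prove Proposition~\ref{prop:EKgen}; it simply cites it from \cite[Proposition~6.10]{Beltr_n_2010}. So there is no in-paper argument to compare against. Your reconstruction is the standard one — Poisson-problem characterization of the occupation-time functional, $L^2(\mu)$ self-adjointness of the generator, Green's identity killing all blocks except $A$, and identification of the boundary term with the equilibrium measure — and it is correct; it is essentially the proof presented in \cite{Beltr_n_2010} and in Chapter~7 of \cite{bovier2015metastability}.

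Your cautionary remark at the end about holding rates is in fact the one real subtlety, and you are right to be suspicious. Running your argument precisely gives, for $\sigma\in A$,
\[
-\mu_\beta(\sigma)(Lh_{A,B})(\sigma)
=\mu_\beta(\sigma)\,\lambda(\sigma)\,\PP_\sigma^\beta[\tau_B^+<\tau_A^+],
\qquad
\lambda(\sigma)=\sum_\eta c_\beta(\sigma,\eta),
\]
and it is \emph{this} quantity, normalized by $\capa(A,B)$, that is the equilibrium measure appearing on the left of the identity. With the Metropolis rates \eqref{eq:CTMC} one has $\lambda(\sigma)\not\equiv 1$ (for example $\lambda(\bf2)$ and $\lambda(\bf3)$ are exponentially small in $\beta$), so the display of $\nu_{AB}$ in the statement as quoted is missing the $\lambda(\sigma)$ factor. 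This discrepancy is harmless in every use made of the proposition in the paper, because it is always applied with $A$ a singleton, in which case $\nu_{AB}$ is forced to be the Dirac mass at that point regardless of the unnormalized weight; but as a statement for general $A$ your version with the $\lambda(\sigma)$ factor is the correct one, not the one printed. So the proposal is sound; if anything it is more careful than the statement it is proving.
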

By putting $A = \{\eta\}$, $B = \{\eta'\}$, and $g$ as a constant function, we obtain a formula on the mean hitting time.
\begin{proposition}
    \label{prop:EKcap}
    For any $\eta, \eta' \in \XX$, we have
    \[
        \EE_{\eta}^\beta[\tau_{\eta'}] = \frac{1}{\capa(\eta, \eta')} \sum_{\sigma \in \XX} \mu_\beta(\sigma)\PP_\sigma^\beta[\tau_{\eta} < \tau_{\eta'}].
    \]
\end{proposition}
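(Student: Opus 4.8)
The plan is to obtain Proposition \ref{prop:EKcap} as a direct specialization of Proposition \ref{prop:EKgen} to the case of two singletons. First I would apply Proposition \ref{prop:EKgen} with $A = \{\eta\}$, $B = \{\eta'\}$, and $g \equiv 1$, which is trivially $\mu$-integrable since $\XX$ is finite. Because $\int_0^{\tau_B} g(X_t)\,dt = \tau_{\eta'}$ for this choice of $g$, this yields
\[
    \EE_{\nu_{AB}}^\beta[\tau_{\eta'}] = \frac{\langle 1, h_{\{\eta\},\{\eta'\}}\rangle_\mu}{\capa(\eta, \eta')}.
\]

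The next step is to identify the two objects appearing here. Since $\nu_{AB}$ is a probability measure supported on $A = \{\eta\}$, it must equal the Dirac mass at $\eta$; equivalently, one reads off from its definition that $\nu_{AB}(\eta) = \mu_\beta(\eta)\PP_\eta^\beta[\tau_{\eta'}^+ < \tau_\eta^+]/\capa(\eta,\eta')$ and invokes the standard probabilistic representation $\capa(\eta,\eta') = \mu_\beta(\eta)\PP_\eta^\beta[\tau_{\eta'}^+ < \tau_\eta^+]$ of the capacity between two points. Hence the left-hand side is exactly $\EE_\eta^\beta[\tau_{\eta'}]$.

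It remains to rewrite the numerator. By definition $\langle 1, h_{\{\eta\},\{\eta'\}}\rangle_\mu = \sum_{\sigma \in \XX}\mu_\beta(\sigma)h_{\{\eta\},\{\eta'\}}(\sigma)$, where $h_{\{\eta\},\{\eta'\}}(\sigma) = \PP_\sigma^\beta[\tau_\eta < \tau_{\eta'}]$ for $\sigma \notin \{\eta, \eta'\}$, while $h_{\{\eta\},\{\eta'\}} \equiv 1$ on $\{\eta\}$ and $\equiv 0$ on $\{\eta'\}$. Since trivially $\PP_\eta^\beta[\tau_\eta < \tau_{\eta'}] = 1$ and $\PP_{\eta'}^\beta[\tau_\eta < \tau_{\eta'}] = 0$, the identity $h_{\{\eta\},\{\eta'\}}(\sigma) = \PP_\sigma^\beta[\tau_\eta < \tau_{\eta'}]$ in fact holds for every $\sigma \in \XX$. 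Substituting this into the displayed equation gives the claim.

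This proof is essentially a bookkeeping exercise; the only two points that deserve an explicit line are that the equilibrium measure $\nu_{AB}$ collapses to a point mass when $A$ is a singleton, and that the prescribed boundary values of the potential function $h_{\{\eta\},\{\eta'\}}$ on $\{\eta\}$ and $\{\eta'\}$ are consistent with the hitting-probability formula used to define it off $A \cup B$. There is no genuine obstacle.
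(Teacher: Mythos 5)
Your proposal is exactly the route the paper takes: specialize Proposition \ref{prop:EKgen} to $A=\{\eta\}$, $B=\{\eta'\}$, $g\equiv 1$, note that the equilibrium measure $\nu_{AB}$ on a singleton is the point mass at $\eta$ (via the standard identity $\capa(\eta,\eta')=\mu_\beta(\eta)\PP_\eta^\beta[\tau_{\eta'}^+<\tau_\eta^+]$), and unwind $\langle 1,h_{\eta,\eta'}\rangle_\mu$ using the boundary values of the equilibrium potential. The paper states this specialization without elaboration; your write-up simply makes the bookkeeping explicit and is correct.
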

The next two lemmas give an estimation of the terms appearing in the previous proposition. We refer to \cite[Lemma 8.4]{bovier2015metastability} for the first lemma, so-called the \emph{renewal estimate}.
\begin{lemma}
    \label{lem:pot_est} For two disjoint sets $A, B \subseteq \XX$, we have that
    \[
        \PP_\sigma^\beta[\tau_A < \tau_B] \le \frac{\capa(\sigma, A)}{\capa(\sigma, B)}.
    \]
\end{lemma}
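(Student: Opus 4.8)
The plan is to reduce the inequality to an elementary statement about the excursions of the chain away from $\sigma$, using the probabilistic representation of the capacity of a single point. Recall that for any $D \subseteq \XX$ with $\sigma \notin D$ one has
\[
    \capa(\sigma, D) = m_\sigma\,\PP_\sigma^\beta[\tau_D < \tau_\sigma^+],
\]
where $\capa(\sigma, D)$ abbreviates $\capa(\{\sigma\}, D)$ and the constant $m_\sigma > 0$ depends only on $\sigma$, not on $D$; this is the singleton case of the equilibrium-measure identity underlying Proposition \ref{prop:EKgen} (see also \cite[Lemma 8.4]{bovier2015metastability}). Since $m_\sigma$ is common to $A$ and $B$ it cancels in the ratio, and the lemma becomes equivalent to
\[
    \PP_\sigma^\beta[\tau_A < \tau_B] \le \frac{\PP_\sigma^\beta[\tau_A < \tau_\sigma^+]}{\PP_\sigma^\beta[\tau_B < \tau_\sigma^+]},
\]
where both probabilities in the denominator are strictly positive since the chain is irreducible on the finite set $\XX$.

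To prove this, I would decompose the trajectory started at $\sigma$ into its successive excursions between consecutive visits to $\sigma$, which are i.i.d.\ by the strong Markov property. Call an excursion \emph{decisive} if it reaches $A \cup B$ before returning to $\sigma$. Almost surely $\tau_{A \cup B} < \infty$, so there is a first decisive excursion, and whether $A$ or $B$ is hit first is settled within it; the earlier, non-decisive excursions return to $\sigma$ without touching $A \cup B$ and hence are irrelevant. Consequently
\[
    \PP_\sigma^\beta[\tau_A < \tau_B] = \PP_\sigma^\beta[\tau_A < \tau_B \mid \tau_{A \cup B} < \tau_\sigma^+] = \frac{\PP_\sigma^\beta[\tau_A < \tau_B \wedge \tau_\sigma^+]}{\PP_\sigma^\beta[\tau_{A \cup B} < \tau_\sigma^+]}.
\]
Now I would bound the two factors separately. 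For the numerator, $\{\tau_A < \tau_B \wedge \tau_\sigma^+\} \subseteq \{\tau_A < \tau_\sigma^+\}$, so it is at most $\PP_\sigma^\beta[\tau_A < \tau_\sigma^+]$. For the denominator, $B \subseteq A \cup B$ gives $\tau_{A\cup B} \le \tau_B$, hence $\{\tau_B < \tau_\sigma^+\} \subseteq \{\tau_{A\cup B} < \tau_\sigma^+\}$ and the denominator is at least $\PP_\sigma^\beta[\tau_B < \tau_\sigma^+]$. Dividing yields exactly the displayed bound, and the lemma follows.

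There is no serious obstacle here; the proof is essentially bookkeeping. The two points that require care are (i) justifying that the non-decisive excursions do not bias which of $A$, $B$ is reached first --- this is precisely the strong Markov property at the return times to $\sigma$ --- and (ii) checking that the conditioning event $\{\tau_{A\cup B} < \tau_\sigma^+\}$ has positive probability, which again follows from irreducibility of the chain on $\XX$. One could alternatively avoid quoting the capacity--probability identity and run the same excursion estimate directly from the variational definition of $\capa$, but invoking that standard identity is the shortest route.
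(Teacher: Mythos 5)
The paper does not give its own proof of this lemma; it simply cites \cite[Lemma~8.4]{bovier2015metastability}. Your argument is correct and is in fact the standard route to this renewal estimate: the representation $\capa(\sigma,D)=m_\sigma\,\PP_\sigma^\beta[\tau_D<\tau_\sigma^+]$ with a $D$-independent constant reduces the claim to a ratio of escape probabilities, and your excursion decomposition at successive return times to $\sigma$, giving
\[
\PP_\sigma^\beta[\tau_A<\tau_B]=\frac{\PP_\sigma^\beta[\tau_A<\tau_B\wedge\tau_\sigma^+]}{\PP_\sigma^\beta[\tau_{A\cup B}<\tau_\sigma^+]},
\]
followed by the two monotone inclusions, is exactly the proof one finds in the cited reference. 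The only implicit hypothesis worth flagging explicitly is $\sigma\notin A\cup B$ (needed for both capacities to be defined and for $\tau_\sigma^+$ to make sense in the decomposition), which is the setting in which the paper actually invokes the lemma.
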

We refer to \cite[Lemma 16.11]{bovier2015metastability} for the following lemma.
\begin{lemma}
    \label{lem:cap_est} For two disjoint sets $A, B \subseteq \XX$, there exists $0 < C_1 \le C_2 < \infty$ such that
    \[
        C_1 < e^{\beta\Phi(A,B)} Z_\beta \capa(A, B) < C_2
    \]
    for all $\beta \in (0, \infty)$.
\end{lemma}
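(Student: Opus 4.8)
The plan is to sandwich $\capa(A,B)$ between two $\beta$-independent multiples of $Z_\beta^{-1}e^{-\beta\Phi(A,B)}$ by invoking the two complementary variational characterizations of capacity: the Dirichlet principle for the upper bound and a single-path (flow/Thomson) estimate for the lower bound. Throughout I would write $\Phi^\star=\Phi(A,B)$ and use the reversibility identity $\mu_\beta(\sigma)c_\beta(\sigma,\eta)=Z_\beta^{-1}e^{-\beta\max(H(\sigma),H(\eta))}$ recorded in Section \ref{sec:model}, together with the fact that $|\XX|$ does not depend on $\beta$.

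For the upper bound I would exhibit a cheap test function. Set $\hat A:=\{\sigma\in\XX:\Phi(\sigma,A)<\Phi^\star\}$, the basin of $A$ lying strictly below the communication height, and take $h=\one_{\hat A}$. Then $A\subseteq\hat A$ and $B\cap\hat A=\emptyset$, so $h$ is admissible in $\capa(A,B)=\inf\mathcal{D}_\beta(\cdot)$. The key combinatorial point is that whenever $\{\sigma,\eta\}$ is an edge with $\sigma\in\hat A$ and $\eta\notin\hat A$, prepending that edge to an optimal path from $\sigma$ to $A$ gives $\Phi^\star\le\Phi(\eta,A)\le\max(\Phi(\sigma,A),H(\sigma),H(\eta))$, and since $\Phi(\sigma,A)<\Phi^\star$ this forces $\max(H(\sigma),H(\eta))\ge\Phi^\star$; hence such an edge contributes at most $Z_\beta^{-1}e^{-\beta\Phi^\star}$ to the Dirichlet form. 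As there are at most $|\XX|^2$ ordered pairs of adjacent states,
\[
\capa(A,B)\le \mathcal{D}_\beta(h)\le \tfrac12|\XX|^2\,Z_\beta^{-1}e^{-\beta\Phi^\star},
\]
which delivers $C_2$.

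For the lower bound I would use the dual principle. Choose an optimal path $\gamma=(\gamma_0,\dots,\gamma_n)$ from $A$ to $B$ with $\max_i H(\gamma_i)=\Phi^\star$, taken self-avoiding so that $n\le|\XX|$, and push a unit flow along $\gamma$. The Thomson/effective-resistance bound then gives
\[
\capa(A,B)\ge \Big(\sum_{i=0}^{n-1}\frac{1}{\mu_\beta(\gamma_i)c_\beta(\gamma_i,\gamma_{i+1})}\Big)^{-1}=\Big(Z_\beta\sum_{i=0}^{n-1}e^{\beta\max(H(\gamma_i),H(\gamma_{i+1}))}\Big)^{-1}\ge \frac{1}{|\XX|}\,Z_\beta^{-1}e^{-\beta\Phi^\star},
\]
since each summand is at most $Z_\beta e^{\beta\Phi^\star}$. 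This yields $C_1$, and as $C_1=|\XX|^{-1}$ and $C_2=|\XX|^2/2$ carry no dependence on $\beta$, the estimate holds for all $\beta\in(0,\infty)$ (shrink $C_1$ and enlarge $C_2$ slightly to make the inequalities strict).

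There is no real obstacle here; the argument is routine. The only places requiring a moment of care are verifying the combinatorial claim that boundary edges of $\hat A$ have $\max$-energy at least $\Phi^\star$ — this is exactly where the definition of communication height enters — and checking that the constants are genuinely free of $\beta$, which is automatic since both displayed bounds are purely combinatorial. One also tacitly uses that $H(a)<\Phi^\star$ for $a\in A$ and $H(b)<\Phi^\star$ for $b\in B$, which is immediate in all the applications of this lemma in the paper, where $A$ and $B$ are among $\{\mathbf1,\mathbf2,\mathbf3\}$; in general one first replaces $A$ and $B$ by their bottoms $\FF(A),\FF(B)$.
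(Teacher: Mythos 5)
The paper does not prove this lemma itself; it cites \cite[Lemma 16.11]{bovier2015metastability}, and your two-sided argument (Dirichlet principle with an indicator test function for the upper bound, a single self-avoiding optimal path with the series/Thomson bound for the lower bound) is exactly that standard proof, with correct $\beta$-free constants. The lower bound is fine as written.

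There is, however, one genuine soft spot in the upper bound: admissibility of $h=\one_{\hat A}$ with $\hat A=\{\sigma:\Phi(\sigma,A)<\Phi(A,B)\}$. Since $\Phi(a,A)=H(a)$, you need $H(a)<\Phi(A,B)$ for every $a\in A$, which can fail for general disjoint sets — and, contrary to your closing remark, it can fail even in this paper's own uses of the lemma: in Proposition \ref{prop:neg} and Lemma \ref{lem:ekdom} it is applied to pairs such as $(\sigma,\mc M')$ where a downhill path from $\sigma$ may give $\Phi(\sigma,\mc M')=H(\sigma)$, so $\sigma\notin\hat A$ and your test function is not admissible. Your proposed repair, replacing $A,B$ by $\FF(A),\FF(B)$, does not work for this half of the estimate: shrinking $A$ only decreases the capacity ($\capa(\FF(A),B)\le\capa(A,B)$), so it bounds the wrong side, and the degenerate equality $H(a)=\Phi(A,B)$ can persist for the bottom anyway. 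The correct fix is trivial: take $h=\one_{A\cup\hat A}$. Then $B\cap(A\cup\hat A)=\emptyset$ as before, and for a boundary edge $\{\sigma,\eta\}$ with $\sigma\in A\cup\hat A$, $\eta\notin A\cup\hat A$ one still gets $\max(H(\sigma),H(\eta))\ge\Phi(A,B)$: if $\sigma\in A$ this follows from $\Phi(A,B)\le\Phi(\eta,A)\le\max(H(\eta),H(\sigma))$, and if $\sigma\in\hat A\setminus A$ your original comparison argument applies verbatim. (Alternatively, prove the upper bound for singletons and use subadditivity of capacity in the first argument.) With this one-line correction the proof is complete and matches the cited source.
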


\subsection{Capacity estimation}
\label{subsec:4.2}
In this section, we estimate the capacities between \bf1, \bf2, and \bf3. This is based on the work of \cite{Bovier_2002}, which computes the prefactor of the Ising model with an external field. However, due to the convenience of notations, we refer to the proof given in Section 16.3 of \cite{bovier2015metastability}. Although the statements found in \cite{bovier2015metastability} are between unique metastable and stable states, we may generalize them into arbitrary states by imposing some nice conditions. With the conditions below, the proof of the statements is merely the same.

Let $\xi$ and $\xi'$ be arbitrary distinct states in $\XX$, which appear to be metastable and stable states in the original works, and define $\Gamma^\star = \Phi(\xi, \xi') - H(\xi)$. Let $\mc G$ be a subset of $\XX$ with the following property.
\begin{enumerate}[(1), leftmargin=*]
    \item For all $\sigma \in \mc G$, $H(\sigma) = \Gamma^\star$.
    \item For all $\gamma \in \Omega_{\xi,\xi'}$, $\gamma \cap \mc G \neq \emptyset$.
    \item For all $\sigma \in \mc G$, there exists a path $\{\gamma_i\}_{i=1}^n \in \Omega_{\xi, \xi'}$ passing $\sigma$ such that $H(\gamma_i) < \Gamma^\star$ for all $\gamma_i \neq \sigma$.
\end{enumerate}
We may directly see from the definition that $\mc G$ is the unique minimal gate from $\xi$ to $\xi'$. Define the following subsets of $\XX$.
\begin{itemize}[leftmargin=*]
    \item $S^\star$ is obtained from $\XX$ by removing all vertices $\sigma$ with $H(\sigma) > \Gamma^\star$ and edges adjacent to these vertices, then taking the connected component of $m$ and $s$.
    \item $S^{\star\star}$ is obtained from $S^\star$ by removing $\mc G$.
\end{itemize}
Consider the connected components of $S^{\star\star}$. Note that by property (2) of $\mc G$, $\xi$ and $\xi'$ lie in different components. Let $S_\xi$ and $S_{\xi'}$ be the connected components containing $\xi$ and $\xi'$, respectively. Denote other components by $S_i$, $i = 1, \ldots, m$. Then we have
\begin{lemma} (\cite[Lemma 16.16]{bovier2015metastability})
    \label{lem:bov1}
    Recall the notation $h_{\xi,\xi'}(\sigma) = \PP_\sigma^\beta[\tau_\xi < \tau_{\xi'}]$.
    There exist constants $C < \infty$ and $\delta > 0$ such that for any two states $\sigma, \sigma'$ in a same component $S_i$,
    \[
        \abs{h_{\xi,\xi'}(\sigma) - h_{\xi,\xi'}(\sigma')} \le C e^{-\beta \delta}
    \]
    for all $\beta \in (0, \infty)$. 
\end{lemma}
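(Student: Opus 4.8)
The plan is to exploit the fact that each component $S_i$ (for $i = 1,\ldots,m$) is a region lying strictly below the saddle height $\Gamma^\star$ and disconnected from $\xi$ and $\xi'$ once the gate $\mc G$ is removed. The key point is that to leave $S_i$ and reach either $\xi$ or $\xi'$, the process must climb over the energy threshold $\Gamma^\star$, since every path from $S_i$ to $\XX\setminus S^\star$ passes through a vertex of energy exceeding $\Gamma^\star$, and every path from $S_i$ to $\xi$ or $\xi'$ within $S^\star$ must cross $\mc G$ (but exiting through $\mc G$ also requires visiting a height-$\Gamma^\star$ configuration). Consequently, the harmonic function $h_{\xi,\xi'}$ restricted to $S_i$ is governed, up to exponentially small corrections, by the bottom of the well $S_i$: the process started anywhere in $S_i$ will, with probability $1 - o_\beta(1)$, first descend to $\mc F(S_i)$ and only afterwards make the (exponentially rare) excursion that exits $S_i$, so the value of $h_{\xi,\xi'}$ at $\sigma$ and at $\sigma'$ is determined by the same exit mechanism.

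The steps I would carry out, in order, are as follows. First, observe that $S_i$ together with its external boundary forms (a union of) cycles of depth at least $\delta_0 := \min_{\partial S_i} H - \max_{S_i} H > 0$, which is strictly positive because $S^{\star\star}$ was obtained by deleting exactly the configurations of energy $\ge \Gamma^\star$ that separate the components (so the boundary of $S_i$ sits at energy $\ge \Gamma^\star$ while the interior is $< \Gamma^\star$). Second, apply the strong Markov property at $\tau_{\partial S_i}$: write
\[
    h_{\xi,\xi'}(\sigma) = \EE_\sigma^\beta\bigl[h_{\xi,\xi'}(X(\tau_{\partial S_i}))\bigr],
\]
and the same with $\sigma'$ in place of $\sigma$. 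Third, invoke Proposition \ref{prop:exit}: both $\sigma$ and $\sigma'$ lie in $S_i$ (or in a cycle $C \subseteq S_i$ containing $\mc F(S_i)$), so the exit distributions $\PP_\sigma^\beta[X(\tau_{\partial S_i}) \in \cdot\,]$ and $\PP_{\sigma'}^\beta[X(\tau_{\partial S_i}) \in \cdot\,]$ both concentrate on $\mc F(\partial S_i)$ with error $e^{-k\beta}$, and on $\mc F(\partial S_i)$ they coincide up to a further $e^{-k'\beta}$ factor by the standard cycle-exit comparison (Proposition \ref{prop:exit}(a)–(b) give matching exponential lower and upper bounds, whose ratio is $1 + O(e^{-\delta\beta})$). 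Fourth, since $h_{\xi,\xi'}$ is bounded by $1$ everywhere, the difference of the two expectations is bounded by the total-variation distance of the two exit laws, which is $O(e^{-\beta\delta})$; choosing $\delta$ as the minimum of the various exponential rates and $C$ as the corresponding constant yields the claim uniformly in $\beta$.

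The main obstacle is the bookkeeping needed to guarantee that $\sigma$ and $\sigma'$ actually sit inside a common cycle within $S_i$ to which Proposition \ref{prop:exit} applies cleanly; a priori $S_i$ need not itself be a cycle (its bottom $\mc F(S_i)$ could be attained at several places, or $S_i$ could decompose into several maximal subcycles joined near energy $\Gamma^\star$). The fix is to route both points to $\mc F(S_i)$ first: one shows $\Phi(\sigma, \mc F(S_i)) < \Gamma^\star$ for every $\sigma \in S_i$ (otherwise $\sigma$ would be separated from the bottom by a height-$\Gamma^\star$ barrier, contradicting that $S_i$ is a connected component of $S^{\star\star}$ whose vertices all have energy $< \Gamma^\star$), hence $S_i$ is contained in a single cycle of the partition $\mc M(\XX\setminus(\xi\cup\xi'))$, or at worst the needed estimate follows by a finite chain of cycle-exit comparisons whose number is bounded independently of $\beta$. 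Once this structural point is pinned down the rest is the routine exit-law comparison sketched above, and the constants $C, \delta$ can be taken uniform over the finitely many components $S_1,\ldots,S_m$.
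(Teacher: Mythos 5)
Your approach is genuinely different from the paper's. The paper proves this in essentially one line using the \emph{renewal estimate} (Lemma \ref{lem:pot_est}) together with the \emph{capacity bound} (Lemma \ref{lem:cap_est}): since $\sigma,\sigma'$ lie in the same component $S_i$ one has $\Phi(\sigma,\sigma') < \Phi(\xi,\xi')\le \min\{\Phi(\sigma,\xi),\Phi(\sigma,\xi')\}$, so both $\PP_\sigma^\beta[\tau_\xi<\tau_{\sigma'}]\le \capa(\sigma,\xi)/\capa(\sigma,\sigma')$ and $\PP_\sigma^\beta[\tau_{\xi'}<\tau_{\sigma'}]\le \capa(\sigma,\xi')/\capa(\sigma,\sigma')$ are exponentially small, and a first-step/renewal decomposition of $h_{\xi,\xi'}(\sigma)$ at the hitting time of $\{\sigma',\xi,\xi'\}$ gives $|h_{\xi,\xi'}(\sigma)-h_{\xi,\xi'}(\sigma')|\le \PP_\sigma^\beta[\tau_\xi<\tau_{\sigma'}]+\PP_\sigma^\beta[\tau_{\xi'}<\tau_{\sigma'}]=O(e^{-\delta\beta})$. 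You instead compare the exit distributions from $\tau_{\partial S_i}$, which is a legitimate alternative idea in principle, but there is a concrete gap in the step where you invoke Proposition \ref{prop:exit}.

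Specifically, Proposition \ref{prop:exit} does \emph{not} give what you claim in your step 3/4. Part (a) says the exit distributions from any two points of a cycle both concentrate on $\FF(\partial C)$, which controls the contribution \emph{off} $\FF(\partial C)$ but says nothing about how the two distributions compare \emph{on} $\FF(\partial C)$. Part (b) gives only a one-sided lower bound $\PP_\eta^\beta[X(\tau_{\partial C})=\zeta]\ge e^{-\beta(H(\zeta)-H(\FF(C))+\ee)}$, which for $\zeta\in\FF(\partial C)$ is of size $e^{-\beta(\Gamma(C)+\ee)}$ — vastly smaller than the $\Theta(1)$ probabilities one actually wants to compare, and with no matching upper bound. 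There is no ``ratio $1+O(e^{-\delta\beta})$'' available from (a)--(b); if $\FF(\partial C)$ has two or more elements the two exit laws could a priori differ by an order-one amount in total variation. What your argument actually needs is the (true, but unstated in this paper) thermalization statement that the process started anywhere in a cycle $C$ reaches $\FF(C)$ before $\partial C$ with probability $1-e^{-k\beta}$, from which TV-closeness of the exit laws to that of $\FF(C)$ follows; as written, the step has no support. Given that Lemmas \ref{lem:pot_est} and \ref{lem:cap_est} are already in place and do the job directly, the paper's route is both shorter and does not need the structural discussion of whether $S_i$ is a single cycle.
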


\begin{lemma} (\cite[Lemma 16.17]{bovier2015metastability})
    \label{lem:bov2}
    The following estimate on the capacity holds as $\beta \to \infty$.
    \[
        Z_\beta \capa(\xi, \xi') = [1 + o_\beta(1)]\Theta e^{-\beta \Phi(\xi, \xi')}
    \]
    where
    \[
        \Theta = \min_h \frac{1}{2} \sum_{\sigma, \sigma' \in S^\star} \one_{\{\sigma \sim \sigma'\}} [h(\sigma) - h(\sigma')]^2
    \]
    subject to a function $h: S^\star \to [0, 1]$ satisfying $h|_{S_\xi} = 1$, $h|_{S_{\xi'}} = 0$, and $h|_{S_i}$ constant for each $i = 1, \ldots, m$.
\end{lemma}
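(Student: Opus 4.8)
The plan is to sandwich $Z_\beta\capa(\xi,\xi')$ between $[1\pm o_\beta(1)]\,\Theta\,e^{-\beta\Phi(\xi,\xi')}$ using the two variational descriptions of capacity: the Dirichlet principle $\capa(\xi,\xi') = \inf_h \mc D_\beta(h)$ for the upper bound, and the identity $\capa(\xi,\xi') = \mc D_\beta(h_{\xi,\xi'})$ for the lower one. Two preliminary remarks. Since $H$ takes only finitely many values on $\XX$, there is a fixed $\delta > 0$ with $H(\sigma) \ge \Phi(\xi,\xi') + \delta$ whenever $H(\sigma) > \Phi(\xi,\xi')$. And since $S^\star$ is connected — a path from $\xi$ to $\xi'$ of height $\Phi(\xi,\xi')$ survives the removal of the strictly higher configurations — while $\xi$ and $\xi'$ lie in distinct components $S_\xi \ne S_{\xi'}$ of $S^{\star\star}$, every admissible competitor for $\Theta$ must change its value along such a path, so $\Theta > 0$; as these competitors form a compact family (finitely many free values, each in $[0,1]$), the minimum $\Theta$ is attained at some $h^\star$.

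For the upper bound I would extend $h^\star$ to all of $\XX$: keep its values on $S^\star$, and make it constant (say $0$) on every other connected component of the sublevel set $\{H \le \Phi(\xi,\xi')\}$ and on every configuration of energy above $\Phi(\xi,\xi')$. The edges across which this function varies come in two kinds. An edge $\{\sigma,\sigma'\}$ with both endpoints in $S^\star$ can only be a varying one if it meets $\mc G$ — because $h^\star$ is constant on each component of $S^{\star\star}$ and those components carry no edges between one another — and then the larger of $H(\sigma)$, $H(\sigma')$ equals $\Phi(\xi,\xi')$, so $\mu_\beta(\sigma)c_\beta(\sigma,\sigma') = Z_\beta^{-1}e^{-\beta\Phi(\xi,\xi')}$; consequently the contribution of all $S^\star$-internal edges to $\mc D_\beta(h^\star)$ is exactly $Z_\beta^{-1}e^{-\beta\Phi(\xi,\xi')}\,\Theta$, since $h^\star$ realizes the minimum $\Theta$ and vanishes off $\mc G$. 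An edge touching a configuration of energy $\ge \Phi(\xi,\xi')+\delta$ contributes at most $Z_\beta^{-1}e^{-\beta(\Phi(\xi,\xi')+\delta)}$, and there are only $O_\beta(1)$ of them. Hence $\mc D_\beta(h^\star) = [1+o_\beta(1)]\,\Theta\,Z_\beta^{-1}e^{-\beta\Phi(\xi,\xi')}$ (here $\Theta > 0$ is used), and the Dirichlet principle yields $Z_\beta\capa(\xi,\xi') \le [1+o_\beta(1)]\,\Theta\,e^{-\beta\Phi(\xi,\xi')}$.

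For the lower bound I would retain in $\mc D_\beta(h_{\xi,\xi'})$ only the edges with both endpoints in $S^\star$; on each of these $\max(H(\sigma),H(\sigma')) \le \Phi(\xi,\xi')$, so $\mu_\beta c_\beta \ge Z_\beta^{-1}e^{-\beta\Phi(\xi,\xi')}$ and
\[
    Z_\beta\capa(\xi,\xi') \ \ge\ e^{-\beta\Phi(\xi,\xi')}\,\cdot\,\tfrac12\sum_{\sigma,\sigma'\in S^\star}\one_{\{\sigma\sim\sigma'\}}\,[h_{\xi,\xi'}(\sigma)-h_{\xi,\xi'}(\sigma')]^2 .
\]
Now Lemma \ref{lem:bov1} tells us that on $S^\star$ the potential $h_{\xi,\xi'}$ is within $Ce^{-\beta\delta'}$ of $1$ on $S_\xi$, of $0$ on $S_{\xi'}$, and of a single constant on each $S_i$. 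Replacing $h_{\xi,\xi'}$ by the function taking precisely those constants (while keeping $h_{\xi,\xi'}$ on $\mc G$) produces an admissible competitor for $\Theta$, and since each of the $O_\beta(1)$ summands changes by at most $|a^2-b^2| \le 2|a-b| \le 4Ce^{-\beta\delta'}$, the displayed sum is $\ge \Theta - o_\beta(1)$. Therefore $Z_\beta\capa(\xi,\xi') \ge [1-o_\beta(1)]\,\Theta\,e^{-\beta\Phi(\xi,\xi')}$, and combining the two bounds finishes the proof.

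The one genuinely substantial ingredient is Lemma \ref{lem:bov1}: it is what lets us treat the harmonic function $h_{\xi,\xi'}$ as exactly flat on each shallow valley of $S^{\star\star}$, so that the restricted Dirichlet form collapses onto the finite electrical-network problem defining $\Theta$. The rest is bookkeeping — isolating the $\Theta$-carrying edges (those through the gate $\mc G$, all sitting at the communication height $\Phi(\xi,\xi')$) and checking that both the configurations lying strictly above $\Phi(\xi,\xi')$ and the fluctuations of $h_{\xi,\xi'}$ within the valleys perturb the estimate only by a factor $1+o_\beta(1)$.
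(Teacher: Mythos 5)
Your proposal is correct and takes essentially the same route as the paper, which simply defers to the proof of \cite[Lemma 16.17]{bovier2015metastability}: an upper bound via the Dirichlet principle with the minimizer of the reduced variational problem (extended off $S^\star$) as test function, and a lower bound by restricting the Dirichlet form of the equilibrium potential to $S^\star$ and invoking Lemma \ref{lem:bov1} to replace it by an admissible competitor for $\Theta$. The only cosmetic point is the phrase ``vanishes off $\mc G$,'' which should read that the \emph{increments} of $h^\star$ vanish on edges not meeting $\mc G$.
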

The proof of two lemmas is exactly the same as in \cite{bovier2015metastability}. Lemma \ref{lem:bov1} follows from Lemma \ref{lem:cap_est} and Lemma \ref{lem:pot_est} and that $\Phi(\sigma, \sigma')$ is less than $\Phi(\xi, \xi')$. This implies that the potential function is nearly constant in each $S_i$. Now consider the equation
\[
    \capa(\xi, \xi') = \frac{1}{2Z_\beta} \sum_{\sigma, \sigma' \in \XX} e^{-\beta\Phi(\sigma, \sigma')}[h(\sigma) - h(\sigma')]^2,
\]
Lemma \ref{lem:bov1} implies that the potential function can be estimated by a constant in each component $S_i$. With this in mind, Lemma \ref{lem:bov2} follows by direct computation of Dirichlet form.

Finally, we obtain the capacity estimates between \textbf{1}, \textbf{2}, and \textbf{3} from the previous lemma.
\begin{proposition}
    \label{prop:cap}
    We have
    \begin{align*}
        \capa(\bf1, \bf3) = \kappa_1^{-1} e^{-\beta \Gamma^\star_{13}}(1 + o_\beta(1)) \\
        \capa(\bf2, \bf3) = \kappa_2^{-1} e^{-\beta \Gamma^\star_{23}}(1 + o_\beta(1)) \\
        \capa(\bf1, \bf2 \cup \bf3) = \kappa_1^{-1} e^{-\beta \Gamma^\star_{13}}(1 + o_\beta(1)) \\
        \capa(\bf2, \bf1 \cup \bf3) = \kappa_2^{-1} e^{-\beta \Gamma^\star_{23}}(1 + o_\beta(1)) \\
    \end{align*}
    where
    \begin{align*}
        \kappa_1 = \frac{3}{4(2\ell_c^{13} - 1)}\frac{1}{\abs{\XX}},\quad \kappa_2 = \frac{3}{4(2\ell_c^{23} - 1)}\frac{1}{\abs{\XX}}.
    \end{align*}
\end{proposition}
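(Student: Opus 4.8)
The plan is to apply Lemma \ref{lem:bov2} to each of the four pairs and then carry out the variational computation for $\Theta$, which by Theorem \ref{thm:gate} reduces to the classical Ising droplet computation. First I would verify that the hypotheses on $(\xi, \xi', \mc G)$ required by Lemmas \ref{lem:bov1} and \ref{lem:bov2} are satisfied in each case. For $\capa(\bf1,\bf3)$ take $\xi = \bf1$, $\xi' = \bf3$, $\Gamma^\star = \Gamma_{13}^\star$, and $\mc G = \mc G_{13} = \dect 3 1 {\ell_c^{13}+1}{\ell_c^{13}}$; Theorem \ref{thm:gate} (together with the essentiality argument in its proof, which gives property (3)) shows $\mc G_{13}$ is the unique minimal gate, so conditions (1)--(3) hold. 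For $\capa(\bf2,\bf3)$ use $\mc G_{23}$ analogously. For the pairs with a two-point target, I would take $\xi = \bf1$, $\xi' = \bf2\cup\bf3$: since Theorem \ref{thm:gate} asserts $\mc G_{13}$ is the minimal gate for the transition $\bf1 \to \bf2\cup\bf3$ as well, and $\Phi(\bf1, \bf2\cup\bf3) = \Phi(\bf1,\bf3) = \Phi(\bf1,\bf2) = \Phi_{13}^\star$ by Theorem \ref{thm:gamma}, the same $\mc G_{13}$ works; likewise $\mc G_{23}$ for $\bf2 \to \bf1\cup\bf3$.

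Next I would compute $\Theta$. By Lemma \ref{lem:bov2}, $\Theta$ is the minimal Dirichlet energy (with unit conductances) of a function on the ``low-energy'' graph $S^\star$ that is $1$ on the component $S_\xi$, $0$ on $S_{\xi'}$, and constant on every other component $S_i$. The point is that, by the analysis of the reference path in Appendix \ref{app:a1} and the projection arguments used to prove Theorems \ref{thm:gamma} and \ref{thm:gate}, the saddle structure between $\bf1$ and $\bf3$ is exactly that of the Ising model with field $h_3 - h_1$: the gate $\mc G_{13}$ consists of the quasi-square protocritical droplets $\dect 3 1 {\ell_c^{13}+1}{\ell_c^{13}}$, each connected to $S_{\bf1}$ by removing the protuberance and to $S_{\bf3}$ by growing a row, and the only freedom in the minimizing $h$ is the value at each such saddle configuration. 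One then counts: the number of configurations in $\mc G_{13}$, and for each the number of edges in $S^\star$ joining it to $S_{\bf1}$ versus to $S_{\bf3}$. This is precisely the computation carried out in Section 16.3 of \cite{bovier2015metastability} for the Ising model; the translational and rotational symmetry of the torus contributes the factor $\abs{\XX}$ (number of positions) and the shape-orientation count, and solving the resulting series-parallel network of $2\ell_c^{13}-1$ unit resistors in the relevant channel yields $\Theta = \kappa_1^{-1}$ with $\kappa_1 = \tfrac{3}{4(2\ell_c^{13}-1)}\tfrac{1}{\abs{\XX}}$. I would check that enlarging the target from $\bf3$ to $\bf2\cup\bf3$ does not change $\Theta$: since the typical-path analysis (Lemma \ref{lem:cycle}, Proposition \ref{prop:pot_zero}) shows the component $S_{\bf3}$ of $S^{\star\star}$ is the one reached from the $\mc G_{13}$ side and $\bf2$ sits in its own component separated by energy barriers exceeding $\Gamma_{13}^\star$ only via $\mc G_{13}$ itself, merging $\bf2$ into the $\bf3$-boundary condition leaves the minimizer and its energy unchanged. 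The computations for $\capa(\bf2,\bf3)$ and $\capa(\bf2,\bf1\cup\bf3)$ are identical with $13$ replaced by $23$.

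Finally, combining Lemma \ref{lem:bov2} with $Z_\beta \capa(\xi,\xi') = [1+o_\beta(1)]\,\Theta\, e^{-\beta\Phi(\xi,\xi')}$ and $\Phi(\xi,\xi') = \Gamma_{ij}^\star + H(\xi)$ — noting $H(\bf1) = H(\bf2) = H(\bf3)$ up to the additive field contributions that are absorbed into $Z_\beta$, or more carefully keeping track of $H(\bf i)$ and using $\Gamma_{ij}^\star = \Phi_{ij}^\star - H(\bf i)$ — gives $\capa(\xi,\xi') = \kappa^{-1} e^{-\beta\Gamma_{ij}^\star}(1+o_\beta(1))$ after cancelling $Z_\beta$ against the partition-function normalization built into $\mu_\beta$. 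I expect the main obstacle to be the bookkeeping in the $\Theta$ computation: correctly identifying all connected components $S_i$ of $S^{\star\star}$ on the torus, ruling out that configurations with spin $2$ (or mixed $1$/$2$ clusters) create additional low-energy components that would alter the resistor network, and pinning down the combinatorial factor $\tfrac{3}{4(2\ell_c-1)\abs{\XX}}$ exactly. The factor $3$ in the numerator, in particular, should be traced to the symmetry of the Potts state space (three monochromatic sinks) versus the $\abs{\XX}^{-1}$ normalization, and I would want to double-check it against the $q=2$ Ising prefactor in \cite{Bovier_2002}. The separation of $\bf2$ from the $\bf1 \to \bf3$ channel, which makes the two-point-target capacities equal to the one-point ones, rests entirely on Theorem \ref{thm:stab2} and the tube analysis of Subsection \ref{subsec:path_proof}, so I would invoke those explicitly rather than re-derive them.
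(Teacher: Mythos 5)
Your proposal follows essentially the same route as the paper: verify conditions (1)--(3) for $\mc G_{13}$, $\mc G_{23}$ via Theorem \ref{thm:gate}, apply Lemma \ref{lem:bov2}, observe that enlarging the target to $\bf2\cup\bf3$ (resp.\ $\bf1\cup\bf3$) changes neither the gate nor the component structure, and evaluate $\Theta$ through the Ising-type saddle computation of \cite[Section 16.3]{bovier2015metastability}. The only cosmetic difference is that the paper makes the network computation explicit by classifying gate configurations into two types according to their downhill neighbors (Figure \ref{fig:gate}) and minimizing per saddle, whereas you defer this same series-parallel count to the cited reference; the substance is identical.
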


\begin{figure}
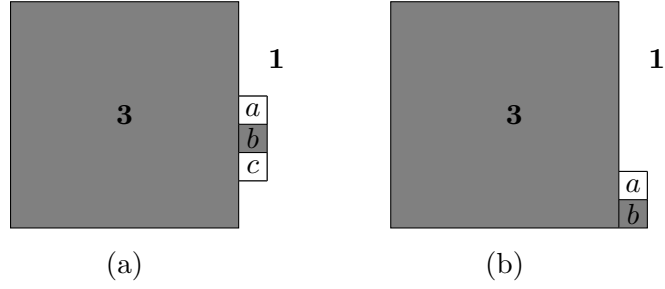

\centering
\ctikzfig{gate1}
\caption{\label{fig:gate} Two types of minimal gates between \bf1 and \bf3. Figure (a) has three adjacent states with lower energy. Figure (b) has two adjacent states with lower energy.}
\end{figure}

\begin{proof}
    For the first and third statements, assign $\mc G = \mc G_{13}$. Otherwise, assign $\mc G = \mc G_{23}$. Then $\mc G$ satisfies the above three conditions. This is followed by Theorem \ref{thm:gate}.

    To apply the previous lemma, we need to determine the connected components of $S^\star \backslash \mc G$. Each component must be adjacent to $\mc G$, so it suffices to examine adjacent states of $\mc G$ with energy no more than $\Gamma^\star$. There are two types of elements in $\mc G$, see Figure \ref{fig:gate}.
    \begin{enumerate}[label=\emph{Type \arabic*}:, leftmargin=*]
        \item Consider Figure \ref{fig:gate}-(a). If we add a different spin, the energy increases by at least $2 - (h_3 - h_1)$. Hence there are exactly three adjacent configurations lowering the energy: flipping the cells $a$, $b$, and $c$ into 3-spin. One lies in $S_m$, and two lie in $S_s$.
        \item Consider Figure \ref{fig:gate}-(b). Two adjacent configurations lower the energy, marked as $a, b$. One lies in $S_m$, and one lies in $S_s$.
    \end{enumerate}
    Therefore $S_m$ and $S_s$ are the only connected components of $S^\star \backslash \mc G$. Now we apply the variation principle to compute the capacity. We have
    \[
        \Theta = \min_{h:\mc G \to \RR} \frac 12 \Big{[}\sum_{\substack{\xi \in \mc G \\ \mc \xi: \text{ type 1}}} (1 - h(\xi))^2 + 2h(\xi)^2 + \sum_{\substack{\xi' \in \mc G \\ \mc \xi': \text{ type 2}}} (1 - h(\xi'))^2 + h(\xi')^2 \Big{]}.
    \]
    There are $2|\XX|$ rectangles of size $(\ell_c - 1) \times \ell_c$ or $(\ell_c - 1) \times \ell_c$, and there are 4 sites of type 1, $2\ell_c - 4$ sites of type 2. By Cauchy-Schwartz inequality, assigning $h \equiv \frac{1}{3}$ on type 1 states and $h \equiv \frac{1}{2}$ on type 2 states gives the minimum
    \[
        \Theta = \left[\frac{1}{2}\cdot 4 + \frac{2}{3}(2\ell_c - 4) \right] \cdot 2\abs{\XX} = \abs{\XX}\frac{4}{3}(2\ell_c - 1).
    \]
    Since $Z_\beta = e^{-\beta H(\bf3)}(1 + o_\beta(1))$, we have the desired results.
\end{proof}

Now, Theorem \ref{thm:EK} follows immediately.
\begin{proof}[Proof of Theorem \ref{thm:EK}]
    By Proposition \ref{prop:EKcap}
    \[
        \EE_2^\beta[\tau_3] = \frac{1}{\capa(\bf2, \bf3)} \sum_{\sigma \in \XX} \mu_\beta(\sigma)\PP_\sigma^\beta[\tau_2 < \tau_3].
    \]
    To bound the right-hand side, we show that the summation is dominated by $\mu_\beta(\bf2)$. For each $\sigma \neq \bf2, \bf3$, we have two cases:

    \vspace{5pt}
    \noindent
    \textbf{(Case 1: $H(\sigma) > H(\bf2)$)} Then 
    \[
        \limb \frac{1}{\mu_\beta(\bf2)} \mu_\beta(\sigma)h_{2,3}(\sigma) \le e^{-\beta(H(\sigma)-H(\bf2))} = 0,
    \]
    so $\mu_\beta(\sigma)h_{2,3}(\sigma)$ is dominated by $\mu_\beta(\bf2)$.

    \vspace{5pt}
    \noindent
    \textbf{(Case 2: $H(\sigma) \le H(\bf2)$)} This implies that $\sigma$ lies outside of the domain of attraction
    \[
        \{\eta: \Phi(\eta, \bf2) < \Phi(\bf2, \bf3)\},
    \]
    so $\Phi(\sigma, \bf2) \ge \Phi(\bf2, \bf3)$. Also, $\mc V_\sigma < \mc V_{23} = \Gamma^\star_{23}$. So we can move to a state with less energy, without visiting states with energy more than $H(\sigma) + \mc V_\sigma$. Since \bf2 has maximal stability, we may repeat this until we reach \bf3. Thus $\Phi(\sigma, \bf3) < H(\sigma) + \Gamma^\star_{23}$ and we have
    \begin{align*}
        \mu_\beta(\sigma)h_{2,3}(\sigma) & \le \mu_\beta(\bf2) e^{\beta(H(\bf2) - H(\sigma))} \frac{\capa(\sigma, \bf2)}{\capa(\sigma, \bf  3)} \\
        & \le \mu_\beta(\bf2) e^{\beta(H(\bf2) - H(\sigma))} e^{-\beta(\Phi(\sigma, \bf2) - \Phi(\sigma, \bf 3))} (1 + O_\beta(1))
    \end{align*}
    by Lemma \ref{lem:cap_est}. The power of the exponent is
    \[
        \beta\left(H(\bf2) - \Phi(\sigma, \bf2) - H(\sigma) + \Phi(\sigma, \bf3)\right) < \beta(H(\bf2) - \Phi(\bf2, \bf3) + \Gamma_{23}^\star) = 0.
    \]
    Hence we again have
    \[
        \limb \frac{1}{\mu_\beta(\bf2)} \mu_\beta(\sigma)h_{2,3}(\sigma) = 0.
    \]
    Gathering two cases gives an estimation
    \begin{align*}
        \EE_{\bf2}^\beta[\tau_{\bf3}]
        = \frac{1}{\capa(\bf2, \bf3)} \mu_\beta(\bf 2)(1 + o_\beta(1))
        = \kappa_2 e^{\beta \Gamma^\star_{23}}(1 + o_\beta(1)).
    \end{align*}
\end{proof}

\subsection{Markov chain model reduction}
\label{subsec:4.3}
In this section, we prove the model reduction stated in Section \ref{subsec:mcmr}. In \cite[Theorem 2.4]{Beltr_n_2010}, it states the sufficient condition to show model reduction: convergence of rate of the trace process, and negligibility of the time spent in non-metastable states. We show two conditions are satisfied in our model with respect to two different speed-up scales. Recall that two speed-up scales $\theta_\beta^1$ and $\theta_\beta^2$ are given by
\[
    \tb^1 = \kappa_1 e^{\beta\Gamma_{13}^\star},\quad \tb^2 = \kappa_2 e^{\beta\Gamma_{23}^\star}.
\]

Recall from \cite[Subsection 2.1]{Beltr_n_2010} the definition of \emph{trace process}. In brief, a trace process on $S \subseteq \XX$ is the original process but the clock stopped outside of $S$. This defines a continuous time Markov chain on $S$, and its jump rate can be computed in terms of capacities.

\begin{proposition}[Theorem \ref{thm:MCMR1}: Convergence of the rate]
Consider the trace process on $\{\bf1,\bf2,\bf3\}$. Let $r_\beta(\bf i, \bf j)$ be the transition rate from $\bf i$ to $\bf j$ of the trace process. Then the scaled jump rate $\theta_\beta^1 r_\beta(\bf i, \bf j)$ converges to
\[
    \limb \theta_\beta^1 r_\beta(\bf i, \bf j) = \begin{cases}
        1 & (\bf i, \bf j) = (\bf1, \bf3) \\
        0 & otherwise.
    \end{cases}
\]
\end{proposition}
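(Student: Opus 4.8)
The plan is to express each trace-process rate $r_\beta(\bf i, \bf j)$ through the standard capacity representation and then read off the six limits from Proposition~\ref{prop:cap}, Proposition~\ref{prop:pot_zero}, and the crude bound of Lemma~\ref{lem:cap_est}. Write $\mc M = \{\bf1, \bf2, \bf3\}$ and $\ld_\beta(\sigma) \coloneqq \sum_{\eta} c_\beta(\sigma, \eta)$. The first step is to record that, since the trace process on $\mc M$ leaves $\bf i$ exactly when the original chain first reaches $\mc M \setminus \{\bf i\}$, its holding rate at $\bf i$ equals $\ld_\beta(\bf i)\,\PP_{\bf i}^\beta[\tau_{\mc M\setminus\{\bf i\}} < \tau_{\bf i}^+]$ and the destination of a jump is distributed as $X_\beta(\tau_{\mc M\setminus\{\bf i\}})$; together with the identity $\mu_\beta(\bf i)\ld_\beta(\bf i)\PP_{\bf i}^\beta[\tau_{\mc M\setminus\{\bf i\}} < \tau_{\bf i}^+] = \capa(\bf i, \mc M\setminus\{\bf i\})$ this gives
\[
    \mu_\beta(\bf i)\, r_\beta(\bf i, \bf j) = \capa(\bf i, \mc M\setminus\{\bf i\})\, \PP_{\bf i}^\beta\!\left[X_\beta(\tau_{\mc M\setminus\{\bf i\}}) = \bf j\right], \qquad \bf i \neq \bf j.
\]
Because $\bf3$ is the unique minimizer of $H$ and $\XX$ is finite, $Z_\beta = e^{-\beta H(\bf3)}(1+o_\beta(1))$ and hence $\mu_\beta(\bf i) = e^{-\beta(H(\bf i) - H(\bf3))}(1+o_\beta(1))$; this is the only auxiliary input.

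Next I would treat the row $\bf i = \bf1$, where $\mc M\setminus\{\bf1\} = \{\bf2, \bf3\}$. Proposition~\ref{prop:cap} gives $\capa(\bf1, \{\bf2,\bf3\}) = \mu_\beta(\bf1)\,\kappa_1^{-1} e^{-\beta\Gamma_{13}^\star}(1+o_\beta(1))$, while Proposition~\ref{prop:pot_zero}, combined with irreducibility on the finite set $\XX$, gives $\PP_{\bf1}^\beta[X_\beta(\tau_{\{\bf2,\bf3\}}) = \bf3] = \PP_{\bf1}^\beta[\tau_{\bf3} < \tau_{\bf2}] = 1 - o_\beta(1)$, hence $\PP_{\bf1}^\beta[X_\beta(\tau_{\{\bf2,\bf3\}}) = \bf2] = o_\beta(1)$. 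Inserting these into the displayed identity gives $r_\beta(\bf1, \bf3) = \kappa_1^{-1} e^{-\beta\Gamma_{13}^\star}(1+o_\beta(1))$ and $r_\beta(\bf1, \bf2) = \kappa_1^{-1} e^{-\beta\Gamma_{13}^\star}o_\beta(1)$, so that $\tb^1 r_\beta(\bf1, \bf3) \to 1$ and $\tb^1 r_\beta(\bf1, \bf2) \to 0$.

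For the rows $\bf i \in \{\bf2, \bf3\}$ I would not track the prefactor at all: it suffices to bound the total holding rate $R_\beta(\bf i) \coloneqq \sum_{\bf j\neq\bf i} r_\beta(\bf i, \bf j) = \mu_\beta(\bf i)^{-1}\capa(\bf i, \mc M\setminus\{\bf i\})$. By Lemma~\ref{lem:cap_est} and $Z_\beta \asymp e^{-\beta H(\bf3)}$ one has $R_\beta(\bf i) \asymp e^{-\beta(\Phi(\bf i,\mc M\setminus\{\bf i\}) - H(\bf i))}$. Using Theorem~\ref{thm:gamma} (and symmetry of $\Phi$),
\begin{align*}
    \Phi(\bf2,\{\bf1,\bf3\}) - H(\bf2) &= \min\big(\Gamma_{13}^\star + (H(\bf1)-H(\bf2)),\ \Gamma_{23}^\star\big), \\
    \Phi(\bf3,\{\bf1,\bf2\}) - H(\bf3) &= \min\big(\Gamma_{13}^\star + (H(\bf1)-H(\bf3)),\ \Gamma_{23}^\star + (H(\bf2)-H(\bf3))\big),
\end{align*}
and since $H(\bf1) > H(\bf2) > H(\bf3)$ and $\Gamma_{23}^\star > \Gamma_{13}^\star$, each right-hand side exceeds $\Gamma_{13}^\star$ by a fixed $\delta > 0$. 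Hence $\tb^1 R_\beta(\bf i) \asymp e^{-\beta\delta} \to 0$, so $\tb^1 r_\beta(\bf i, \bf j) \to 0$ for every $\bf j \neq \bf i$. Assembling the three rows yields the claimed limits.

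The only genuinely delicate point is the constant $1$ in the limit of $\tb^1 r_\beta(\bf1, \bf3)$: this needs both the \emph{sharp} capacity prefactor $\kappa_1^{-1}$ of Proposition~\ref{prop:cap} and, crucially, that the excursion leaving the valley of $\bf1$ reaches $\bf3$ with probability $1-o_\beta(1)$ rather than merely with probability bounded away from $0$ — the latter being exactly the content of Proposition~\ref{prop:pot_zero}, itself the payoff of the pathwise analysis of the typical tube; without it the limit would be an unidentified constant in $(0,1]$. Everything else is soft, relying only on the exponential separation of communication heights recorded in Theorem~\ref{thm:gamma} together with the order-of-magnitude estimate of Lemma~\ref{lem:cap_est}. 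The one other point requiring care is the capacity representation of the trace-process rates used in the first step, but this is a standard computation (see \cite[Section~2.1]{Beltr_n_2010}) once the escape rate is written via capacity.
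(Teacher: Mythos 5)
Your proof is correct and follows essentially the same route as the paper: both express the trace rates through capacities (the Beltr\'an--Landim identities), combine the sharp estimate of Proposition \ref{prop:cap} with Proposition \ref{prop:pot_zero} to obtain the limit $1$ for $(\bf1,\bf3)$, and kill the remaining rates by comparing $\Gamma(\bf i,\mc M\setminus\{\bf i\})$ with $\Gamma^\star_{13}$ via Lemma \ref{lem:cap_est} and Theorem \ref{thm:gamma}. The only cosmetic differences are that you bound the rows $\bf2$ and $\bf3$ through the total escape rate (the paper uses per-pair capacity formulas and does not spell out the row $\bf3$), and you keep the $\mu_\beta(\bf i)$ normalization of the capacity estimates explicit.
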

\begin{proof}
    From \cite[Lemma 6.8]{Beltr_n_2010}, we have
    \begin{align*}
        \mu_\beta(\bf1)r_\beta(\bf1, \bf2) = \frac{1}{2}[\capa (\bf1, \bf2 \cup \bf3) + \capa(\bf2, \bf1 \cup \bf3) - \capa(\bf3, \bf1 \cup \bf2 )] \\
        \mu_\beta(\bf1)r_\beta(\bf1, \bf3) = \frac{1}{2}[\capa (\bf1, \bf2 \cup \bf3) + \capa(\bf3, \bf1 \cup \bf2) - \capa(\bf2, \bf1 \cup \bf3 )]
    \end{align*}
    and
    \[
        \frac{r_\beta(\bf1, \bf2)}{r_\beta(\bf1, \bf3)} = \frac{\PP_{\bf1}^\beta[\tau_{\bf2} < \tau_{\bf3}]}{\PP_{\bf1}^\beta[\tau_{\bf3} < \tau_{\bf2}]}.
    \]
    This gives
    \begin{align*}
        e^{\beta\Gamma^\star_{13}} r_\beta(\bf1, \bf3) & = e^{\beta\Gamma^\star_{13}}\PP_{\bf1}^\beta[\tau_{\bf3} < \tau_{\bf2}](r_\beta(\bf1, \bf2) + r_\beta(\bf1, \bf3)) \\
        & = e^{\beta\Gamma^\star_{13}}\PP_{\bf1}^\beta[\tau_{\bf3} < \tau_{\bf2}] \frac{\capa(\bf1, \bf2 \cup \bf3)}{\mu_\beta(\bf1)}
    \end{align*}
    Together with Propositions \ref{prop:EKcap} and \ref{prop:pot_zero}, this converges to
    \[
        \kappa_1\PP_{\bf1}^\beta[\tau_3 < \tau_2] (1 + o_\beta(1)) \to \kappa_1
    \]
    as $\beta \to \infty$. Similarly, as $\beta \to \infty$,
    \begin{align*}
        e^{\beta\Gamma^\star_{13}}r_\beta(\bf1, \bf2) 
        & = e^{\beta\Gamma^\star_{13}}\frac{1}{\mu(\bf1)}\PP_{\bf1}^\beta[\tau_{\bf2} < \tau_{\bf3}] \capa(\bf1, \bf2 \cup \bf3) \\
        & = \kappa_1 \PP_{\bf1}^\beta[\tau_{\bf3} < \tau_{\bf2}](1 + o_\beta(1)) \to 0
    \end{align*}

    On the other hand, \cite[Lemma 6.7]{Beltr_n_2010} gives that as $\beta \to \infty$,
    \begin{align*}
        e^{\beta\Gamma^\star_{13}}r_\beta(\bf2, \bf3) & = e^{\beta\Gamma^\star_{13}}\frac{\capa(\bf2, \bf3)}{\mu(\bf2)} = \kappa_2 e^{\beta(\Gamma^\star_{13} - \Gamma^\star_{23})}(1 + o_\beta(1)) \to 0, \text{ and }
    \end{align*}
    \begin{align*}
        e^{\beta\Gamma^\star_{13}}r_\beta(\bf2, \bf1) & = e^{\beta\Gamma^\star_{13}}\frac{\capa(\bf1, \bf2)}{\mu(\bf2)} \le C e^{\beta(\Gamma^\star_{13}+\Phi(\bf1, \bf2) - H(\bf2))} \to 0.
    \end{align*}
\end{proof}

\begin{proposition}[Theorem \ref{thm:MCMR2}: Convergence of the rate]
Consider a trace process on $\{\bf2,\bf3\}$. Let $r_\beta(\bf i, \bf j)$ be the transition rate from $\bf i$ to $\bf j$ of the trace process. Then the scaled jump rate $\theta_\beta^2 r_\beta(\bf i, \bf j)$ converges to
\[
    \limb \theta_\beta^2 r_\beta(\bf2, \bf3) = 1, \quad \limb \theta_\beta^1 r_\beta(\bf3, \bf2) = 0.
\]
\end{proposition}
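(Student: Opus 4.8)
The plan is to follow the argument of the preceding proposition, which here simplifies considerably: because the trace is taken on the two-element set $\{\bf2, \bf3\}$, the metastable state $\bf1$ is simply not part of the reduced chain, so no analogue of the valley-avoidance input Proposition \ref{prop:pot_zero} is needed. For a two-point trace the jump rates are expressed directly through a capacity: by \cite[Lemma 6.7]{Beltr_n_2010}, together with the invariance of capacity under passing to the trace and the symmetry $\capa(\bf2, \bf3) = \capa(\bf3, \bf2)$,
\[
    r_\beta(\bf2, \bf3) = \frac{\capa(\bf2, \bf3)}{\mu_\beta(\bf2)}, \qquad r_\beta(\bf3, \bf2) = \frac{\capa(\bf2, \bf3)}{\mu_\beta(\bf3)},
\]
where $\capa$ and $\mu_\beta$ are those of the original chain. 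Equivalently, reversibility of the trace chain gives $r_\beta(\bf3, \bf2) = \frac{\mu_\beta(\bf2)}{\mu_\beta(\bf3)}\, r_\beta(\bf2, \bf3) = e^{-\beta(H(\bf2) - H(\bf3))} r_\beta(\bf2, \bf3)$.

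First I would identify $\tb^2 r_\beta(\bf2, \bf3)$. Applying Proposition \ref{prop:EKcap} with $\eta = \bf2$, $\eta' = \bf3$ and invoking the domination estimate established inside the proof of Theorem \ref{thm:EK} (that $\sum_{\sigma}\mu_\beta(\sigma)\PP_\sigma^\beta[\tau_{\bf2} < \tau_{\bf3}] = \mu_\beta(\bf2)(1 + o_\beta(1))$) gives $\mu_\beta(\bf2)/\capa(\bf2, \bf3) = \EE_{\bf2}^\beta[\tau_{\bf3}](1 + o_\beta(1))$, whence
\[
    \tb^2\, r_\beta(\bf2, \bf3) = \frac{\tb^2}{\EE_{\bf2}^\beta[\tau_{\bf3}]}(1 + o_\beta(1)) \longrightarrow 1
\]
since $\EE_{\bf2}^\beta[\tau_{\bf3}] = \tb^2(1 + o_\beta(1))$ by Theorem \ref{thm:EK}. (One may instead quote Proposition \ref{prop:cap} for $\capa(\bf2, \bf3)$ together with $\mu_\beta(\bf2) = e^{-\beta(H(\bf2) - H(\bf3))}(1 + o_\beta(1))$; the essential point is that the normalisation $\kappa_2$ built into $\tb^2$ is precisely the prefactor generated by the minimal gate $\mc G_{23}$, so the limit is exactly $1$ and not merely a positive constant.) For the reverse rate, the detailed-balance identity above and the bound just obtained yield $\tb^2 r_\beta(\bf3, \bf2) = e^{-\beta(H(\bf2) - H(\bf3))}(1 + o_\beta(1)) \to 0$, since $H(\bf2) - H(\bf3) = \abs{V}(h_3 - h_2) > 0$; and if the statement is read literally with $\tb^1$ in place of $\tb^2$ there, the additional factor $\tb^1/\tb^2 = (\kappa_1/\kappa_2)e^{-\beta(\Gamma_{23}^\star - \Gamma_{13}^\star)}$ also tends to $0$ because $\Gamma_{13}^\star < \Gamma_{23}^\star$ ($f$ being strictly decreasing and $h_3 - h_1 > h_3 - h_2$), so the conclusion holds in either reading.

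I do not expect a genuine obstacle here: the one point needing care is the matching of constants so that $\tb^2 r_\beta(\bf2, \bf3)$ converges to exactly $1$, and this is why I route the estimate through $\EE_{\bf2}^\beta[\tau_{\bf3}]$ and Theorem \ref{thm:EK} — whose prefactor $\kappa_2$ is, by construction, the reciprocal of $\tb^2 e^{-\beta\Gamma_{23}^\star}$ — rather than through a bare capacity asymptotic. The fact that $\bf1$ is invisible on this time scale is automatic from the choice of trace set $\{\bf2, \bf3\}$ and requires none of the refined energy-landscape analysis that entered Theorem \ref{thm:MCMR1}.
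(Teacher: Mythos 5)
Your proposal is correct and follows essentially the same route as the paper: both rest on \cite[Lemma 6.7]{Beltr_n_2010}, which for the two-state trace gives $r_\beta(\bf2,\bf3)=\capa(\bf2,\bf3)/\mu_\beta(\bf2)$ and $r_\beta(\bf3,\bf2)=\capa(\bf2,\bf3)/\mu_\beta(\bf3)$, after which the asymptotics of capacity and Gibbs weight finish the job. Routing the constant through $\EE_{\bf2}^\beta[\tau_{\bf3}]$ and Theorem~\ref{thm:EK} instead of quoting Proposition~\ref{prop:cap} directly is a cosmetic repackaging of the same estimate, not a different method; and you are right that $\theta_\beta^1$ in the reverse-rate formula is plainly a typo for $\theta_\beta^2$, with the limit being $0$ under either reading since $\Gamma_{13}^\star<\Gamma_{23}^\star$ and $H(\bf2)>H(\bf3)$.
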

\begin{proof}
    It follows from \cite[Lemma 6.7]{Beltr_n_2010} that as $\beta \to \infty$, we have 
    \begin{align*}
        e^{\beta\Gamma^\star_{23}}r_\beta(\bf2, \bf3) = e^{\beta\Gamma^\star_{23}}\frac{\capa(\bf2, \bf3)}{\mu_\beta(\bf2)} \to \kappa_2 \text{ and } e^{\beta\Gamma^\star_{23}}r_\beta(\bf3, \bf2) = e^{\beta\Gamma^\star_{23}}\frac{\capa(\bf3, \bf2)}{\mu_\beta(\bf3)} \to 0.
    \end{align*}
\end{proof}

To finish the proof of Theorems \ref{thm:MCMR1} and \ref{thm:MCMR2}, it remains to show the negligibility.
We propose a general form of the statement.
\begin{proposition}
    \label{prop:neg}
    Suppose $\mc M \subseteq \XX$ consists of states with stability levels strictly greater than states not in $\mc M$, that is,
    \[
        \mc V_\sigma < \mc V_i \quad \text{ for all } i \in \mc M, \, \sigma \notin \mc M.
    \]
    Let $\mc V = \min_{i \in \mc M} \mc V_i$ and $\tb = e^{\beta \mc V}$. Denote $\Delta = \XX \backslash \mc M$. Then we have
    \[
        \limb \frac{1}{\tb}\EE_\xi^\beta \left[\int_0^{t\tb} \one_{\{X_\beta(s)\in \Delta\}}ds \right] = 0
    \]
    for every $\xi \in \XX$ and $t > 0$. In other words, the chain speeded up by $\tb$ spends a negligible amount of time outside of $\mc M$.
\end{proposition}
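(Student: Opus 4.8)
The plan is to control the occupation time of each single state $\sigma\in\Delta$ separately and then sum over $\sigma$. Write $\lambda_\sigma=\sum_{\eta\sim\sigma}c_\beta(\sigma,\eta)$ for the escape rate from $\sigma$ and $m_\sigma=\EE_\sigma^\beta[\tau_\sigma^+]=(\lambda_\sigma\mu_\beta(\sigma))^{-1}$ for the mean return time to $\sigma$. Three preliminary facts are needed, all consequences of the hypothesis. First, $\delta:=\mc V-\max_{\sigma\in\Delta}\mc V_\sigma>0$; here we may assume $\Delta\neq\emptyset$ and $\mc V<\infty$, the remaining cases being trivial. Second, no $\sigma\in\Delta$ attains $\min_\eta H(\eta)$, for otherwise $I_\sigma=\emptyset$, so $\mc V_\sigma=\infty$, contradicting $\mc V_\sigma<\mc V_i<\infty$ for $i\in\mc M$; hence $\mu_\beta(\Delta)\le|\XX|e^{-\beta c}\to0$ with $c=\min_{\sigma\in\Delta}\bigl(H(\sigma)-\min_\eta H(\eta)\bigr)>0$. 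Third, $\lambda_\sigma\ge e^{-\beta\mc V_\sigma}\ge e^{-\beta(\mc V-\delta)}$ for $\sigma\in\Delta$: indeed $\lambda_\sigma\ge c_\beta(\sigma,\eta^\ast)=e^{-\beta(H(\eta^\ast)-H(\sigma))_+}$ for the lowest-energy neighbour $\eta^\ast$ of $\sigma$, and since any path from $\sigma$ to $I_\sigma$ first moves to a neighbour, $(H(\eta^\ast)-H(\sigma))_+\le\mc V_\sigma$.

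Using the strong Markov property at $\tau_\sigma$ and the fact that each visit to $\sigma$ lasts an exponential time of rate $\lambda_\sigma$, for every $\xi\in\XX$ and $T:=t\tb$ one has
\[
    \EE_\xi^\beta\!\left[\int_0^{T}\one_{\{X_\beta(s)\in\Delta\}}\,ds\right]=\sum_{\sigma\in\Delta}\frac1{\lambda_\sigma}\,\EE_\xi^\beta\bigl[\,\#\{\text{visits to }\sigma\text{ in }[0,T]\}\,\bigr]\le\sum_{\sigma\in\Delta}\frac1{\lambda_\sigma}\bigl(1+R_\sigma(T)\bigr),
\]
where $R_\sigma(T)=\EE_\sigma^\beta[\,\#\{\text{returns to }\sigma\text{ in }(0,T]\}\,]$ is the renewal function of the return process to $\sigma$. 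The contribution of the ``$1$'' is already handled: $\tfrac1{\tb}\sum_{\sigma\in\Delta}\lambda_\sigma^{-1}\le|\XX|\,e^{-\beta\delta}\to0$.

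For the renewal term I would use the universal bound $R_\sigma(T)\le 2T/\EE_\sigma^\beta[\min(\tau_\sigma^+,T)]$ --- obtained by truncating the inter-return times at $T$ and applying Wald's identity to the resulting counting process, whose final overshoot of $T$ is at most $T$ --- and then distinguish two regimes. If $m_\sigma<T$ (so $\sigma$ is visited frequently), the exponential-type tail of return times of a finite reversible chain gives $\EE_\sigma^\beta[\min(\tau_\sigma^+,T)]\ge c'm_\sigma$ for an absolute $c'>0$, hence $\lambda_\sigma^{-1}R_\sigma(T)\le 2(c')^{-1}T\mu_\beta(\sigma)$, and summing over such $\sigma$ gives $O(T\mu_\beta(\Delta))=O(t\tb\,\mu_\beta(\Delta))=o(\tb)$. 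If $m_\sigma\ge T$ (so $\sigma$ is visited rarely), then in each excursion away from $\sigma$ the chain descends into $I_\sigma$ with probability at least $q_\sigma:=\PP_\sigma^\beta[\tau_{I_\sigma}<\tau_\sigma^+]$, after which re-entering $\{\sigma\}$ before time $T$ has small probability (the rate of return from $I_\sigma$ to $\sigma$ being of order $m_\sigma^{-1}\le T^{-1}$); this yields $R_\sigma(T)\lesssim q_\sigma^{-1}$. By the identity $\capa(\sigma,I_\sigma)=\mu_\beta(\sigma)\lambda_\sigma q_\sigma$ and the estimate $\capa(\sigma,I_\sigma)\ge C_1 e^{-\beta(H(\sigma)+\mc V_\sigma)}/Z_\beta$ from Lemma \ref{lem:cap_est}, we get $q_\sigma\ge C_1 e^{-\beta\mc V_\sigma}/\lambda_\sigma$, so $\lambda_\sigma^{-1}R_\sigma(T)\lesssim C_1^{-1}e^{\beta\mc V_\sigma}\le C_1^{-1}e^{\beta(\mc V-\delta)}$; summing over such $\sigma$ gives $O(|\XX|e^{\beta(\mc V-\delta)})=o(\tb)$. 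Adding the three contributions proves the proposition.

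The main obstacle is the estimate for $R_\sigma(T)$ in the rarely-visited regime $m_\sigma\ge T$: one must show that after the chain leaves the level set of $\sigma$ downwards it re-enters $\{\sigma\}$ within the macroscopic window $[0,t\tb]$ only with probability bounded away from $1$. This requires controlling the \emph{tail} of $\tau_\sigma^+$ --- not merely its mean --- and comparing the capacity-type return rate $m_\sigma^{-1}=\lambda_\sigma\mu_\beta(\sigma)$ with $\tb^{-1}=e^{-\beta\mc V}$; it is here that the hypothesis $\mc V_\sigma<\mc V_i$ is used essentially, via the lower bound on $q_\sigma$ furnished by Lemma \ref{lem:cap_est}. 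The dichotomy on $m_\sigma$ versus $T$ is precisely what allows the two competing bounds $\lambda_\sigma^{-1}\le e^{\beta(\mc V-\delta)}$ and $R_\sigma(T)\lesssim T/m_\sigma$ to be played off against each other so that, in both regimes, the per-state occupation time divided by $\tb$ tends to $0$.
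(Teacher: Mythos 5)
Your approach is genuinely different from the paper's. The paper proves the claim first for $\xi\in\mc M$ by induction in increasing order of $H(\xi)$ --- using confinement in cycles when $\mc V_\xi>\mc V$, and a potential--theoretic identity (Proposition \ref{prop:EKgen}) together with the capacity bounds of Lemma \ref{lem:cap_est} when $\mc V_\xi=\mc V$ --- and then extends to $\xi\notin\mc M$ using the strong Markov property at $\tau_{\mc M}$. You instead try to bound the expected occupation time of each $\sigma\in\Delta$ separately via renewal theory, which is an appealing and more self-contained-sounding strategy. Unfortunately, the two key renewal estimates you invoke are not established and in fact can fail.

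The first gap is the claim that in the regime $m_\sigma<T$ one has $\EE_\sigma^\beta[\min(\tau_\sigma^+,T)]\ge c'm_\sigma$ with an absolute constant $c'>0$, attributed to an ``exponential-type tail'' of return times. For reversible chains there is no such universal lower bound, and in metastable landscapes the return time to a state $\sigma$ can be very heavy-tailed: if $\sigma$ communicates easily with a much deeper valley, most returns are fast (contributing almost nothing to $m_\sigma$), while a rare excursion into the deep valley lasts far longer than $T$. Concretely, consider a reversible three-state chain on $\{a,b,c\}$ with rates $c(a,b)=c(b,a)=1$, $c(b,c)=\eps$, $c(c,b)=\eps^2$; then $m_a\approx1/\eps$, but for $T$ of order $1/\eps$ one finds $\EE_a[\min(\tau_a^+,T)]=O(1)$, so $\EE_a[\min(\tau_a^+,T)]/m_a\to0$ as $\eps\to0$. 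Since states of exactly this kind --- shallow states adjacent to deep wells --- are precisely what Proposition \ref{prop:neg} must handle, this is not a peripheral issue.

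The second gap is the claim $R_\sigma(T)\lesssim q_\sigma^{-1}$ when $m_\sigma\ge T$. Your heuristic is that once the chain reaches $I_\sigma$ it will not return to $\sigma$ within the macroscopic window, because ``the rate of return from $I_\sigma$ to $\sigma$ is of order $m_\sigma^{-1}$.'' But $I_\sigma=\{\eta:H(\eta)<H(\sigma)\}$ can contain states in $\Delta$ adjacent to $\sigma$ from which the chain has order-one probability per step of bouncing straight back. Reaching $I_\sigma$ is therefore not the same as exiting the vicinity of $\sigma$; the inter-return time conditioned on $\{\tau_{I_\sigma}<\tau_\sigma^+\}$ need not be of order $m_\sigma$, and so the geometric-trials bound on the visit count does not follow. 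This difficulty is precisely why the paper does not attempt to control renewal functions directly, and instead uses the identity of Proposition \ref{prop:EKgen}: writing $\tfrac1{\tb}\EE_\xi\!\bigl[\int_0^{\tau_{\mc M'}}\one_{\Delta}\bigr]=\tfrac1{\tb\capa(\xi,\mc M')}\sum_{\sigma\in\Delta}\PP_\sigma[\tau_\xi<\tau_{\mc M'}]\mu_\beta(\sigma)$ and then controlling the right-hand side via Lemmas \ref{lem:pot_est}--\ref{lem:cap_est} correctly accounts for the interplay between a shallow state $\sigma$, the deep state $\xi$, and the target set, which the per-state renewal bound cannot see. Your preliminary facts and the treatment of the first term $\sum_\sigma\lambda_\sigma^{-1}$ are fine, but the proposal as written does not close.
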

\begin{proof}
    \,
    We first prove the statement for $\xi$ in $\mc M$, then extend the result to $\xi$ that are not in $\mc M$. This will be based on the observation that the state not in $\mc M$ falls to $\mc M$ in a negligible time.

    \vspace{5pt}
    \noindent
    \emph{Step 1.} The statement holds for $\xi \in \mc M$.

    We prove the statement inductively, in an energy-increasing order. First, let $\xi \in \mc M$ be a state with minimal energy. Let $\PP_{\mu_\beta}$ denote the law of the Metropolis dynamics starting from the initial distribution $\mu_\beta$. Then, for any time $s > 0$, we have
    \[
        \PP_\xi^\beta[X_\beta(s) \in \Delta] \le \frac{1}{\mu_\beta(\xi)}\PP_{\mu_\beta}[X_\beta(s) \in \Delta] = \frac{\mu_\beta(\Delta)}{\mu_\beta(\xi)}.
    \]
    By Fubini's theorem, we have
    \begin{align*}
        \frac{1}{\tb}\EE_\xi^\beta \left[\int_0^{t\tb} \one_{\{X_\beta(s)\in \Delta\}}ds \right] = \frac{1}{\tb}\int_0^{t\tb} \PP_\xi^\beta \left[X_\beta(s) \in \Delta\right]
        \le t \cdot \frac{\mu_\beta(\Delta)}{\mu_\beta(\xi)} = o_\beta(1).
    \end{align*}
    Hence, the statement holds for the state with minimal energy.

    Next, we take $\xi \in \mc M$, and define $\mc M' = \{\sigma \in \mc M :\, H(\sigma) < H(\xi)\}$. By the induction hypothesis, the statement holds for states in $\mc M'$. Since $\mc V_\xi \ge \mc V$ by assumption, we divide the case into two.

    \vspace{5pt}
    \noindent
    \textbf{(Case 1: $\mc V_\xi > \mc V$)} Let $C$ be the cycle of depth $\mc V_\xi$ containing $\xi$ as a bottom. By \cite[Theorem 3.2]{Nardi_2015}, $\PP_\xi[\tau_{\partial C} < t\tb] = o_\beta(1)$. So we may assume that the dynamics take place in $C$ on time $[0, t\tb]$. By the same argument as the previous case, Fubini's theorem gives
    \begin{align*}
        \frac{1}{\tb}\EE_\xi^\beta \left[\int_0^{t\tb} \one_{\{X_\beta(s)\in \Delta\}}ds ; \tau_{\partial C} > t \tb\right] & = \frac{1}{\tb}\int_0^{t\tb} \PP_\xi^\beta \left[X_\beta(s) \in \Delta, \, \tau_{\partial C} > t \tb \right] \\
        & \le t \cdot \frac{\mu_\beta(\Delta \cap C)}{\mu_\beta(\xi)} = o_\beta(1)
    \end{align*}

    \vspace{5pt}
    \noindent
    \textbf{(Case 2: $\mc V_\xi = \mc V$)} Putting $g = \one_\Delta$ in Proposition \ref{prop:EKgen}, we have
    \[
        \frac{1}{\tb}\EE_\xi^\beta\left[\int_0^{\tau_{\mc M'}} \one_{\{X_\beta(s) \in \Delta\}} ds\right] = \frac{1}{\tb\capa(\xi, \mc M')}\sum_{\sigma \in \Delta} \PP_\sigma^\beta[\tau_\xi < \tau_{\mc M'}] \mu_\beta(\sigma).
    \]
    Define $\mc V' = \max_{\sigma \notin M} \mc V_\sigma$, which is less than $\mc V$ by assumption. It is clear that $H(\xi) + \mc V_\xi \le \Phi(\xi, \mc M')$. Moreover, retaining the energy lower than $H(\xi) + \mc V_\xi$, we may move from $\xi$ to a state with lower energy. Repeat this until the process hits $\mc M'$. Every state visited must have a stability level at most $\mc V' < \mc V_\xi$, so we have that $\Phi(\xi, \mc M') \le H(\xi) + \mc V_\xi$. Hence, $\Phi(\xi, \mc M') = H(\xi) + \mc V_\xi$ and equivalently, $\Gamma(\xi, \mc M') = \mc V_\xi$. Together with Lemma \ref{lem:cap_est}, we have
    \[
        \capa(\xi, \mc M') \ge C e^{-\beta\Phi(\xi, \mc M')}Z_\beta^{-1} = C e^{-\beta(\mc V_\xi + H(\xi))}Z_\beta^{-1} = C e^{-\beta \mc V_\xi}\mu_\beta(\xi)
    \]
    for some $C > 0$. This gives 
    \[
        \frac{1}{\tb}\EE_\xi^\beta\left[\int_0^{\tau_{\mc M'}} \one_{\{X_\beta(s) \in \Delta\}} ds\right] \le C^{-1}\sum_{\sigma \in \Delta} \PP_\sigma^\beta[\tau_\xi < \tau_{\mc M'}]\frac{\mu_\beta(\sigma)}{\mu_\beta(\xi)}.
    \]
    We divide the sum over $\sigma \in \Delta$ into two parts and show that each term vanishes as $\beta \to 0$.
    \begin{enumerate}[leftmargin=*]
        \item $H(\sigma) > H(\xi)$: Bounding the probability by 1, the term exponentially vanishes.
        \item $H(\sigma) \le H(\xi)$: Starting from $\sigma$, we may reach a state with lower energy without exceeding $H(\sigma) + \mc V'$. Repeating this until we reach $\mc M'$, we have that $\Gamma(\sigma, \mc M') \le \mc V'$. Also, observe that $\Gamma(\xi, \mc M') \le \max(\Gamma(\xi, \sigma), \Gamma(\sigma, \mc M'))$ and
        \[
            \Gamma(\sigma, \mc M') \le \mc V' < \mc V_\xi = \Gamma(\xi, \mc M').
        \]
        Hence $\Gamma(\xi, \mc M') \le \Gamma(\xi, \sigma)$ and $\Gamma(\sigma, \mc M') < \Gamma(\xi, \sigma)$.
        By Lemma \ref{lem:pot_est} on $\PP_\sigma^\beta[\tau_\xi < \tau_{\mc M'}]$,
        \begin{align*}
            \PP_\sigma[\tau_\xi < \tau_{\mc M}]\frac{\mu_\beta(\sigma)}{\mu_\beta(\xi)} \le \frac{\capa(\sigma, \xi)e^{\beta H(\xi)}}{\capa(\sigma, \mc M')e^{\beta H(\sigma)}},
        \end{align*}
        and by Lemma \ref{lem:cap_est} on both capacities, the above term is bounded by
        \[
            C e^{-\beta(\Phi(\xi, \sigma) - H(\xi))+\beta(\Phi(\sigma, \mc M')-H(\sigma))} 
            = C e^{-\beta(\Gamma(\xi, \sigma) - \Gamma(\sigma, \mc M'))} = o_\beta(1).
        \]
        for some constant $C > 0$.
    \end{enumerate}
    Gathering two bounds, we obtain
    \[
        \frac{1}{\tb}\EE_\xi^\beta\left[\int_0^{\tau_{\mc M'}} \one_{\{X_\beta(s) \in \Delta\}} ds\right] = o_\beta(1).
    \]
    Finally, we bound the integral over $[0, t\tb]$. In the case of $\tau_{\mc M'} > t\theta_\beta$, the previous result gives
    \[
        \frac{1}{\tb}\EE_\xi^\beta\left[\int_0^{t\tb} \one_{\{X_\beta(s) \in \Delta\}} ds; \, \tau_{\mc M'} > t\tb \right] \le \frac{1}{\tb}\EE_\xi^\beta\left[\int_0^{\tau_{\mc M'}} \one_{\{X_\beta(s) \in \Delta\}} ds\right] = o_\beta(1).
    \]
    In the case of $\tau_{\mc M'} > t \tb$, we split the integral over $[0, \tau_{\mc M'}]$ and $[\tau_{\mc M'}, \tb]$. The former integral is again $o_\beta(1)$ by the previous result. The latter integral can be written in
    \begin{align*}
        \frac{1}{\tb}\EE_\xi^\beta\left[\int_{\tau_{\mc M'}}^{t\tb} \one_{\{X_\beta(s) \in \Delta\}} ds; \, \tau_{\mc M'} < \tb \right] \le \sum_{\sigma \in \mc M'} \frac{1}{\tb} \PP_{\xi}^\beta[\tau_{\mc M'} = \tau_\sigma ] \EE_\sigma^\beta \left[\int_0^{t\tb} \one_{\{X_\beta(s) \in \Delta\}} ds \right].
    \end{align*}
    by strong Markov property on $\tau_{\mc M'}$. Applying the induction hypothesis to each $\sigma \in \mc M'$, the summation is bounded by $o_\beta(1)$. Summing up the results, we obtain
    \[
        \frac{1}{t\tb}\EE_\xi^\beta\left[\int_0^{\tau_{\mc M'}} \one_{\{X_\beta(s) \in \Delta\}} ds\right] = o_\beta(1),
    \]
    completing the proof of the statement for $\xi \in \mc M$.

    \vspace{5pt}
    \noindent
    \emph{Step 2}. The statement holds for $\xi \notin \mc M$.
    
    To see this, consider a maximal cycle consisting of $\XX \backslash \mc M$. Since the stability level is bounded by $\mc V'$, the depths of cycles are bounded by $\mc V'$. By \cite[Proposition 3.7]{Nardi_2015}, for sufficiently small $\ee > 0$
    \[
        \PP_\xi^\beta\left[\tau_{\mc M} > t\tb\right] < \PP_\xi^\beta\left[\tau_{\mc M} > e^{\beta(\mc V' + \ee)} \right] = o_\beta(1).
    \]
    We split the integral into two parts according to $\tau_{\mc M}$ and $t\tb$. The first part becomes
    \[
        \frac{1}{\tb}\EE_\xi^\beta \left[\int_0^{t\tb} \one_{\{X_\beta(s)\in \Delta\}}ds;\, \tau_{\mc M} > t\tb \right] \le t \PP_\xi^\beta[\tau_{\mc M} > t\tb] = o_\beta(1).
    \]
    For the second part, we use the strong Markov property on $\tau_{\mc M}$. Then
    \[
        \frac{1}{\tb}\EE_\xi^\beta \left[\int_0^{t\tb} \one_{\{X_\beta(s)\in \Delta\}}ds;\, \tau_{\mc M} < t\tb \right] \le \frac{1}{\tb}\sum_{m \in \mc M} \PP_\xi^\beta[\tau_{\mc M} = \tau_m] \EE_m^\beta \left[\int_0^{t\tb} \one_{\{X_\beta(s)\in \Delta\}}ds \right].
    \]
    The desired result follows from the case where $\xi \in \mc M$.
\end{proof}

\subsection{Computation of transition time}
\label{subsec:4.4}

Recall from Proposition \ref{prop:EKcap} the equality
\begin{equation}\label{eq:EK13}
    \EE_1^\beta[\tau_3] = \frac{1}{\capa(\bf1, \bf3)} \sum_{\sigma \in \XX} \mu_\beta(\sigma) \PP_\sigma^\beta[\tau_1 < \tau_3].
\end{equation}
First, we show that states other than \textbf{1}, \textbf{2}, and \textbf{3} do not contribute to the summation. The general form of this statement is given in  \cite[Lemma 3.3]{Landim_2016}, which includes our case as shown in the next lemma.
\begin{lemma}
    \label{lem:ekdom}
    Suppose $\mc M \subseteq \XX$ consists of states with stability levels strictly greater than states not in $\mc M$, which is
    \[
       \mc V_\sigma < \mc V_\eta \quad \text{ for all } \eta \in \mc M, \, \sigma \notin \mc M.
    \]
    Then, for any $B \subset \mc M$ and $\xi \in \mc M \backslash B$,
    \[
        \EE_\xi^\beta[\tau_B] = (1 + o_\beta(1))\frac{1}{\capa(\xi, B)} \sum_{\sigma \in \mc M} \mu(\sigma) \PP_\sigma^\beta[\tau_\xi < \tau_B].
    \]
\end{lemma}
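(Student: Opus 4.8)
The plan is to pass to the potential‑theoretic formula and then show that the sum is carried entirely by $\mc M$. First I would generalise Proposition~\ref{prop:EKcap} to a target \emph{set}: taking $A=\{\xi\}$ and $g\equiv 1$ in Proposition~\ref{prop:EKgen}, and noting that the equilibrium measure on the singleton $\{\xi\}$ is $\delta_\xi$, one obtains
\[
    \EE_\xi^\beta[\tau_B]=\frac{1}{\capa(\xi,B)}\sum_{\sigma\in\XX}\mu_\beta(\sigma)\,\PP_\sigma^\beta[\tau_\xi<\tau_B].
\]
Writing $\Delta=\XX\setminus\mc M$ and using that the $\mc M$‑sum dominates its $\sigma=\xi$ term $\mu_\beta(\xi)$, the lemma reduces to proving
\[
    \sum_{\sigma\in\Delta}\mu_\beta(\sigma)\,\PP_\sigma^\beta[\tau_\xi<\tau_B]=o_\beta(1)\sum_{\sigma\in\mc M}\mu_\beta(\sigma)\,\PP_\sigma^\beta[\tau_\xi<\tau_B].
\]

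Set $\mc V'=\max_{\sigma\in\Delta}\mc V_\sigma$, so that $\mc V'<\mc V:=\min_{i\in\mc M}\mc V_i\le\mc V_\xi$ by hypothesis. I would bound each term $\mu_\beta(\sigma)\PP_\sigma^\beta[\tau_\xi<\tau_B]$, $\sigma\in\Delta$, following the pattern of the proof of Theorem~\ref{thm:EK}. If $H(\sigma)>H(\xi)$, then $\mu_\beta(\sigma)=e^{-\beta(H(\sigma)-H(\xi))}\mu_\beta(\xi)=o_\beta(1)\mu_\beta(\xi)$ and $\PP_\sigma^\beta[\tau_\xi<\tau_B]\le1$, so these terms are negligible. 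If $H(\sigma)\le H(\xi)$ — say $H(\sigma)<H(\xi)$, the borderline case being similar — the key structural input is the downhill‑cascade argument already used in Theorem~\ref{thm:EK} and Proposition~\ref{prop:neg}: since every state of $\Delta$ has stability level at most $\mc V'$, from $\sigma$ one can reach strictly lower states without ever exceeding height $H(\sigma)+\mc V'$, and iterating this necessarily lands in $\mc M$; hence $\Gamma(\sigma,\mc M)\le\mc V'$ and there is a well $\eta(\sigma)\in\mc M$ with $\Phi(\sigma,\eta(\sigma))\le H(\sigma)+\mc V'$. Also, since $\sigma\in I_\xi$, $\Phi(\sigma,\xi)\ge H(\xi)+\mc V_\xi$. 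Combining the renewal estimate (Lemma~\ref{lem:pot_est}) with the capacity bounds (Lemma~\ref{lem:cap_est}) gives $\mu_\beta(\sigma)\PP_\sigma^\beta[\tau_\xi<\tau_B]\le C\mu_\beta(\sigma)e^{-\beta(\Phi(\sigma,\xi)-\Phi(\sigma,B))}$. If $\eta(\sigma)\in B$ then $\Phi(\sigma,B)\le H(\sigma)+\mc V'$, so the exponent is at least $\mc V_\xi-\mc V'$ and the term is $o_\beta(1)\mu_\beta(\xi)$; if $\eta(\sigma)\in\mc M\setminus B$, I would instead compare with the term $\mu_\beta(\eta(\sigma))\PP_{\eta(\sigma)}^\beta[\tau_\xi<\tau_B]$ of the $\mc M$‑sum, the exponential gap between the two being governed by $\mc V_{\eta(\sigma)}-\mc V_\sigma\ge\mc V-\mc V'>0$. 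Summing over the finitely many $\sigma\in\Delta$ finishes the proof.

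An equivalent route, useful as a cross‑check, avoids handling individual $\sigma$'s: applying Proposition~\ref{prop:EKgen} with $g=\one_\Delta$ shows $\sum_{\sigma\in\Delta}\mu_\beta(\sigma)\PP_\sigma^\beta[\tau_\xi<\tau_B]=\capa(\xi,B)\,\EE_\xi^\beta\!\left[\int_0^{\tau_B}\one_{\{X_s\in\Delta\}}\,ds\right]$, so the lemma is exactly the assertion that $\EE_\xi^\beta[\int_0^{\tau_B}\one_{\{X_s\in\Delta\}}\,ds]=o_\beta(1)\EE_\xi^\beta[\tau_B]$, i.e.\ the process spends asymptotically none of its time outside $\mc M$ before reaching $B$. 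Proposition~\ref{prop:neg} already gives this up to any fixed multiple of $\tb=e^{\beta\mc V}$; the remaining work is to carry it past $\tau_B$ (which may be far larger than $\tb$) by decomposing $[0,\tau_B]$ into the successive excursions between visits to $\mc M$, bounding the expected length of each $\Delta$‑excursion by $O(e^{\beta\mc V'})$ since the cycles inside $\Delta$ have depth at most $\mc V'$, and matching this against the expected sojourn times in the wells of $\mc M$, which are of order at least $e^{\beta\mc V}$; the ratio is then $O(e^{\beta(\mc V'-\mc V)})=o_\beta(1)$.

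The main obstacle is the case $H(\sigma)\le H(\xi)$ with $\eta(\sigma)\in\mc M\setminus B$: here $\mu_\beta(\sigma)$ is \emph{not} smaller than $\mu_\beta(\xi)$, so one cannot simply dominate by the $\sigma=\xi$ term and must instead identify the absorbing well $\eta(\sigma)$ and obtain sharp two‑sided control of the equilibrium potential $h_{\xi,B}$ at both $\sigma$ and $\eta(\sigma)$ — in particular a good lower bound at $\eta(\sigma)$, which is the delicate point. The strict inequality $\mc V_\sigma<\mc V_i$ for $i\in\mc M$ is precisely what opens the exponential gap making everything outside $\mc M$ vanish; this is the content of \cite[Lemma~3.3]{Landim_2016}, whose hypotheses are exactly the ones imposed here, so in practice one may simply verify the hypothesis and invoke that result.
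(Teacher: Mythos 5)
Your proposal follows essentially the paper's route. The paper proves the lemma by invoking \cite[Lemma 3.3]{Landim_2016} and verifying its hypothesis, which is the capacity comparison $\capa(\sigma,\eta)/\mu_\beta(\eta) \ll \capa(\sigma,\mc M)/\mu_\beta(\sigma)$, equivalently $\Gamma(\eta,\sigma) > \Gamma(\sigma,\mc M)$, for $\sigma \notin \mc M$ and $\eta \in \mc M$; that verification uses exactly your two ingredients, namely the downhill cascade giving $\Gamma(\sigma,\mc M) \le \mc V' \coloneqq \max_{\sigma \notin \mc M} \mc V_\sigma < \min_{\eta \in \mc M}\mc V_\eta$, and the stability gap $\Gamma(\eta,\sigma) \ge \mc V_\eta$, together with a short separate argument for the equal-energy case $H(\sigma) = H(\eta)$ which you only wave at. So your closing fallback (``verify the hypothesis and invoke \cite[Lemma 3.3]{Landim_2016}'') is precisely the paper's proof; be aware, however, that the hypothesis of that lemma is the capacity comparison above and not literally the stability-level condition assumed here, so the verification is the actual content of the proof and cannot be skipped.

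One step of your direct term-by-term estimate is misstated, namely the hardest case $H(\sigma)\le H(\xi)$ with landing well $\eta(\sigma)\in\mc M\setminus B$. The equilibrium potentials $\PP_\sigma^\beta[\tau_\xi<\tau_B]$ and $\PP_{\eta(\sigma)}^\beta[\tau_\xi<\tau_B]$ are in general comparable (for $\sigma$ deep in the well of $\eta(\sigma)$ they are nearly equal), so the exponential gain between the two terms is not governed by $\mc V_{\eta(\sigma)}-\mc V_\sigma$. The correct mechanism is to split $\PP_\sigma^\beta[\tau_\xi<\tau_B]\le \PP_\sigma^\beta[\tau_\xi<\tau_{\eta(\sigma)}]+\PP_{\eta(\sigma)}^\beta[\tau_\xi<\tau_B]$ by the strong Markov property: the first piece, multiplied by $\mu_\beta(\sigma)$, is $o_\beta(1)\mu_\beta(\xi)$ by Lemmas \ref{lem:pot_est} and \ref{lem:cap_est}, since $\Phi(\sigma,\xi)\ge H(\xi)+\mc V_\xi$ while $\Phi(\sigma,\eta(\sigma))\le H(\sigma)+\mc V'$; the second piece gains from the strict energy gap $H(\sigma)>H(\eta(\sigma))$ (the cascade descends strictly), i.e.\ $\mu_\beta(\sigma)=o_\beta(1)\mu_\beta(\eta(\sigma))$, so it is $o_\beta(1)$ times the $\mc M$-term at $\eta(\sigma)$. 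With that repair (or by simply carrying out the paper's verification of the capacity hypothesis) your argument is complete; the excursion-decomposition cross-check is a sound heuristic but would require the same estimates to be made rigorous.
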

\begin{proof}
    For a positive functions $f(\beta)$ and $g(\beta)$ on $\beta$, we use the notation $f(\beta) \ll g(\beta)$ if $\limb f(\beta)/g(\beta) = 0$. We define $f(\beta) \gg g(\beta)$ in a similar fashion. By \cite[Lemma 3.3]{Landim_2016}, it suffices to check that for $\sigma \notin \mc M$, $\eta \in \mc M$ such that $H(\sigma) \le H(\eta)$,
    \[
        \frac{\capa(\sigma, \eta)}{\mu(\eta)} \ll \frac{\capa(\sigma, \mc M)}{\mu(\sigma)},
    \]
    or equivalently $\Gamma(\eta, \sigma) > \Gamma(\sigma, \mc M)$. If $H(\eta) < H(\sigma)$, then $\Gamma(\eta, \sigma) \ge \mc V_\eta$ by definition, and $\Gamma(\sigma, \mc M) \le \max_{i \in \XX \backslash \mc M} \mc V_i$ by starting from $\sigma$ and repeatedly moving to a state with lower energy until hitting $\mc M$. Hence, the desired condition holds. If $H(\eta) = H(\sigma)$, then the energy barrier $\Gamma(\eta, \sigma)$ must be greater than $\mc V_\eta$, or unless $\mc V_\sigma = \mc V_\eta$ and this contradicts $\sigma \notin \mc M$ and $\eta \in \mc M$. Hence $\Gamma(\eta, \sigma) > \mc V_\eta \ge \max_{\sigma' \in \XX \backslash \mc M} \mc V_{\sigma'} \ge \mc V_\sigma$ as before. Therefore, the desired condition holds for either case.
\end{proof}

Finally, we prove Theorem \ref{thm:EKinf}. Applying the previous lemma to \eqref{eq:EK13}, we have
\[
    \EE_{\bf1}^\beta[\tau_{\bf3}] = (1 + o_\beta(1))\frac{1}{\capa(\bf1, \bf3)} \left( \mu_\beta(\bf1) + \mu_\beta(\bf2) \PP_{\bf2}^\beta[\tau_{\bf1} < \tau_{\bf3}] \right).
\]
Putting the estimate of $\capa(\bf1, \bf3)$ from Proposition \ref{prop:cap},
\[
    e^{-\beta\Gamma_{13}^\star}\EE_{\bf1}^\beta[\tau_{\bf3}] = (\kappa_1 + o_\beta(1)) \left(1 + e^{\beta(H(\bf1) - H(\bf2))} \PP_{\bf2}^\beta[\tau_{\bf1} < \tau_{\bf3}]\right).
\]
Hence, it suffices to obtain a sufficient lower bound of $\PP_{\bf2}^\beta[\tau_{\bf1} < \tau_{\bf3}]$. This can be done by constructing a cycle-path from \bf2 to \bf1 disjoint to \bf3, then computing the probability of following the path. Readers can find a similar construction of paths in \cite[Lemma 6.1]{Landim_2016}, for example.

\begin{proof}[Proof of Theorem \ref{thm:EKinf}]
    To obtain a lower bound of $\PP_{\bf2}^\beta[\tau_{\bf1} < \tau_{\bf3}]$, we construct a cycle-path from \bf2 to \bf1 disjoint to \bf3. This is done by reversing a reference path of the Ising model of spin 1 and 2. Precisely, we divide the path into the following four parts.
    \begin{itemize}[leftmargin=*]
        \item Let $\gamma_0$ be a path from \bf2 to $\rect{2}{1}{K-1}{L-1}$ obtained by adding 1-spin along one row, then adding 1-spin along one column. We see that the energy strictly increases.
        \item For $1 \le i \le K - L-1$, let $\gamma_i$ be a path from $\rect{2}{1}{K-i}{L-1}$ to $\rect{2}{1}{K-i-1}{L-1}$ obtained by consecutively adding 1-spin along a vertical side of 2-cluster.
        \item For $K-L \le i \le K+L-1$, let $\gamma_i$ be a path adding a line of 1-spin to transform a 2-cluster of size $\ell \times \ell$ into size $\ell \times (\ell - 1)$, or 2-cluster of size $(\ell+1)\times\ell$ into size $\ell \times \ell$, alternatively. We end at $\rect{2}{1}{\ell_c^{12}}{\ell_c^{12}}$. Finally, let $\gamma_{K+L-1+2\ell_c^{12}+1}$ be a path connecting $\rect{2}{1}{\ell_c^{12}}{\ell_c^{12}}$ and $\dect{2}{1}{\ell_c^{12} - 1}{\ell_c^{12}}$, which is the saddle configuration of the Ising model between \bf1 and \bf2. 
        \item Let $\beta_0$ be a length 2 path from $\dect{2}{1}{\ell_c^{12} - 1}{\ell_c^{12}}$ to $\rect{2}{1}{\ell_c^{12} - 1}{\ell_c^{12}}$. For $1 \le i \le 2\ell_c^{12}$, let $\beta_i$ be a path defined the same as above, connecting $\rect{2}{1}{\ell_c^{12}}{\ell_c^{12}}$ and $\rect{2}{1}{0}{0}$, which is \bf1.
    \end{itemize}
    Let $\gamma$ be a path obtained by concatenating $\gamma_0, \ldots, \gamma_{K+L-1+2\ell_c^{12}+1}$, and $\beta$ be a path concatenating $\beta_0, \ldots, \beta_{2\ell_c^{12}}$. Since the energy fluctuates along the path $\gamma \cup \beta$, computing the transition rate along the path does not yield a desirable lower bound. Nevertheless, we may remedy this by considering a cycle-path and then computing the probability of the process following the cycle-path. Namely, for each state in $\gamma \cup \beta$, consider a maximal cycle disjoint from \bf3. This gives a cycle-path from $C(\bf2)$ to $C(\bf1)$. Here, $C(\sigma)$ denotes the maximal cycle disjoint from \bf3 and containing $\sigma$.
    We claim that the energy increases along the cycle-path of $\gamma$, and that energy decreases along the cycle-path of $\beta$.

    First, we prove the claim on each path $\gamma_k$. Denote the path as $\gamma_k = (\eta_0, \ldots, \eta_n)$. If $k = 0$, the energy of the state strictly increases, so we are done. Thus, we may assume $k > 0$. Then $\eta_0 \in \rect{2}{1}{n}{m+1}$ and $\eta_n \in \rect{2}{1}{n}{m}$ for $n, m \ge \ell_c^{12}$.
    
    From $\eta_0$ to $\eta_{n-1}$, the energy increase by $h_2 - h_1$. To compare $\eta_{n-1}$ and $\eta_n$, recall the variant of Lemma \ref{lem:cycle} on \bf1 and \bf2 that $\eta_{n-1}$ is a state with minimal energy in $\partial C(\eta_n)$. Hence, the energy monotone increases along the cycle-path obtained by $(\eta_0, \ldots, \eta_n)$. Therefore, we have the claim for $\gamma$.

    We may prove the claim similarly on each path $\beta_k$. In this case, each $\beta_k$ is contained in a single cycle. These cycles are adjacent and the energy decreases along $\beta$.

    Finally, we compute the probability of the process following the cycle-path constructed above. Let $(C_1, \ldots ,C_a)$ be a uphill cycle-path from $C_1 = C(\bf2)$ to $C_a = C(\dect{2}{1}{{\ell_c^{12}-1}}{\ell_c^{12}})$, and $(D_1, \ldots, D_b)$ be a downhill cycle-path from $D_1 = C_a$ to $D_b = C(\bf1)$. Note that this cycle-path is disjoint from \bf3. We have
    \begin{align*}
        \PP_{\bf2}[\tau_{\bf1} < \tau_{\bf3}] \ge \PP_2^\beta[\tau_{\partial C_0} = \tau_{C_1}] \prod_{i = 1}^{a-1} \sup_{\sigma \in C_i}\PP_\sigma^\beta[\tau_{\partial C_i} = \tau_{C_{i+1}}] \prod_{j=1}^{b-1} \sup_{\sigma \in D_i} \PP_\sigma^\beta[\tau_{\partial D_i} = \tau_{D_{i+1}}]. 
    \end{align*}
    From Proposition \ref{prop:exit}, for any $\ee > 0$, for all $\beta$ sufficiently large,
    \[
        \sup_{\sigma C_i} \PP_\sigma^\beta[\tau_{\partial C_i} = \tau_{C_{i+1}}] \ge e^{-\beta[H(\FF(\partial C_{i+1})) - H(\FF(\partial C_i))+\ee]_+}.
    \]
    Therefore, we have
    \begin{align*}
        \PP_{\bf2}^\beta[\tau_{\bf1} < \tau_{\bf3}] & \ge e^{-\beta[H(\dect{2}{1}{\ell_c^{13}+1}{\ell_c^{13}}) - H(C(\bf2))+\ee]} \\
        & = e^{-\beta[(H(\bf1) + \Gamma^\star_{12}) - (H(\bf2) + \Gamma^\star_{23})+\ee]}.
    \end{align*}
    Assuming $\Gamma^\star_{12} > \Gamma^\star_{23}$, we have
    \[
        \mu(2)\PP_2^\beta[\tau_1 < \tau_3] \gg \mu(1),
    \]
    so the left-hand-side probability majorizes the summation \eqref{eq:EK13}. Hence,
    \[
        \limb e^{-\beta\Gamma^\star_{13}}\EE_1^\beta[\tau_3] = \infty.
    \]
\end{proof}

\appendix
\section{Appendix}
\subsection{Reference path of the Ising model}
\label{app:a1}

Recall the Ising model on a lattice graph $\Lambda = (V, E)$ defined by a Hamiltonian function
\[
    H(\sigma) = -J \sum_{(x,y) \in E} \one_{\{\sigma(x) = \sigma(y)\}} - h \sum_{x \in V} \one_{\{\sigma(x) = 1\}}
\]
where $\sigma \in \{-1, 1\}^V$ is spin configuration on $\Lambda$ and $J, h > 0$. For simplicity, we put $J = 1$. The model has a unique metastable state $\bf{-1}$, the configuration with constant spin $-1$, and a unique stable state $\bf{+1}$, the configuration with constant spin $1$.

We describe the well-known \emph{reference path}, which is the energy minimizing path from $\bf{-1}$ to $\bf{+1}$. While it was originally introduced in \cite{neves1991critical}, it has become a well-established concept and is explained in various literature. As an example, one can find a detailed explanation in \cite[Section 17]{bovier2015metastability}.
\begin{itemize}[leftmargin=*]
    \item Starting from $\bf{-1}$, flip a single $-1$ spin to make a 1-cluster, a connected component of 1-spin, of size $1 \times 1$.
    \item Having an 1-cluster of size $i \times i$ or $i \times (i+1)$ in a sea of $-1$-spins, flip a $-1$-spin adjacent to an edge of length $i$. This process increases the energy by $2 - h$. Accordingly, flip the rest of the $-1$-spins into $1$-spins along the edge. We end up having an 1-cluster of size $i \times (i+1)$ or $(i+1) \times (i+1)$, respectively. Note that each step decreases the energy by $-h$.
    \item Repeat the previous step until the 1-cluster grows up to the whole lattice.
\end{itemize}
Along the reference path, the Hamiltonian function has a unique maximum when adding a spin to a 1-cluster of size $(\ell_c-1) \times \ell_c$. This configuration is called the \emph{critical droplet}. Moreover, the direct computation gives a energy barrier from $\bf{-1}$ to $\bf{+1}$, written by
\[
    f(h) \coloneqq 4\ell_c - h(\ell_c(\ell_c - 1) + 1), \quad \ell_c = \left\lceil \frac{2}{h} \right\rceil.
\]

\subsection{Auxiliary function}
\label{app:a2}
In this section, we provide a comparison between several quantities appearing throughout the paper. Let $f(h)$ be a function on $h$ defined by the above equation. Recall from \eqref{eq:crit} the quantities 
\[
    \Gamma_{ij}^\star = f(h_j - h_i),\quad \Phi_{ij}^\star = f(h_j - h_i) + H(\bf i)
\]
which were used to represent the energy barrier and communication height of our model.
\begin{lemma}
    A function $f(h)$ is continuous and strictly decreasing on $h \in (0, \infty)$. Moreover, $5 < f(h) < \frac{8}{h}$ for $h \in (0, 1)$.
\end{lemma}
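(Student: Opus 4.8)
The plan is to exploit the explicit piecewise-affine structure of $f$. Since $\ell_c = \lceil 2/h \rceil$ equals the integer $n$ precisely on $I_n \coloneqq [2/n,\, 2/(n-1))$ (with $I_1 = [2,\infty)$), on each $I_n$ the function $f$ is the affine map $h \mapsto 4n - h\big(n(n-1)+1\big)$, which is continuous with slope $-(n(n-1)+1) \le -1 < 0$; hence $f$ is continuous and strictly decreasing on the interior of every $I_n$. To promote this to global continuity and monotonicity on $(0,\infty)$ I would check the breakpoints $h = 2/n$: from the left $\ell_c = n+1$, so $f \to 4(n+1) - \tfrac{2}{n}\big((n+1)n+1\big) = 2n+2-\tfrac2n$, while at $h = 2/n$ itself $\ell_c = n$ and $f = 4n - \tfrac2n\big(n(n-1)+1\big) = 2n+2-\tfrac2n$; the one-sided limits agree, so $f$ is continuous everywhere, and a function that is continuous on $(0,\infty)$ and strictly decreasing on each interval of a partition into intervals is strictly decreasing on all of $(0,\infty)$.

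For the lower bound I would first note that $h \in (0,1)$ is equivalent to $2/h > 2$, i.e.\ to $\ell_c \ge 3$. Then, using that $f$ is strictly decreasing and continuous together with the computation $f(1) = 4\cdot 2 - 1\cdot(2\cdot 1 + 1) = 5$ (consistent with $\lim_{h\to1^-}f(h) = 4\cdot 3 - 1\cdot 7 = 5$), one gets $f(h) > f(1) = 5$ for every $h \in (0,1)$.

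For the upper bound the cleanest route is to show $h\,f(h) < 8$. Set $u \coloneqq h\ell_c$; from $2/h \le \ell_c < 2/h + 1$ we have $2 \le u < 2+h$, and also $h(\ell_c - 1) = u - h \in [0, 2)$. Multiplying $f(h) = 4\ell_c - h\ell_c(\ell_c - 1) - h$ through by $h$ and substituting yields
\[
    h\,f(h) \;=\; 4u - u(u-h) - h^2 \;=\; -u^2 + 4u + h(u-h).
\]
Since $-t^2 + 4t \le 4$ for all $t$ and since $0 \le h(u-h) < 1\cdot 2 = 2$ (as $h < 1$ and $u - h < 2$), this gives $h\,f(h) < 6 < 8$, hence $f(h) < 8/h$ (in fact $f(h) < 6/h$).

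I do not expect a genuine obstacle here: the only place that demands care is the bookkeeping at the discontinuities of the ceiling function — confirming that $f$ glues continuously across the breakpoints $2/n$ so that piecewise monotonicity becomes global monotonicity — and keeping the algebra straight in the substitution $u = h\ell_c$; the estimates themselves are elementary.
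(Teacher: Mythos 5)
Your proof is correct, and for the continuity, strict monotonicity, and the lower bound it coincides with the paper's argument: $f$ is affine with negative slope on each interval $[2/n,2/(n-1))$, the one-sided values at the breakpoints $h=2/n$ agree (both equal $2n+2-\tfrac2n$), and then $f(h)>f(1)=5$ for $h\in(0,1)$. Where you genuinely diverge is the upper bound. The paper settles it with the one-line chain $f(h)=4m-h(m(m-1)+1)<4m<\tfrac{8}{h}$, which only works if $m$ is read as $\ell_c-1$: for $m=\ell_c=\lceil 2/h\rceil$ one has $4m\ge 8/h$, so the last inequality would fail, and the intended estimate is rather $f(h)<4(\ell_c-1)<8/h$, using $h\bigl(\ell_c(\ell_c-1)+1\bigr)>4$ when $\ell_c\ge 3$. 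Your route instead multiplies by $h$, substitutes $u=h\ell_c\in[2,2+h)$, and uses the identity $hf(h)=-u^2+4u+h(u-h)$ to get $hf(h)<4+2=6$, hence the sharper bound $f(h)<6/h<8/h$. Both arguments are elementary, but yours avoids the interval-by-interval bookkeeping for this step, is unambiguous also at the points where $2/h$ is an integer, and even improves the constant; the paper's version is shorter but, as printed, needs the reinterpretation of $m$ just described.
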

\begin{proof}
    Let $h \in [\frac{2}{m+1}, \frac{2}{m})$ for $m \in \ZZ^+$. Then $\ell_c = m+1$ and we have
    \[
        f(h) = 4(m+1) - h(m(m+1)+1),
    \]
    which is a continuous decreasing function on $[\frac{2}{m+1}, \frac{2}{m})$. Also, note that
    \[
        \lim_{h \to \frac{2}{m}+} f(h) = 4m - \frac{2}{m}\left(m(m-1)+1\right) = 2m + 2 - \frac2m
    \]
    and
    \[
        \lim_{h\to \frac{2}{m}-} f(h) = 4(m+1) - \frac{2}{m}\left(m(m+1)+1\right) = 2m + 2 - \frac2m.
    \]
    Therefore, $f(h)$ is continuous and strictly decreasing on $(0, \infty)$.

    Finally, we have that
    \[
        f(h) > f(1) = 5, \quad f(h) = 4m - h(m(m-1)+1) < 4m < \frac{8}{h}
    \]
    for $h \in (0, 1)$. This proves the lemma.
\end{proof}

The lemma directly implies that $\Gamma_{13}^\star < \Gamma_{23}^\star$ and $\Gamma_{13}^\star < \Gamma_{12}^\star$. Moreover,
\[
    \Gamma_{13}^\star = f(h_3 - h_1) > 5
\]
by the assumption $0 < h_1 < h_2 < h_3 < 1$ in Section \ref{sec:model}.

Finally, we compare the Hamiltonian between states of our interest: three monochromatic configurations \bf1, \bf2, and \bf3, and saddle configurations between each two of these.
\begin{lemma}
\label{lem:app2}
The following inequality holds.
\[
    \Phi_{12}^\star > \Phi_{13}^\star > H(\bf 1) > \Phi_{23}^\star > H(\bf2) > H(\bf3).
\]
\end{lemma}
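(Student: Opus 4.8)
The plan is to unwind each of the six inequalities into an elementary comparison, using only the identity $H(\bf i) = -|E| - |V| h_i$ for the monochromatic configurations, the definition $\Phi_{ij}^\star = f(h_j - h_i) + H(\bf i)$, and the two properties of $f$ established just above: $f$ is strictly decreasing on $(0,\infty)$ and $5 < f(h) < 8/h$ for $h \in (0,1)$.

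First I would dispose of the five ``easy'' inequalities. The rightmost pair $H(\bf1) > H(\bf2) > H(\bf3)$ follows at once from $H(\bf i) - H(\bf j) = |V|(h_j - h_i)$ together with $0 < h_1 < h_2 < h_3$. Next, $\Phi_{13}^\star - H(\bf1) = f(h_3 - h_1) > 0$ and $\Phi_{23}^\star - H(\bf2) = f(h_3 - h_2) > 0$ give $\Phi_{13}^\star > H(\bf1)$ and $\Phi_{23}^\star > H(\bf2)$, since $f$ is positive. Finally, $\Phi_{12}^\star - \Phi_{13}^\star = f(h_2 - h_1) - f(h_3 - h_1) > 0$ because $h_2 - h_1 < h_3 - h_1$ and $f$ is strictly decreasing. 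This leaves exactly one inequality with any content, namely $H(\bf1) > \Phi_{23}^\star$.

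The main step is therefore to prove $H(\bf1) > \Phi_{23}^\star$, which rewrites as $H(\bf1) - H(\bf2) > \Gamma_{23}^\star$, that is, $|V|(h_2 - h_1) > f(h_3 - h_2)$. Here I would invoke Assumption A: since $K$ and $L$ each exceed both $\tfrac{3}{h_2 - h_1}$ and $\tfrac{3}{h_3 - h_2}$, we get
\[
    |V|(h_2 - h_1) = KL(h_2 - h_1) > \frac{3}{h_2 - h_1}\cdot\frac{3}{h_3 - h_2}\cdot(h_2 - h_1) = \frac{9}{h_3 - h_2}.
\]
Combining this with the bound $f(h_3 - h_2) < \tfrac{8}{h_3 - h_2}$ from the previous lemma, which applies since $h_3 - h_2 \in (0,1)$, yields $|V|(h_2 - h_1) > \tfrac{9}{h_3 - h_2} > \tfrac{8}{h_3 - h_2} > f(h_3 - h_2)$, as needed.

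I do not expect a genuine obstacle: the only point requiring care is that the constant $9$ obtained by multiplying the two thresholds in Assumption A must beat the constant $8$ in the estimate $f(h) < 8/h$, which is precisely why Assumption A is stated with the threshold $3$ rather than something smaller. Everything else is bookkeeping with the explicit formula for $H$ on monochromatic states.
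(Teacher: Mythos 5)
Your proof is correct and follows essentially the same route as the paper: the five peripheral inequalities come from monotonicity and positivity of $f$ together with $H(\mathbf{i}) = -|E| - |V|h_i$, and the one substantive inequality $H(\mathbf{1}) > \Phi_{23}^\star$ is handled exactly as in the paper by combining Assumption A with the bound $f(h) < 8/h$. The only cosmetic difference is that you make the comparison $9 > 8$ explicit where the paper chains the bounds without isolating those constants.
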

We also remark that the lemma shows that the six states of Figure \ref{fig:energy} are depicted in the correct order of energy level.
\begin{proof}
    Recall that $K \times L$ is the size of the lattice graph the model lives. We have assumed in Assumption A that $K, L > \frac{3}{h_3-h_2}, \, \frac{3}{h_2-h_1}$.

    The first three quantities can be compared using the previous lemma. Indeed, $h_2 - h_1 < h_3 - h_1 < 1$ implies
    \[
        \Gamma_{12}^\star = f(h_2 - h_1) > \Gamma_{13}^\star = f(h_3 - h_1) > 0.
    \]
    Adding $H(\bf1)$ to each side gives the desired inequality.

    Next, we show $H(\bf1) > \Phi_{23}^\star$. This is equivalent to showing
    \[
        (h_2 - h_1)KL > \Gamma_{23}^\star = f(h_3 - h_2).
    \]
    By Assumption A, we have
    \[
        (h_2 - h_1)KL > (h_2 - h_1) \cdot \frac{3}{h_2-h_1} \cdot \frac{3}{h_3 - h_2} > f(h_3 - h_2)
    \]
    where the last inequality follows from the previous lemma. Hence, we have $H(\bf1) > \Phi_{23}^\star$.

    Finally, $\Phi_{23}^\star > H(\bf2)$ follows from $\Gamma_{23}^\star > 0$, and clearly $H(\bf2) > H(\bf3)$. This completes the proof.
\end{proof}

\paragraph{Acknowledgement}
This research is supported by the National Research Foundation of Korea (NRF) grant funded by the Korea government (MSIT) (No. 2023R1A2C100517311). The author thanks Insuk Seo and Seonwoo Kim for introducing the problem and offering enlightening insights. 

\bibliographystyle{abbrv}
\bibliography{reference}

\end{document}